\documentclass[12pt]{amsart}




\usepackage[all]{xy}
\input xy
\xyoption{all}
\CompileMatrices


\setlength{\oddsidemargin}{5mm} 
\setlength{\evensidemargin}{5mm} 
\setlength{\topmargin}{0mm} 
\setlength{\topskip}{10mm} 
\setlength{\headsep}{5mm} 
\setlength{\headheight}{10mm} 
\setlength{\textwidth}{150mm} 
\setlength{\textheight}{220mm} 


\usepackage{amsmath}
\usepackage[mathscr]{eucal}
\usepackage{amssymb}
\usepackage{latexsym}
\usepackage{amsthm}
\usepackage{amscd}
\usepackage{txfonts}

\setcounter{tocdepth}{1}


\theoremstyle{plain}

\newtheorem{theorem}{Theorem}[section]
\newtheorem{proposition}[theorem]{Proposition}
\newtheorem{corollary}[theorem]{Corollary}
\newtheorem{lemma}[theorem]{Lemma}

\newtheorem{question}[theorem]{Question}

\newtheorem{main}{{\bf Theorem}}

\newtheorem{chu-i}{{\bf Remark}}

\theoremstyle{definition}

\newtheorem{definition}[theorem]{Definition}
\newtheorem{example}[theorem]{Example}
\newtheorem{remark}[theorem]{Remark}


\newcommand{\reflem}[1]{Lemma~{\rm \ref{#1}}}

\newcommand{\refthm}[1]{Theorem~{\rm \ref{#1}}}
\newcommand{\refcor}[1]{Corollary~{\rm \ref{#1}}}

\newcommand{\refsec}[1]{Section~{\rm \ref{#1}}}
\newcommand{\refsubsec}[1]{Subsection~{\rm \ref{#1}}}

\newcommand{\ol}[1]{\overline{#1}}


\newcommand{\AdG}{A_d(G)}
\newcommand{\AdGres}{A'_d(G)}
\newcommand{\XdG}{X_d(G)}

\newcommand{\SdG}{S_d(G)}

\newcommand{\SdGres}{S'_d(G)}

\newcommand{\AdR}{A_d(R)}

\newcommand{\XdR}{X_d(R)}
\newcommand{\SdR}{S_d(R)}
\newcommand{\SdRd}{\SdR[1/\bd]}

\newcommand{\TdG}{T_d(G)}
\newcommand{\TdGres}{T'_d(G)}
\newcommand{\TdGd}{T_d(G)[1/\bold{d}]}
\newcommand{\TdGresd}{T'_d(G)[1/\bold{d}]}
\newcommand{\TdR}{T_d(R)}
\newcommand{\bT}{\mathbb{T}}
\newcommand{\bTdR}{\mathbb{T}_d(R)}
\newcommand{\bTdG}{\mathbb{T}_d(G)}
\newcommand{\bTdGres}{\mathbb{T}'_d(G)}

\newcommand{\TdRd}{T_d(R)[1/\bold{d}]}

\newcommand{\ab}{\mathrm{ab}}
\newcommand{\red}{\mathrm{red}}
\newcommand{\univ}{\mathrm{univ}}

\newcommand{\Irr}{\mathrm{Irr}}

\newcommand{\Rep}{\mathrm{Rep}}

\newcommand{\fp}{\mathfrak{p}}

\newcommand{\bk}{\bold{k}}

\newcommand{\bI}{\bold{I}}
\newcommand{\bJ}{\bold{J}}

\newcommand{\bA}{\mathbb{A}}
\newcommand{\bd}{\bold{d}}
\newcommand{\br}{\bold{r}}

\newcommand{\bH}{\mathbb{H}}
\newcommand{\bQ}{\mathbb{Q}}
\newcommand{\clQ}{\overline{\mathbb{Q}}}
\newcommand{\bZ}{\mathbb{Z}}

\newcommand{\bP}{\mathbb{P}}
\newcommand{\bC}{\mathbb{C}}

\newcommand{\bFp}{\mathbb{F}_p}

\newcommand{\cX}{\mathcal{X}}

\newcommand{\cO}{\mathscr{O}}

\newcommand{\rM}{\mathrm{M}}
\newcommand{\GL}{\mathrm{GL}}
\newcommand{\SL}{\mathrm{SL}}

\newcommand{\Hom}{\mathrm{Hom}\,}
\newcommand{\End}{\mathrm{End}\,}

\newcommand{\Gal}{\mathrm{Gal}\,}

\newcommand{\Spec}{\mathrm{Spec}\,}
\newcommand{\Proj}{\mathrm{Proj}\,}

\newcommand{\Tr}{\mathrm{Tr}\,}
\newcommand{\N}{\mathrm{N}\,}

\newcommand{\Isom}{\mathrm{Isom}}
\newcommand{\InvKM}{\mathrm{Inv}K_M}

\newcommand{\Cf}{\textrm{cf.}\;}

\newcommand{\sea}{\searrow}

\newcommand{\da}{\downarrow}

\newcommand{\lr}{\longrightarrow}

\newcommand{\hr}{\hookrightarrow}
\newcommand{\ra}{\rightarrow}

\newcommand{\lan}{\langle}
\newcommand{\ran}{\rangle}

\newcommand{\fg}{\pi_1}

\newcommand{\isom}{\tilde{\ra}}

\renewcommand{\Im}{{\rm Im}}
\renewcommand{\Re}{{\rm Re}}


\newcommand{\bFpn}{\mathbb{F}_{p^n}}
\newcommand{\bFpm}{\mathbb{F}_{p^m}}
\newcommand{\bFpr}{\mathbb{F}_{p^r}}

\newcommand{\ga}{\alpha}
\newcommand{\gb}{\beta}

\newcommand{\cF}{\mathcal{F}}
\newcommand{\caR}{\mathcal{R}}




\newcommand{\PGL}{\mathrm{PGL}}
\newcommand{\PSL}{\mathrm{PSL}}

\newcommand{\Vol}{\mathrm{Vol}\,}


\usepackage[dvips]{graphicx}

%
%
%
%
%

\title{Hasse-Weil zeta functions of
 $\SL_2$-character varieties
 of closed orientable hyperbolic $3$-manifolds}
\author{Shinya Harada}

\subjclass[2010]{Primary~57M27, Secondary~11R52, 11R42}

\keywords{Hyperbolic 3-manifolds; Arithmetic hyperbolic 3-manifolds; $\SL_2(\bC)$-Character varieties; Azumaya algebras; Hasse-Weil zeta functions; (Invariant) Trace fields; Hyperbolic volumes.}

\setcounter{section}{-1}

\setcounter{tocdepth}{2}

\begin{document}

\pagestyle{myheadings}
\pagenumbering{arabic}

\maketitle

\markboth{Shinya Harada}{Hasse-Weil zeta functions of closed hyperbolic 3-manifolds}

\begin{abstract}
 It is proved that
 the Hasse-Weil zeta functions of
 the canonical components of the
 $\SL_2$ ($\PSL_2$)-character varieties of
 closed orientable complete hyperbolic $3$-manifolds
 of finite volume are equal to
 the Dedekind zeta functions of their trace fields
 (invariant trace fields).
%
%
 When the closed $3$-manifold is arithmetic,
 the special value at $s=2$
 of the Hasse-Weil zeta function
 of the canonical component of the $\PSL_2$-character
 variety is expressed in terms of
 the hyperbolic volume of the manifold
 up to rational numbers.
\end{abstract}


\section{Introduction}

 For an orientable hyperbolic $3$-manifold $M$ of finite volume
 the $\SL_2(\bC)$-character variety $X(M)(\bC)$ of $M$
 is the set of the characters of
 the representations of the fundamental group
 $\fg(M)$ into $\SL_2(\bC)$.
 It is known that $X(M)(\bC)$
 is an affine algebraic set over $\bQ$,
 that is, it is the set of the common zeros of
 a finite number of polynomials with rational coefficients.
 Culler and Shalen have shown its importance
 in the study of $3$-manifolds in \cite{CS}
 by constructing essential surfaces in the manifolds
 attached to the ideal points of the character varieties.

 In this paper
 we establish the following results
 (the notation and the precise statements
 are explained below):

\begin{itemize}
\item 
 The Hasse-Weil zeta function of the canonical
 component of the $\SL_2$ ($\PSL_2$)-character variety
 of a closed orientable hyperbolic $3$-manifold of finite volume
 is equal to the Dedekind zeta function of the trace field
 (invariant trace field).
\item 
 The special value at $s=2$ of 
 the Hasse-Weil zeta function of the canonical
 component of the $\PSL_2$-character variety
 of an arithmetic closed orientable hyperbolic $3$-manifold of finite volume
 is expressed in terms of the hyperbolic volume
 of the closed $3$-manifold up to a rational number.
\end{itemize}

 Despite the importance of
 the $\SL_2(\bC)$ ($\PSL_2(\bC)$)-character
 variety of a $3$-manifold,
 the algebro-geometric structure of
 the character variety is not known well
 and it does not seem to have simple structure.
 For instance,
 in general the dimension of the $\SL_2(\bC)$-character
 variety does not behave nicely.
 Even if we consider the $\SL_2(\bC)$-character variety of
 a hyperbolic knot complement in the $3$-sphere
 it may have an irreducible component with arbitrary large dimension
 (\Cf \cite{PaoluzziPorti}).
 If $M$ is a complete hyperbolic $3$-manifold
 with $n$ cusps,
 its complete hyperbolic structure is determined by
 the holonomy representation $\rho_M : \fg(M) \ra \PSL_2(\bC)$
 (it is discrete, faithful, especially irreducible representation).
 By taking a lift of $\rho_M$ to $\SL_2(\bC)$
 there is an irreducible component
 (we call it a canonical component) of
 the character variety $X(M)(\bC)$ containing the character
 corresponding to the lift.
 It would contain the geometric information on
 the hyperbolic structure.
 In fact, it is proved by Thurston that
 the dimension of the canonical component $X(M)(\bC)_0$
 of $X(M)(\bC)$
 is equal to the number of cusps of $M$.
 To retrieve further algebro-geometric
 properties of the canonical components
 of the character varieties of
 the hyperbolic $3$-manifolds
 we will precisely determine them arithmetically
 and investigate their zeta functions.

 For that purpose
 in what follows
 we will use the following terminology:
 For an orientable hyperbolic $3$-manifold $M$ of finite volume
 let $X(M)$ be the $\SL_2$-character variety of $M$,
 namely it is a unique affine reduced scheme
 of finite type over $\bQ$
 such that the set of its $\bC$-rational points
 is the $\SL_2(\bC)$-character variety $X(M)(\bC)$ of $M$.
 In other words, there exist polynomials $f_1,\cdots,f_r$
 in $\bQ[T_1,\cdots,T_m]$ satisfying
$$
 X(M) = \Spec\left(A:=\bQ[T_1,\cdots,T_m]/(f_1,\cdots,f_r)\right)
$$
 such that the set $X(M)(\bC)$ of $\bC$-rational points
$$
 X(M)(\bC) = \Hom(A,\bC) = \left\{(a_1,\cdots,a_m)\in\bC^m\mid f_i(a_1,\cdots,a_m) = 0 \text{ for any }1\le i \le r\right\}
$$
 is the $\SL_2(\bC)$-character variety of $M$
 in the usual sense in Topology.
 We denote by $X_0(M)$ (resp. $X(M)(\bC)_0$) an irreducible component of $X(M)$ (resp. $X(M)(\bC)$)
 containing the character corresponding to a lift of
 $\rho_M$.

%
%

%

%
 To determine the structure of the affine scheme $X_0(M)$
 and to define an action of $H^1(\fg(M),C_2)$ on $X_0(M)$,
 we will introduce another scheme $\cX_0(M)$,
 which is considered as a model of $X_0(M)$
 for a closed hyperbolic $3$-manifold $M$.

 Let $\cX(M)$ be the moduli scheme of absolutely irreducible
 representations of the group ring $\bZ[\fg(M)]$ into Azumaya algebras
 (whose images are contained in norm $1$ subgroups)
 with degree $2$
 studied by Procesi.
 When $M$ is a complete hyperbolic $3$-manifold
 we denote by $\cX_0(M)$ an irreducible component
 of $\cX(M)$ containing the image of the rational point
 corresponding to a lift of $\rho_M$
 (we call $\cX_0(M)$ a canonical component of $\cX(M)$
 as well as the $X(M)$-case).
 In this paper
 we prove the following
 (for the definition of the zeta function
 see \refsec{section:Hasse-Weil zeta}):

\begin{main}[\refthm{Thm:HWzetaProcesiSch}]
 Let $M$ be a closed orientable complete hyperbolic $3$-manifold
 of finite volume.
 Then the Hasse-Weil zeta function $\zeta(\cX_0(M),s)$
 is equal to the Dedekind zeta function
 $\zeta(K_M,s)$ of
 the trace field $K_M$
 up to rational functions in $p^{-s}$ for
 finitely many prime numbers $p$.
\end{main}

%
%
%

\noindent 
 Here, the trace field $K_M$ of $M$ is
 the number field of finite degree over $\bQ$
 generated by the traces of a lift of the holonomy
 representation $\rho_M$
 (which does not depend on the choice of
 a lift of $\rho_M$).
 As a corollary
 we obtain the following:

\begin{main}[\refcor{Cor:HWzetaCharVar}]
 Let $M$ be a closed orientable complete hyperbolic $3$-manifold
 of finite volume.
 Then
 the reduced scheme
 $X_0(M)$ is unique as a closed subscheme of $X(M)$,
 which does not depend on the choice
 of a lift of the holonomy representation
 $\rho_M : \fg(M) \ra \PSL_2(\bC)$,
 and $X_0(M)$ is isomorphic to
 the spectrum $\Spec K_M$ of
 the trace field $K_M$.
 Therefore
 the Hasse-Weil zeta function $\zeta(X_0(M),s)$
 is equal to the Dedekind zeta function
 $\zeta(K_M,s)$.
\end{main}

\noindent 
%
 Since we could determine $X_0(M)$
 it also is possible to determine
 the Hasse-Weil zeta function of $\PSL_2$-character variety
 of a closed hyperbolic $3$-manifold
 (in the sense of Heusener and Porti) as follows.

\begin{main}[\refthm{thm:HWzetaPSLCharVar}]
 Let $M$ be an orientable closed hyperbolic
 $3$-manifold of finite volume.
 Let $C_2:=\left\{\pm 1 \right\}$ be the finite group
 of order $2$ and $H^1(\fg(M),C_2):=\Hom(\fg(M),C_2)$.
 Then $H^1(\fg(M),C_2)$ acts on $X_0(M)$ and
 the quotient scheme
 $\ol{X}_0(M):= X_0(M)/H^1(\fg(M),C_2)$
 is isomorphic to the spectrum $\Spec(\InvKM)$
 of the invariant trace field $\InvKM$.
 Therefore the Hasse-Weil zeta function
 $\zeta(\ol{X}_0(M),s)$ is
 equal to the Dedekind zeta function
 $\zeta(\InvKM,s)$ of the invariant trace field
 $\InvKM$ of $M$.
\end{main}

\noindent 
 Here the invariant trace field $\InvKM$ is a
 subfield of the trace field $K_M$
 generated by the traces of the squares of
 the image of the holonomy representation.

 There is a one-to-one correspondence
 between the set of
 conjugacy classes of the lifts of the holonomy
 representation $\rho_M : \fg(M) \ra \PSL_2(\bC)$
 and the cohomology group $H^1(\fg(M),C_2)=\Hom(\fg(M),C_2)$.
 We can show (\reflem{lem:KMInvKM})
 that the cardinality of
 $H^1(\fg(M),C_2)$ is equal to
 $[K_M : \InvKM]$.
 Thus we deduce the following:

\begin{main}[\refcor{cor:NumberOFCanonicalComp}]
 Let $M$ be a closed oriented complete hyperbolic $3$-manifold
 of finite volume.
 Then the number of canonical components
 $X(M)(\bC)_0$
 of the $\SL_2(\bC)$-character variety $X(M)(\bC)$
 is equal to
 $[K_M : \InvKM] = \# H^1(\fg(M),C_2)$.
\end{main}

%
%

 For arithmetic $3$-manifolds
 it is well-known as Borel's formula
 (see \refthm{thm:BorelFormula})
 that
 the hyperbolic volumes are expressed,
 especially
 in terms of the special values at $2$
 of the Dedekind zeta functions of
 the invariant trace fields.
 Hence we obtain the following corollary.

%
%
%
%
%

\begin{main}[\refcor{cor:SpecialValueHWPSL}]

 Let $M$ be an arithmetic orientable closed hyperbolic
 $3$-manifold.
 Then the special value
 $\zeta(\ol{X}_0(M),2)$
 is expressed in terms of
 the hyperbolic volume $\Vol(M)$ and $\pi$ as follows:
$$
 \zeta(\ol{X}_0(M),2) \sim_{\bQ^{\times}} \dfrac{(4\pi^2)^{[\InvKM:\bQ]-1}\Vol(M)}{|\Delta_{\InvKM}|^{3/2}},
$$
 where $\sim_{\bQ^{\times}}$ means the equality holds
 up to a rational number.
\end{main}
%

%
%
%

%
%
%

\noindent 
 For hyperbolic knot complements
 it is confirmed by some examples and is conjectured 
 that the $A$-polynomials of hyperbolic knots
 would relate with the hyperbolic volumes
 by considering their Mahler measures
 (\Cf \cite{BoydMahlerInv}, \cite{MahlerMeasureDilogII}).
 Unlike the closed $3$-manifold case,
 the canonical component of the $\PSL_2(\bC)$-character
 variety of any hyperbolic twist knot complement
 in the $3$-sphere $S^3$
 is isomorphic to the projective line $\bP^1_{\bC}$
 (\cite{MPL}).
 Thus it seems that we cannot expect a similar result
 on a relation between the hyperbolic volume
 and the special value of the zeta function
 of the $\PSL_2$-character variety
 of a $1$-cusped hyperbolic $3$-manifold.

 The author would like to express his gratitude
 to Yuichiro Taguchi
 for telling him about \reflem{lem:AzumayaTraceField}
 and the possibility of \refcor{cor:FromCtoQ},
 which improved the statements of
 \refthm{Thm:HWzetaProcesiSch} and \refcor{Cor:HWzetaCharVar}.
 He is grateful to Yuji Terashima
 for his helpful comments,
 which especially led the author to \refthm{thm:HWzetaPSLCharVar}.
 Finally he is indebted to Masanori Morishita
 for his sincere encouragement and comments
 throughout this work,
 which enabled the author to complete it.
 
 This work was supported by
 JSPS KAKENHI Grant Numbers 13J01342, 24224002, 16K17564.


\section{Hasse-Weil zeta function}
\label{section:Hasse-Weil zeta}
\subsection{Hasse-Weil zeta function of a scheme}

 Here we review some basic facts on
 the Hasse-Weil zeta functions of schemes over $\bZ$.
 For details see \cite{SerreZetaAndLft}.

 In this subsection
 $X$ is a scheme of finite type over $\bZ$.
 The dimension of $X$ is the maximal length of
 a chain of closed irreducible subschemes of $X$
$$
 X_0 \subset X_1 \subset \cdots \subset X_{n-1} \subset X,\quad X_i \neq X_{i+1}.
$$
 Let $\ol{X}$ be the set of closed points of $X$
 and $N(x) = \#(k(x))$,
 where $k(x)$ is the residue field at $x \in \ol{X}$.
 A point $x \in X$ is a closed point if and only if
 its residue field $k(x)$ is a finite field.

\begin{lemma}
 There are only finitely many closed points of $X$
 which have the same isomorphic residue field.
\end{lemma}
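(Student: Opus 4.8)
The plan is to reduce to an affine scheme of finite type over $\bZ$ and then to count ring homomorphisms into a fixed finite field. First I would rephrase the statement: since a finite field is determined up to isomorphism by its cardinality, ``having the same isomorphic residue field'' means ``having residue field of a fixed cardinality $q = p^f$'' for some prime $p$ and some $f \ge 1$. Recall also (as noted just above the statement) that the closed points of $X$ are exactly the points whose residue field is finite. So it suffices to fix such a $q$ and show that only finitely many closed points $x$ satisfy $\#\,k(x) = q$.

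Next I would reduce to the affine case. Because $X$ is of finite type over $\bZ$ it is quasi-compact, hence admits a finite open cover by affines $U_i = \Spec A_i$ with each $A_i$ a finitely generated $\bZ$-algebra. Any closed point $x$ of $X$ lies in some $U_i$; since $\{x\}$ is closed in $X$ and $U_i$ is open, $\{x\} = \{x\} \cap U_i$ is closed in $U_i$, and the residue field $k(x)$ is unchanged. Thus $x$ is a closed point of $U_i$ with $\#\,k(x)=q$, and it is enough to bound, for each of the finitely many $i$, the number of closed points of $U_i$ with residue field of cardinality $q$, then sum.

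So I would treat $X = \Spec A$ with $A = \bZ[T_1,\dots,T_m]/I$. A closed point with residue field of cardinality $q$ is a maximal ideal $\m$ such that $A/\m \cong \bFq$; choosing such an isomorphism produces a ring homomorphism $\varphi\colon A \to \bFq$ with $\Ker \varphi = \m$. Hence the map sending a homomorphism to its kernel carries the set $\Hom(A,\bFq)$ \emph{onto} the set of the closed points in question. But any ring homomorphism $\varphi\colon A \to \bFq$ is completely determined by the tuple $(\varphi(T_1),\dots,\varphi(T_m)) \in \bFq^{\,m}$, so $\#\,\Hom(A,\bFq) \le q^m$. Therefore the number of maximal ideals of $A$ with residue field of cardinality $q$ is finite (at most $q^m$), which completes the reduction and hence the proof.

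The argument is elementary once the reduction to finitely generated $\bZ$-algebras is made; there is no serious obstacle. The two points requiring a little care are that a closed point stays closed, with the same residue field, in an affine chart containing it, and that ``isomorphic residue field'' genuinely pins down a single cardinality $q$. The entire finiteness ultimately rests on the finiteness of $\bFq^{\,m}$.
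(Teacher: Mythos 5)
Your proof is correct, and it takes a genuinely different route from the paper's. The paper reduces to $X=\Spec(\bFp[X_1,\cdots,X_r])$ and invokes Zariski's lemma to write every maximal ideal in terms of a triangular system of generators $f_1,\cdots,f_r$ with $f_i$ irreducible in $\bFp[X_1,\cdots,X_i]/(f_1,\cdots,f_{i-1})$, and then counts the finitely many such tuples of bounded degree. You instead fix the cardinality $q=\#\,k(x)$ (which, as you note, determines the finite residue field up to isomorphism), pass to an affine chart $\Spec A$ with $A=\bZ[T_1,\cdots,T_m]/I$, and observe that every maximal ideal with residue field $\bFq$ is the kernel of some element of $\Hom(A,\bFq)$, a set of cardinality at most $q^m$ because a homomorphism is determined by the images of the generators $T_i$. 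Your argument is more elementary (no Zariski's lemma, no structure theory of maximal ideals) and yields an explicit bound $q^m$; the paper's argument gives finer information about what the maximal ideals actually look like, which is sometimes useful but is not needed here. One mild point worth keeping in mind: an arbitrary homomorphism $A\to\bFq$ need not be surjective, so its kernel may have residue field a proper subfield of $\bFq$; this is harmless for your purposes, since you only need that the map from $\Hom(A,\bFq)$ onto the set of closed points with residue field of cardinality exactly $q$ is surjective, and a finite set surjects only onto finite sets.
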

\begin{proof}
 Since $X$ is of finite type over $\bZ$,
 we can reduce to the case where $X$ is an affine scheme
 of finite type over $\bZ$.
 Hence it is enough to consider the case $X = \Spec(\bFp[X_1,\cdots,X_r])$.
%
%
%
 It follows from Zariski's lemma that
 any maximal ideal of $\bFp[X_1,\cdots,X_r]$
 is generated by the elements $f_1,\cdots,f_r$
 such that
 $f_i$ is in $\bFp[X_1,\cdots,X_i]$
 and $f_i$ is irreducible in
 the quotient ring $\bFp[X_1,\cdots,X_i]/(f_1,\cdots,f_{i-1})$ for any $1 \le i \le r$.
 Then we see that
 there are finitely many maximal ideals
 in $\bFp[X_1,\cdots,X_r]$
 with the same residue field
 since there are only finitely many possibilities
 of the tuples of polynomials $(f_1,\cdots,f_r)$
 with given degree.
\end{proof}
\noindent 
 Therefore
 the set $X(\bFpn) := \Hom(\Spec(\bFpn),X)$
 of $\bFpn$-rational points of $X$ is a finite set
 for any prime power $p^n$.
 The Hasse-Weil zeta function $\zeta(X,s)$ of $X$ is defined by
$$
 \zeta(X,s) := \prod_{x \in \ol{X}}\dfrac{1}{1 - N(x)^{-s}}.
$$
\noindent 
 The function $\zeta(X,s)$ converges absolutely on $\Re(s)>\dim X$.
 Note that
 there is another expression of $\zeta(X,s)$ as follows.

\begin{lemma}
$$
 \zeta(X,s)= \prod_{p:\text{prime}} Z(X, p, p^{-s}),
$$
 where
$$
 Z(X,p,T) =
 \exp\left(\sum_{n = 1}^\infty
\dfrac{\#X(\bFpn)}{n} T^n \right).
$$
\end{lemma}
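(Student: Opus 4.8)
The plan is to expand both sides as products of local factors indexed by primes $p$ and then match the Euler factor at each $p$. First I would partition the set $\ol{X}$ of closed points according to residue characteristic: every closed point $x$ has a finite residue field $k(x)$, so $N(x) = p^{\deg x}$ where $p = \mathrm{char}\,k(x)$ and $\deg x := [k(x):\bFp]$. Grouping the defining product of $\zeta(X,s)$ by this characteristic gives
$$
 \zeta(X,s) = \prod_{p} \prod_{\substack{x \in \ol{X} \\ \mathrm{char}\,k(x) = p}} \frac{1}{1 - N(x)^{-s}},
$$
so it suffices to prove, for each fixed $p$, that the inner product equals $Z(X,p,p^{-s})$.

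The key input is a count of $\bFpn$-rational points. A morphism $\Spec(\bFpn) \to X$ has image a single closed point $x$ of characteristic $p$ and is the same datum as a field homomorphism $k(x) \to \bFpn$; conversely any such homomorphism determines a morphism. Since a homomorphism $\bF_{p^d} \to \bFpn$ exists precisely when $d \mid n$ and, because $\bF_{p^d}/\bFp$ is separable, there are then exactly $d$ of them (a torsor under $\Gal(\bF_{p^d}/\bFp)$), writing $a_d$ for the number of closed points of characteristic $p$ with $\deg x = d$ I would obtain
$$
 \#X(\bFpn) = \sum_{d \mid n} d\, a_d.
$$

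With this in hand the identity becomes a formal power-series manipulation. Setting $T = p^{-s}$ and taking logarithms of the inner product,
$$
 -\sum_{\substack{x \\ \mathrm{char} = p}} \log\bigl(1 - T^{\deg x}\bigr) = \sum_{d \ge 1} a_d \sum_{m \ge 1} \frac{T^{dm}}{m} = \sum_{n \ge 1} \Bigl(\sum_{d \mid n} \frac{d\, a_d}{n}\Bigr) T^n = \sum_{n \ge 1} \frac{\#X(\bFpn)}{n} T^n,
$$
where the middle step collects the terms with $dm = n$. Exponentiating gives exactly $Z(X,p,p^{-s})$, and taking the product over all $p$ finishes the proof.

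I would expect the only genuine subtlety to be the point-counting identity: one must check that an $\bFpn$-point of $X$ is exactly a closed point together with an embedding of its residue field into $\bFpn$, and that the number of such embeddings is $\deg x$ when $\deg x \mid n$ and zero otherwise. This rests on the finiteness of residue fields (established in the preceding lemma) and the separability of finite-field extensions; everything after that is bookkeeping, valid as an identity of formal power series and hence, on $\Re(s) > \dim X$, of convergent Euler products.
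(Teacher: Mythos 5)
Your proposal is correct and follows essentially the same route as the paper: group the closed points by residue characteristic, use the identity $\#X(\bFpn) = \sum_{d \mid n} d\,a_d$, and rearrange the resulting double sum (the paper does this via $\exp\bigl(\sum_n T^n/n\bigr) = (1-T)^{-1}$ rather than by taking logarithms, but the computation is identical). Your justification of the point-counting identity via embeddings of residue fields is a detail the paper leaves implicit, and it is accurate.
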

\begin{proof}
 By the definition
 we see that $\#X(\bFpn) = \sum_{1 \le r, r \mid n} r a_r$,
 where
 $a_r$ is the number of closed points $x \in \ol{X}$
 whose residue fields are isomorphic to $\bFpr$.
 Note that
$$
\exp\left(\sum_{n = 1}^\infty\dfrac{1}{n} T^n \right) = (1 - T)^{-1}.
$$
 Hence we have
$$
\zeta(X,s) = \prod_p \prod_{n=1}^{\infty} (1 - p^{-ns})^{- a_n}
 = \prod_p \prod_{n=1}^{\infty}  \exp\left(\sum_{r = 1}^\infty\dfrac{a_n}{r} (p^{-ns})^r \right).
$$
 Therefore we have
\begin{align*}
\prod_p \prod_{n=1}^{\infty}  \exp\left(\sum_{r = 1}^\infty\dfrac{a_n}{r} (p^{-ns})^r \right)
 &= \prod_p \exp\left(\sum_{n=1}^{\infty}\sum_{r = 1}^\infty\dfrac{a_n}{r} (p^{-ns})^r \right)\\
 &= \prod_p \exp\left(\sum_{m=1}^{\infty}\sum_{1 \le\; n \mid m}\dfrac{n a_n}{m} p^{-ms} \right) \quad (\text{ put }m = n r)\\
 &= \prod_p \exp\left(\sum_{m=1}^{\infty}\dfrac{\#X(\bFpm)}{m} p^{-ms} \right).
\end{align*}
\end{proof}
\noindent 
 Let $X_{\red}$ be the reduced scheme of $X$.
 Since $X_{\red}(\bFpn) \;\isom\; X(\bFpn)$,
 we have $\zeta(X_{\red},s) = \zeta(X,s)$.


\begin{example}
 Let
$$
\zeta(K,s) = \prod_{0\neq \fp \subset \cO_K}(1 - N(\fp)^{-s})^{-1}
$$
 be the Dedekind zeta function of
 a number field $K/\bQ$.
 Here we denote by $\cO_K$ the ring of integers of $K$,
 $\fp$ is a non-zero prime ideal of $K$
 and $N(\fp) = \#(\cO_K/\fp)$.
\begin{itemize}
\item(Dedekind zeta function)
$$
 \zeta(\Spec(\cO_K),s) = \zeta(K,s).
$$
 Especially $\zeta(\Spec(\bZ), s) = \zeta(s)$
 is the Riemann zeta function.
\item(Affine space, Projective space)
 For the affine space $\bA^n_{\bZ}=\Spec \bZ[T_1,\cdots,T_n]$
 and the projective space $\bP^n_{\bZ}=\Proj \bZ[T_0,\cdots,T_n]$
\begin{align*}
 \zeta(\bA^n_{\bZ}, s) &= \zeta(s-n).\\
 \zeta(\bP^n_{\bZ},s)  &= \zeta(s-n)\zeta(s-(n-1))\cdots\zeta(s).
\end{align*}
\end{itemize}
\end{example}

\begin{proposition}
 Let $K$ be a finite number field
 and $\cO_K$ the ring of integers of $K$.
 Let $\cO \subset \cO_K$ be an order of $K$.
 Then $\zeta(\Spec(\cO),s)$ is equal to $\zeta(\Spec(\cO_K),s)$
 up to rational functions in $p^{-s}$
 for finitely many prime numbers $p$.
\end{proposition}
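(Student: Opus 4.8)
The plan is to exploit the Euler-product decomposition $\zeta(\Spec(\cO),s) = \prod_p Z(\Spec(\cO),p,p^{-s})$ established in the lemma above and to compare the two zeta functions prime by prime. The key observation is that $\Spec(\cO)$ and $\Spec(\cO_K)$ are one-dimensional arithmetic schemes whose closed points over a fixed rational prime $p$ are finite in number, so each local factor is automatically a rational function in $p^{-s}$; the whole content of the statement is then that these local factors coincide for all but finitely many $p$.

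First I would record the rationality of each local factor. For a fixed prime $p$ there are only finitely many closed points of $\Spec(\cO)$ lying over $p$ (their number is bounded by $\dim_{\bF_p}(\cO\otimes\bF_p) = [K:\bQ]$), and each has a finite residue field $\bF_{p^f}$. Hence, in the notation of the Euler-product lemma, only finitely many of the integers $a_n$ are nonzero for this $p$, and $Z(\Spec(\cO),p,T) = \prod_{n\ge 1}(1-T^{n})^{-a_n}$ is a finite product, so it is a rational function in $T=p^{-s}$. The same applies verbatim to $\Spec(\cO_K)$.

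Next comes the key local comparison. Since $\cO$ is an order, it is a subring of $\cO_K$ of finite index $n := [\cO_K:\cO]$, and the quotient $\cO_K/\cO$ is a finite abelian group annihilated by $n$. Consequently, for every prime $p \nmid n$ the localizations agree, $\cO\otimes\bZ_{(p)} = \cO_K\otimes\bZ_{(p)}$, and reducing modulo $p$ yields an isomorphism of finite $\bF_p$-algebras $\cO\otimes\bF_p \isom \cO_K\otimes\bF_p$. Since every ring homomorphism $\cO\to\bFpn$ factors through $\cO\otimes\bF_p$, the point sets $(\Spec\cO)(\bFpn)$ and $(\Spec\cO_K)(\bFpn)$ have the same cardinality for all $n$, whence $Z(\Spec(\cO),p,T) = Z(\Spec(\cO_K),p,T)$ for these $p$.

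Finally I would assemble the pieces: dividing the two Euler products, all factors at primes $p\nmid n$ cancel, leaving
$$
 \frac{\zeta(\Spec(\cO),s)}{\zeta(\Spec(\cO_K),s)} = \prod_{p \mid n} \frac{Z(\Spec(\cO),p,p^{-s})}{Z(\Spec(\cO_K),p,p^{-s})},
$$
a finite product of rational functions in $p^{-s}$, which is exactly the assertion. The only step requiring genuine care is the local comparison in the previous paragraph: one must verify that the equality of localizations at a good prime really forces the point counts to agree, equivalently that the finite set of primes at which $\cO$ and $\cO_K$ differ is precisely the set of primes dividing the index (the same primes that divide the conductor $\mathfrak{c} = \{x\in\cO_K : x\cO_K \subseteq \cO\}$). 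This is standard commutative algebra, so I do not expect a serious obstacle, and the remaining assertions are formal consequences of the Euler-product lemma.
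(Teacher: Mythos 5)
Your proof is correct and follows essentially the same route as the paper: both compare the Euler factors prime by prime and use that an order $\cO$ agrees with $\cO_K$ at every prime not dividing the index $[\cO_K:\cO]$, so that only the finitely many bad local factors (each a rational function in $p^{-s}$) can differ. The only cosmetic difference is that the paper cites the bijection between primes of $\cO$ and of $\cO_K$ away from the index (Stevenhagen), whereas you derive the same fact directly from the equality of localizations $\cO\otimes\bZ_{(p)}=\cO_K\otimes\bZ_{(p)}$ and compare point counts instead of closed points.
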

\begin{proof}
 Since $\cO$ is an order of $K$,
 it is of finite index in $\cO_K$.
 Note that
 there are bijective correspondence between
 the prime ideals of $\cO$ and those of $\cO_K$
 lying on prime numbers $p \not\mid [\cO_K : \cO]$
 (\Cf \cite{Stevenhagen}, Example 3.2).
 Hence $Z(\Spec(\cO),p,T)$ is equal to $Z(\Spec(\cO_K),p,T)$
 for any $p \not\mid [\cO_K : \cO]$.
 Therefore $\zeta(\Spec(\cO),s)$ is equal to
 $\zeta(\Spec(\cO_K),s)$ up to rational functions in $p^{-s}$
 for $p \mid [\cO_K : \cO]$.
\end{proof}

\subsection{Hasse-Weil zeta function of the character variety}
\label{subsec:HWzetaCharVar}


%
%
%
%

 Since $\SL_2(\bC)$-character variety $X(M)(\bC)$
 is an affine algebraic set over $\bQ$,
 there is a unique reduced affine scheme $X(M)$
 of finite type over $\bQ$
 such that the set of its $\bC$-rational points is isomorphic to $X(M)(\bC)$.
 We will call $X(M)$ the $\SL_2$-character variety of $M$.
 (For the existence of such scheme, see for instance \cite{Liu}, Lemma 3.2.6.)

 Now we define the Hasse-Weil zeta function
 of the $\SL_2$-character variety $X(M)$
 of a hyperbolic 3-manifold $M$.
 It is defined by the Hasse-Weil zeta function
 in the preceding subsection
 in terms of a model of $X(M)$.

 Since $X(M)$ is an affine algebraic set over $\bQ$
 there exist polynomials $f_1,\cdots,f_r$ in $\bQ[T_1,\cdots,T_m]$
 such that $X(M)$ is written as
$$
 X(M) = \Spec \bQ[T_1,\cdots,T_m]/(f_1,\cdots,f_r).
$$
\noindent
 By multiplying the above polynomials by some positive
 integer
 we can replace $f_1,\cdots,f_r$ by
 polynomials $f'_1,\cdots,f'_r$ in $\bZ[T_1,\cdots,T_m]$.
 Let $X$ be the scheme defined by $f'_1,\cdots,f'_r$:
$$
 X = \Spec \bZ[T_1,\cdots,T_r]/(f'_1,\cdots,f'_r).
$$
 Then define $\zeta(X(M),s)$ by $\zeta(X,s)$.

\begin{proposition}
 The function $\zeta(X(M),s)$ is well-defined
 up to rational functions in $p^{-s}$ for
 finitely many prime numbers $p$.
\end{proposition}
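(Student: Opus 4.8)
The plan is to reduce the claim to a comparison of two integral models with isomorphic generic fibres, and then to spread out the generic isomorphism over $\Spec\bZ$ away from finitely many primes. First I would make the choices explicit. With the reduced $\bQ$-scheme $X(M)$ fixed, a choice consists of an isomorphism $X(M)\cong\Spec\bQ[T_1,\dots,T_m]/(f_1,\dots,f_r)$ together with positive integers $c_i$ used to clear denominators, giving $f'_i=c_i f_i$ and the model $X=\Spec\bZ[T_1,\dots,T_m]/(f'_1,\dots,f'_r)$. Since the $c_i$ are units in $\bQ$, we have $(f'_1,\dots,f'_r)\bQ[T]=(f_1,\dots,f_r)$, so the generic fibre $X\times_{\Spec\bZ}\Spec\bQ$ is canonically $X(M)$. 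Let $X,X'$ be two such models, with coordinate rings $A,A'$ finitely presented over $\bZ$; then $A\otimes_\bZ\bQ\cong A'\otimes_\bZ\bQ$, both being the coordinate ring of $X(M)$. Because $\zeta(X,s)=\prod_p Z(X,p,p^{-s})$ and $\zeta(X_{\red},s)=\zeta(X,s)$, it suffices to find a single integer $N$ with $Z(X,p,T)=Z(X',p,T)$ for all primes $p\nmid N$.

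Next I would spread out the generic isomorphism. Writing $\bQ=\varinjlim_N\bZ[1/N]$ (colimit over positive integers ordered by divisibility) and using that $A,A'$ are finitely presented over $\bZ$, the standard limit formalism (EGA IV, 8.8.2) gives a bijection
$$
\varinjlim_N\Hom_{\bZ[1/N]}(A[1/N],A'[1/N])\;\isom\;\Hom_{\bQ}(A\otimes\bQ,A'\otimes\bQ).
$$
Hence the fixed generic isomorphism and its inverse each descend to a $\bZ[1/N]$-algebra homomorphism for some $N$; enlarging $N$ so that, by the injectivity encoded in this bijection, the two composites coincide with the respective identity maps, I obtain an isomorphism of $\bZ[1/N]$-schemes
$$
X\times_{\Spec\bZ}\Spec\bZ[1/N]\;\isom\;X'\times_{\Spec\bZ}\Spec\bZ[1/N].
$$

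Finally I would read off the Euler factors. For every prime $p\nmid N$, base change along $\bZ[1/N]\to\bFp$ gives $X\times_{\Spec\bZ}\Spec\bFp\;\isom\;X'\times_{\Spec\bZ}\Spec\bFp$, so $\#X(\bFpn)=\#X'(\bFpn)$ for all $n$ and therefore $Z(X,p,T)=Z(X',p,T)$ by the defining formula for $Z(X,p,T)$. Thus $\zeta(X,s)$ and $\zeta(X',s)$ agree at every Euler factor outside the finite set of primes dividing $N$, and at each such prime both $Z(X,p,p^{-s})$ and $Z(X',p,p^{-s})$ are rational functions in $p^{-s}$ by the rationality of the zeta function of a finite-type $\bFp$-scheme; their ratio is then again a rational function in $p^{-s}$. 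This is exactly the asserted well-definedness. I expect the spreading-out step to be the only delicate point: one must argue that the purely generic $\bQ$-isomorphism of coordinate rings actually extends to an isomorphism of the two $\bZ$-models after inverting only finitely many primes, whereas the point counting and the rationality at the bad primes are then formal.
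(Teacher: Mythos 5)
Your proposal is correct and follows essentially the same strategy as the paper: both reduce the claim to showing that two integral models of $X(M)$ become isomorphic after inverting finitely many primes, and then compare Euler factors prime by prime. The only difference is in execution --- the paper carries out the descent by hand (choosing polynomial representatives of the images of the generators under the generic isomorphism and clearing the finitely many denominators that appear), whereas you package the same step as an application of the spreading-out formalism of EGA IV 8.8.2; you also make explicit, via Dwork rationality, why the Euler factors at the bad primes are rational functions of $p^{-s}$, a point the paper leaves implicit.
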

\begin{proof}
 Given a system of polynomials $f_1,\cdots,f_r$
 in $\bQ[T_1,\cdots,T_m]$,
 let $N,M$ be positive integers which annihilate
 the denominators of the polynomials.
 Then it is obvious that
 the systems $(N f_i)$ and $(M f_i)$ have
 the same zero set in $\bFpn$ for
 any prime $p \nmid NM$ and $n \ge 1$.
 Hence the zeta functions defined by them
 are identical up to local factors for $p \nmid NM$.

%
%

 Take two systems of defining polynomials for $X(M)$,
 namely
$$
 X(M) = \Spec \bQ[T_1,\cdots,T_m]/(f_1,\cdots,f_r) \;\isom\;
 \Spec \bQ[U_1,\cdots,U_n]/(g_1,\cdots,g_s),
$$
 where we can assume that
 $f_1,\cdots,f_r$ and $g_1,\cdots,g_s$ are
 integer coefficients.
 Now we have an isomorphism of $\bQ$-algebras
$$
 \bQ[T_1,\cdots,T_m]/(f_1,\cdots,f_r) \;\isom\; \bQ[U_1,\cdots,U_n]/(g_1,\cdots,g_s).
$$
 Let $\tilde{T}_i \in \bQ[U_1,\cdots,U_n]$ (resp. $\tilde{U}_j \in \bQ[T_1,\cdots,T_m]$)
 be a representative of the image of $T_i$ (resp. $U_j$)
 by the above isomorphism.
 If $\tilde{\tilde{T}}_i \in \bQ[T_1,\cdots,T_m]$
 (resp. $\tilde{\tilde{U}}_j \in \bQ[U_1,\cdots,U_n]$)
 is the element obtained by substituting $\tilde{U}_j$
 (resp. $\tilde{T}_i$) into $U_j$ (resp. $T_i$),
 we have
$$
 \tilde{\tilde{T}}_i \in T_i + (f_1,\cdots,f_r)_{\bQ[T_1,\cdots,T_m]},\quad
 \tilde{\tilde{U}}_j \in U_j + (g_1,\cdots,g_s)_{\bQ[U_1,\cdots,U_n]}.
$$
 Hence there is a positive integer $N > 0$ such that
$$
 N \tilde{\tilde{T}}_i \in N T_i + (f_1,\cdots,f_r)_{\bZ[T_1,\cdots,T_m]},\quad
 N \tilde{\tilde{U}}_j \in N U_j + (g_1,\cdots,g_s)_{\bZ[U_1,\cdots,U_n]}.
$$
%
%
\noindent 
 Let $\tilde{f}_i$ (resp. $\tilde{g}_j$) be
 the element obtained
 from $f_i$ (resp. $g_j$)
 by substituting $\tilde{T}_i$ (resp. $\tilde{U}_j$)
 into $T_i$ (resp. $U_j$).
 Then we have a matrix presentation
$$
 (\tilde{f}_1,\cdots,\tilde{f}_r) = (g_1,\cdots,g_s)A,\quad
 (\tilde{g}_1,\cdots,\tilde{g}_s) = (f_1,\cdots,f_r)B
$$
 for $A \in \rM_{s,r}(\bQ[U_1,\cdots,U_n])$
 (resp. $B \in \rM_{r,s}(\bQ[T_1,\cdots,T_m])$).
 Let $M > 0$ be a positive integer of the l.c.m.
 of all the denominators of the coefficients
 of the elements in the above matrix presentations.

 Now we see that,
 if $p$ is a prime number not dividing $NM$,
 then the above isomorphism induces an isomorphism
$$
 \bFp[T_1,\cdots,T_m]/(f_1,\cdots,f_r) \;\isom\; \bFp[U_1,\cdots,U_n]/(g_1,\cdots,g_s)
$$
 which sends $T_i$ to $\tilde{T}_i$ and
 $U_j$ to $\tilde{U}_j$ respectively.
 This implies that the local zeta function
 $Z(f_1,\cdots,f_r,p,T)$ and $Z(g_1,\cdots,g_s,p,T)$
 are equal for any $p \nmid NM$.
 Therefore we have proved the proposition.
\end{proof}
%
%
%


\section{Review of Moduli theory of Procesi}


\subsection{Universal representation ring and scheme}
\label{Universal representation ring}

 The universal representation ring $A_d(R)$
 of a (non-commutative) ring $R$
 is a commutative ring
 which parametrizes all the representations of $R$
 with degree $d$ over commutative rings.
 Here we review its construction
 for an arbitrary associative ring (\Cf \cite{Procesi}, \S$1$).

 For any (non-commutative) ring $R$
 it is written as
 $R = \bZ\lan x_k \mid k \in S \ran / \bI$,
 where $\bZ\lan x_k \mid k \in S \ran$
 is the non-commutative polynomial ring
 of indeterminant $x_k$ with index set $S$
 and $\bI$ a two-sided ideal of
 $\bZ\lan x_k \mid k \in S \ran$.
 Let
 $\bZ[X^k_{ij}\mid 1\le i,j \le d, k \in S]$
 be the (commutative) polynomial ring
 over $\bZ$
 (we write $\bZ[X^k_{ij}]$ instead of
 $\bZ[X^k_{ij}\mid 1\le i,j \le d,  k \in S]$
 for short).
 Then we have the following canonical ring homomorphism
$$
\begin{matrix}
\rho : & \bZ\lan x_k \mid k \in S \ran & \lr & \rM_d(\bZ[X^k_{ij}]) \\
       &    x_k             & \mapsto & (X^k_{ij})_{ij}
\end{matrix}.
$$
 Let $\bJ$ be the two-sided ideal of $\rM_d(\bZ[X^k_{ij}])$
 generated by $\rho(\bI)$.
 Then the ideal $\bJ$ is written as $\rM_d(J)$,
 where $J$ is an ideal of $\bZ[X^k_{ij}]$ defined by
$$
 J := \left\{ a \in \bZ[X^k_{ij}] \mid a \text{ is an entry of some } M \in \bJ \right\}.
$$
 Thus the above homomorphism induces
 the following commutative diagram:
%
$$
\xymatrix{
\bZ\lan x_k \mid k \in S \ran \ar[d]\ar[r]^{\rho} & \rM_d(\bZ[X^k_{ij}])\ar[d]\\
R   \ar[r]_{\rho_{d,R}} & \rM_d(\AdR),
}
$$
 where $\AdR$ is the quotient ring
 $\bZ[X^k_{ij}]/J$.
 We call $\rho_{d,R}$
 {\it the universal representation} of $R$
 with degree $d$
 and $\AdR$ {\it the universal representation ring} of $R$
 with degree $d$.

\begin{proposition}
\label{prop:RepresentabilityGLcase}
 The covariant functor
$$
\begin{matrix}
 \caR : & (\mathrm{Comm. Rings}) & \lr & (\mathrm{Sets})\\
              &  A & \longmapsto & \Hom(R,\rM_d(A))
\end{matrix}
$$
 from the category of commutative rings
 into the category of sets
 is represented by $\AdR$,
 that is,
 we have $\Hom(R,\rM_d(A)) \;\isom\; \Hom(\AdR,A)$
 for any commutative ring $A$.
\end{proposition}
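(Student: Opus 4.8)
The plan is to establish representability in two stages: first for the free ring $\bZ\lan x_k \mid k \in S \ran$, and then to descend along the presentation $R = \bZ\lan x_k \mid k \in S \ran/\bI$. First I would treat the free case. A ring homomorphism $\psi \colon \bZ\lan x_k \mid k \in S \ran \to \rM_d(A)$ is determined by, and imposes no constraint on, the images $\psi(x_k) \in \rM_d(A)$; recording the $d^2$ entries of each matrix $\psi(x_k)$ sets up a bijection
$$
\Hom(\bZ\lan x_k \mid k \in S \ran, \rM_d(A)) \;\isom\; \Hom(\bZ[X^k_{ij}], A),
$$
natural in $A$, under which $\psi$ corresponds to the homomorphism $\phi \colon \bZ[X^k_{ij}] \to A$ with $\phi(X^k_{ij}) = \psi(x_k)_{ij}$. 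Equivalently $\psi = \rM_d(\phi)\circ\rho$, where $\rM_d(\phi)$ denotes the entrywise application of $\phi$. Thus $\bZ[X^k_{ij}]$ represents $A \mapsto \Hom(\bZ\lan x_k\ran, \rM_d(A))$, with universal element $\rho$.

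Next I would cut the free case down to $R$. By the universal property of the quotient, $\psi$ factors (uniquely) through $R = \bZ\lan x_k\ran/\bI$ if and only if $\psi(\bI) = 0$, and likewise $\phi$ factors through $\AdR = \bZ[X^k_{ij}]/J$ if and only if $\phi(J) = 0$. So the whole proposition reduces to checking that, under the bijection just constructed,
$$
\psi(\bI) = 0 \quad\Longleftrightarrow\quad \phi(J) = 0 .
$$

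The heart of the argument is this equivalence, and the key input is the identity $\bJ = \rM_d(J)$ recorded in the construction. If $\phi(J) = 0$, then $\rM_d(\phi)$ annihilates $\rM_d(J) = \bJ$, hence annihilates $\rho(\bI) \subseteq \bJ$, so $\psi(\bI) = \rM_d(\phi)(\rho(\bI)) = 0$. Conversely, suppose $\psi(\bI) = \rM_d(\phi)(\rho(\bI)) = 0$. Every element of $\bJ$ is a finite sum of terms $P\, r\, Q$ with $P, Q \in \rM_d(\bZ[X^k_{ij}])$ and $r \in \rho(\bI)$; since $\rM_d(\phi)$ is a ring homomorphism and kills each such $r$, it kills all of $\bJ = \rM_d(J)$. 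As $\rM_d(\phi)$ acts entrywise, $\rM_d(\phi)(\rM_d(J)) = \rM_d(\phi(J))$, whence $\phi(J) = 0$ by reading off any entry. This yields the equivalence.

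The step I expect to be the only genuine obstacle is precisely this equivalence, and within it the reliance on $\bJ = \rM_d(J)$, i.e.\ the standard fact that a two-sided ideal of $\rM_d(B)$ is $\rM_d$ of its ideal of entries (proved by conjugating with matrix units $e_{ij}$); granting that, the computation is purely formal. Combining the free-case bijection with the equivalence produces a bijection $\Hom(R, \rM_d(A)) \isom \Hom(\AdR, A)$, and naturality in $A$ is inherited from the free case, giving the asserted natural isomorphism $\caR \isom \Hom(\AdR, -)$ with $\rho_{d,R}$ as the universal representation.
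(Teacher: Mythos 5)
Your proposal is correct and follows essentially the same route as the paper's proof: set up the entrywise bijection $\Hom(\bZ\lan x_k \mid k \in S\ran, \rM_d(A)) \isom \Hom(\bZ[X^k_{ij}],A)$ and check that factoring through $R$ corresponds to factoring through $\AdR$. The paper compresses the key equivalence $\psi(\bI)=0 \Leftrightarrow \phi(J)=0$ into ``by the definition of $\AdR$'' and ``it is easy to see''; you have simply supplied the details, correctly, using the identity $\bJ = \rM_d(J)$ already recorded in the construction.
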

\begin{proof}
 Let $\rho : R \ra \rM_d(A)$ be a representation of $R$
 into $\rM_d(A)$.
 Define a ring homomorphism $f : \bZ[X^k_{ij}] \ra A$
 by $f(X^k_{ij}) := (\rho(x_k))_{ij}$.
 Here $(\rho(x_k))_{ij}$ means the $(i,j)$-entry of
 the matrix $\rho(x_k)$.
 By the definition of $\AdR$,
 this induces a ring homomorphism $\ol{f} : \AdR \ra A$.
 It is easy to see that
 this correspondence induces a bijection
 between
 $\Hom(R,\rM_d(A))$ and $\Hom(\AdR,A)$.
\end{proof}

\noindent
 If $R$ is a  finitely generated (non-commutative) ring,
 it is clear by construction that
 the universal representation ring $\AdR$
 is a finitely generated $\bZ$-algebra.
 We call the spectrum $\XdR := \Spec(\AdR)$
 the {\it universal representation scheme} of $R$.
 If $R$ is the group ring $\bZ[G]$
 for a group $G$,
 we write $\AdG$ (resp. $\XdG$) for $\AdR$ (resp. $\XdR$).

 Let $\AdGres$ be the quotient ring of $\AdG$
 by the ideal generated by $\det(X^k_{ij}) - 1$
 for all $k \in S$
 and $\rho'_{d,G}$ the composite homomorphism
 of $\rho_{d,G}:=\rho_{d,\bZ[G]}$
 and the projection
 $\rM_d(\AdG) \ra \rM_d(\AdGres)$.
 Since $\Hom(\bZ[G],\rM_d(A))$ is identified with $\Hom(G,\GL_d(A))$,
 we also have the following:

\begin{proposition}
\label{prop:RepresentabilitySLcase}
 The covariant functor
$$
\begin{matrix}
 \caR' : & (\mathrm{Comm. Rings}) & \lr & (\mathrm{Sets})\\
              &  A & \longmapsto & \Hom(G,\SL_d(A))
\end{matrix}
$$
 from the category of commutative rings
 into the category of sets
 is represented by $\AdGres$,
 that is,
 we have $\Hom(G,\SL_d(A)) \isom \Hom(\AdGres,A)$
 for any commutative ring $A$.
\end{proposition}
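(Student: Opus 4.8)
The plan is to reduce everything to \refprop{prop:RepresentabilityGLcase} together with the universal property of a quotient ring, so that no new moduli-theoretic input is required. First I would recall from that proposition the explicit bijection: for a commutative ring $A$, a homomorphism $\rho : \bZ[G] \to \rM_d(A)$ corresponds to the ring homomorphism $\ol{f} : \AdG \to A$ determined by $\ol{f}(X^k_{ij}) = (\rho(x_k))_{ij}$, and under the identification $\Hom(\bZ[G],\rM_d(A)) = \Hom(G,\GL_d(A))$ this yields $\Hom(G,\GL_d(A)) \isom \Hom(\AdG,A)$. My goal is then to pin down which $\ol{f}$ correspond to representations landing in $\SL_d(A)$, and to recognise that these are exactly the ones factoring through $\AdGres$.

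The computational heart is the identity $\det(\rho(x_k)) = \ol{f}(\det(X^k_{ij}))$, valid because $\ol{f}$ is a ring homomorphism and $\det(X^k_{ij})$ is a polynomial in the entries $X^k_{ij}$. Hence $\det\rho(x_k) = 1$ in $A$ if and only if $\ol{f}(\det(X^k_{ij}) - 1) = 0$. Letting $k$ range over $S$, the map $\rho$ sends every generator $x_k$ into $\SL_d(A)$ precisely when $\ol{f}$ annihilates every element $\det(X^k_{ij}) - 1$, equivalently the ideal they generate. By the universal property of the quotient, this is equivalent to $\ol{f}$ factoring uniquely through $\AdGres = \AdG/(\det(X^k_{ij}) - 1 : k \in S)$. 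Thus $\Hom(\AdGres,A)$ is identified with the set of $\rho$ whose generators map into $\SL_d(A)$, and this identification is natural in $A$ because the one in \refprop{prop:RepresentabilityGLcase} is.

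Finally I would upgrade ``determinant $1$ on the generators $x_k$'' to ``$\rho(G) \subseteq \SL_d(A)$''. Since $\det : \GL_d(A) \to A^\times$ is a group homomorphism, the composite $\det\circ\rho : G \to A^\times$ is a homomorphism of groups, and a homomorphism is trivial as soon as it vanishes on a generating set; as the $x_k$ generate $G$, the conditions $\det\rho(x_k) = 1$ for all $k \in S$ force $\det\rho(g) = 1$ for every $g \in G$. The reverse inclusion is immediate. Combining the three steps gives the natural bijection $\Hom(G,\SL_d(A)) \isom \Hom(\AdGres,A)$.

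The step I expect to be the only real point requiring care is this last passage from generators to the whole group: it is exactly here that one uses both that the chosen $x_k$ form a generating set of $G$ and that the determinant is multiplicative, so that imposing the relations $\det(X^k_{ij}) = 1$ on the generators alone already carves out all $\SL_d$-valued representations. Everything else is formal, being a combination of \refprop{prop:RepresentabilityGLcase} with the universal property of quotients, from which naturality in $A$ is also inherited.
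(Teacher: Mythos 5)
Your argument is correct and is exactly the route the paper takes: reduce to \refprop{prop:RepresentabilityGLcase} via the identification $\Hom(\bZ[G],\rM_d(A)) = \Hom(G,\GL_d(A))$, and observe that $\Im(\rho)\subseteq\SL_d(A)$ if and only if the corresponding $f:\AdG\ra A$ kills the elements $\det(X^k_{ij})-1$, i.e.\ factors through $\AdGres$. The paper dismisses this last equivalence as obvious, whereas you supply the (correct) justification via $\det\rho(x_k)=f(\det(X^k_{ij}))$ and multiplicativity of the determinant on a generating set.
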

\begin{proof}
 For a representation $\rho: G \ra \GL_d(A)$,
 let $f : \AdG \ra A$ be the corresponding ring homomorphism
 associated with $\bZ[\rho]:\bZ[G] \ra \rM_d(A)$.
 It is obvious that $\Im(\rho)$ is contained in $\SL_d(A)$
 if and only if $f$ factors through $\AdGres$.
\end{proof}


\subsection{Moduli theory of Procesi}

 Let $R$ be a (non-commutative) associative ring.
 Here we briefly review the moduli theory of Procesi
 on absolutely irreducible representations
 of $R$ into Azumaya algebras.
 For details,
 refer to the original paper
 (in particular \S$1$, $2$) of Procesi (\cite{Procesi})
 or \S$1,2,3$ of \cite{ModuliGalTaguchi}
 where the theory is discussed
 in a more general setting.
 Once we assume the theory of Procesi
 its $\SL_n$-version (put restriction on the determinant) of the theory
 is immediately obtained (\refthm{thm:Procesi-SL}).
%
%
%
 First we collect some facts on Azumaya algebras.
 For the proofs, see for instance \cite{KO}.

\begin{definition}
 Let $A$ be a commutative ring.
 We say that
 an $A$-algebra $S$ is an {\it Azumaya algebra of degree $d$}
 if the following conditions are satisfied:

\begin{enumerate}
\item $S$ is a finitely generated projective $A$-module
 of rank $d^2$,
\item the natural homomorphism $S \otimes_{A} S^{\circ} \ra \End_A(S)$ given by $ s \otimes s' \longmapsto (t \mapsto s t s')$, is an isomorphism,
\end{enumerate}
 where $S^{\circ}$ is the opposite ring of $S$.
\end{definition}

\begin{example}
 The total matrix algebra $\rM_d(A)$
 is an Azumaya algebra of degree $d$ over $A$.
 If $A$ is a field,
 an Azumaya algebra over $A$ is just
 a central simple algebra over $A$.
 Here a {\it central simple algebra} $S$ over a field $A$
 is a finite dimensional $A$-algebra such that
 $S$ has no non-trivial two sided ideal and
 the center $C(S)$ is equal to $A$.
\end{example}

\noindent
 Here we list some basic properties of Azumaya algebras.

\begin{proposition}
 Let $S$ be a finitely generated $A$-module
 and $f : A \ra B$ a ring homomorphism.
\begin{enumerate}
\item 
 If $S$ is projective, then $S \otimes_A B$ is also
 projective.
 If $f$ is faithfully flat,
 then the converse is also true.
\item 
 If $S$ is an Azumaya algebra of degree $d$ over $A$,
 then $S \otimes_A B$ is also an Azumaya algebra
 of degree $d$ over $B$.
 If $f$ is faithfully flat,
 then the converse is also true.
\end{enumerate}
\end{proposition}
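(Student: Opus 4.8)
The plan is to establish (1) first and then deduce (2) from it, since the Azumaya condition is built on top of finite projectivity. For the forward direction of (1), I would use the characterization that a finitely generated projective module is a direct summand of a finite free module: write $A^n \cong S \oplus S'$ for some $A$-module $S'$. Applying the additive functor $- \otimes_A B$ gives $B^n \cong (S \otimes_A B) \oplus (S' \otimes_A B)$, so $S \otimes_A B$ is a direct summand of $B^n$ and hence projective over $B$.

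For the converse of (1), assume $f$ is faithfully flat and $S \otimes_A B$ is projective over $B$ (equivalently, finitely presented and flat). I would recover the finite projectivity of $S$ from the two properties that characterize it. First, flatness of $S$ over $A$ descends: for an injection $M \hr N$ of $A$-modules, the map $S \otimes_A M \to S \otimes_A N$ is identified, after the faithfully flat base change $-\otimes_A B$, with $(S\otimes_A B) \otimes_B (M \otimes_A B) \to (S\otimes_A B)\otimes_B (N \otimes_A B)$; this is injective because $M \otimes_A B \hr N \otimes_A B$ ($B$ flat) and $S \otimes_A B$ is flat over $B$, and faithful flatness then forces $S\otimes_A M \to S\otimes_A N$ to be injective. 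Second, finite presentation descends: choosing $A^n \thr S$ with kernel $K$ (finitely generated, as $S$ is), flatness of $B$ gives the exact sequence $0 \to K\otimes_A B \to B^n \to S \otimes_A B \to 0$, and finite presentation of $S \otimes_A B$ forces $K \otimes_A B$ to be finitely generated over $B$; finite generation then descends by faithful flatness (if $K' \subseteq K$ is generated by finitely many elements whose images generate $K\otimes_A B$ over $B$, then $(K/K')\otimes_A B = 0$, so $K = K'$). A finitely presented flat module being finitely generated projective, $S$ is projective.

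With (1) in hand, (2) reduces to base-changing the two defining conditions. Condition (i) follows from (1) together with the invariance of the rank under base change. For condition (ii), I would use the canonical isomorphisms $(S \otimes_A B)^{\circ} \cong S^{\circ} \otimes_A B$, $(S \otimes_A B) \otimes_B (S^{\circ}\otimes_A B) \cong (S \otimes_A S^{\circ}) \otimes_A B$, and $\End_A(S) \otimes_A B \cong \End_B(S \otimes_A B)$ — the last valid because $S$ is finitely generated projective, so $\End$ commutes with arbitrary base change. Under these identifications the base change of the structure map $S \otimes_A S^{\circ} \to \End_A(S)$ is exactly the structure map for $S \otimes_A B$. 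If $S$ is Azumaya the former is an isomorphism, hence so is the latter, proving $S \otimes_A B$ is Azumaya of degree $d$; conversely, when $f$ is faithfully flat and $S\otimes_A B$ is Azumaya, the latter map is an isomorphism, and since a homomorphism becoming an isomorphism after a faithfully flat base change is already an isomorphism, the structure map for $S$ is an isomorphism — so $S$ is Azumaya of degree $d$.

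The step I expect to carry the real weight is the converse of (1): the descent of flatness and of finite presentation are the only places where faithful flatness does genuine work, through the detection of injectivity and of vanishing after base change, whereas the forward directions and all of (2) are formal once the relevant base-change identities are available. It is also worth noting the order of dependence in the converse of (2): one must first invoke (1) to know that $S$ is finitely generated projective before the identity $\End_A(S)\otimes_A B \cong \End_B(S\otimes_A B)$ may be used to transport the isomorphism back to $A$.
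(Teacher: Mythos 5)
The paper does not actually prove this proposition: it is stated among ``some facts on Azumaya algebras'' with the proofs deferred wholesale to the reference [KO] (Knus--Ojanguren). So there is no in-paper argument to compare against; what you have written is a correct, self-contained proof of the standard descent statements that the paper takes as known. Your forward direction of (1) (summand of $A^n$ base-changes to a summand of $B^n$), your descent of flatness and of finite presentation via faithfully flat detection of injectivity and of vanishing, and your reduction of (2) to the base-change compatibilities $(S\otimes_A B)^{\circ}\cong S^{\circ}\otimes_A B$ and $\End_A(S)\otimes_A B\cong \End_B(S\otimes_A B)$ (the latter valid precisely because $S$ is finitely generated projective, a point you correctly flag as logically prior in the converse of (2)) are all sound and are essentially the arguments one finds in the cited source. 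Two small points of polish: the parenthetical ``with kernel $K$ (finitely generated, as $S$ is)'' reads as if you were asserting finite generation of $K$ up front, which is what the subsequent argument is meant to establish --- you presumably intend it to justify the existence of the surjection $A^n \thr S$, and should say so; and in the converse of (2) you should also note explicitly that the rank of $S$ is $d^2$, which follows because $\Spec B \ra \Spec A$ is surjective for a faithfully flat $f$, so the local rank of $S$ at any prime of $A$ is computed by the rank of $S\otimes_A B$ at a prime above it. Neither point is a gap in substance.
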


\begin{proposition}
 Let $S$ be a finitely generated $A$-module.
 Let $a_1,\cdots,a_r$ be elements of $A$ such that
 $A = (a_1,\cdots,a_r)_A$.
 Then the canonical ring homomorphism
 $A \ra \prod_i A[1/a_i]$ is a faithfully flat homomorphism.
 Thus $S$ is an Azumaya algebra of degree $d$ over $A$
 if and only if $S \otimes_A A[1/a_i]$ is an Azumaya algebra
 of degree $d$ over $A[1/a_i]$ for any $i$.
\end{proposition}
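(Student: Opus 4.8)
The plan is to prove the two assertions in sequence: first that $f : A \ra B := \prod_i A[1/a_i]$ is faithfully flat, and then to deduce the Azumaya criterion by feeding this into the preceding proposition. I would treat the faithful flatness as the substantive input and the Azumaya statement as a formal consequence.

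For flatness I would note that each localization $A[1/a_i]$ is a flat $A$-module, and since the index set is finite the product $\prod_i A[1/a_i]$ agrees with the direct sum $\bigoplus_i A[1/a_i]$, which is again flat as a finite direct sum of flat modules. For faithfulness it suffices, $B$ being flat, to check that the induced map $\Spec(B) \ra \Spec(A)$ is surjective. Here $\Spec(B)$ is the disjoint union of the $\Spec(A[1/a_i])$, and the image of $\Spec(A[1/a_i])$ in $\Spec(A)$ is the basic open set $D(a_i) = \{\fp \mid a_i \notin \fp\}$. The hypothesis $A = (a_1,\cdots,a_r)_A$ says that no prime ideal of $A$ contains all of $a_1,\cdots,a_r$, so $\bigcup_i D(a_i) = \Spec(A)$; hence the map on spectra is surjective and $f$ is faithfully flat.

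For the second assertion I would first observe that, the product being finite, tensoring commutes with it, so $S \otimes_A B \;\isom\; \prod_i (S \otimes_A A[1/a_i])$. By the preceding proposition applied to the faithfully flat homomorphism $f$, the module $S$ is an Azumaya algebra of degree $d$ over $A$ if and only if $S \otimes_A B$ is an Azumaya algebra of degree $d$ over $B$. It then remains to see that $\prod_i (S \otimes_A A[1/a_i])$ is Azumaya of degree $d$ over $\prod_i A[1/a_i]$ exactly when each factor $S \otimes_A A[1/a_i]$ is Azumaya of degree $d$ over $A[1/a_i]$. This holds because both defining conditions—being projective of rank $d^2$, and bijectivity of the natural map $T \otimes_B T^{\circ} \ra \End_B(T)$—are compatible with the decomposition of a module or algebra over a finite product ring $\prod_i B_i$ into its components, since $\Spec(\prod_i B_i)$ is the disjoint union of the $\Spec(B_i)$ and these properties may be verified on each factor.

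The only point requiring any care is this last componentwise verification over the product ring; the remaining ingredients are the flatness of localizations, the spectral description of faithful flatness, and the descent already established in the preceding proposition, which assemble directly. I therefore expect no serious obstacle.
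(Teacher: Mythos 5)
Your argument is correct. Note that the paper itself offers no proof of this proposition; it is listed among basic facts on Azumaya algebras with a pointer to the reference [KO], so there is no in-text argument to compare against. What you give is the standard one: flatness of a finite product of localizations, surjectivity of $\Spec\bigl(\prod_i A[1/a_i]\bigr)\ra\Spec(A)$ from $(a_1,\cdots,a_r)=A$, and then faithfully flat descent via the preceding proposition combined with the componentwise characterization of finitely generated projective modules of constant rank and of the map $T\otimes_B T^{\circ}\ra\End_B(T)$ over a finite product ring. All steps are sound, including the one you flag as needing care.
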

%
%
%
%
%
%
%
%

\begin{proposition}
 Let $S$ be an Azumaya algebra of degree $d$
 over a commutative ring $A$.
 Then there is a faithfully flat homomorphism $f : A \ra C$
 such that
 $S \otimes_A C$ is isomorphic to $\rM_d(C)$.
 (We call $f$ a {\it splitting} of $S$.)
\end{proposition}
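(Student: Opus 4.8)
The plan is to produce $C$ as the coordinate ring of the affine $A$-scheme that parametrizes algebra isomorphisms between $S$ and the matrix algebra of degree $d$; the \emph{tautological} isomorphism living over this ring is then the desired splitting, so that all the real content of the proof is concentrated in checking that this scheme is faithfully flat over $A$. First I would reduce to the case in which $S$ is free of rank $d^2$ as an $A$-module. Since $S$ is finitely generated projective, there exist $a_1,\dots,a_r$ with $(a_1,\dots,a_r)_A = A$ such that each $S[1/a_i]$ is free over $A[1/a_i]$; by the preceding proposition the map $A \to B := \prod_i A[1/a_i]$ is faithfully flat. If each factor admits a faithfully flat splitting $A[1/a_i] \to C_i$, then $B \to C := \prod_i C_i$ is faithfully flat and
\[
 S \otimes_A C \;\isom\; \prod_i \left(S[1/a_i]\otimes_{A[1/a_i]} C_i\right) \;\isom\; \prod_i \rM_d(C_i) \;\isom\; \rM_d(C),
\]
so the composite $A \to C$ is a splitting of $S$. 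Hence it suffices to treat the case where $S$ is $A$-free, say with basis $e_1,\dots,e_{d^2}$ and structure constants $e_k e_l = \sum_m c^m_{kl} e_m$, $c^m_{kl} \in A$.

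In this situation the representability is a routine bookkeeping of polynomial relations. A homomorphism of $C$-algebras $\phi \colon S \otimes_A C \to \rM_d(C)$ is determined by the matrices $\phi(e_k) = (x^k_{ij})_{ij}$, and the requirements that $\phi$ preserve the unit and satisfy $\phi(e_k)\phi(e_l) = \sum_m c^m_{kl}\,\phi(e_m)$ become polynomial equations in the entries $x^k_{ij}$ with coefficients in $A$. Imposing in addition that $\phi$ be bijective---for instance by adjoining the entries of a candidate inverse together with the relations expressing that the two maps are mutually inverse---cuts out an affine $A$-scheme $Y = \Spec C_0$ representing the functor
\[
 C \longmapsto \{\,C\text{-algebra isomorphisms } S \otimes_A C \isom \rM_d(C)\,\}.
\]

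The crux is to prove that $Y \to \Spec A$ is faithfully flat. For surjectivity, at each point $x \in \Spec A$ the fiber is nonempty: after base change to an algebraic closure $\ol{\kappa(x)}$ of the residue field, $S \otimes_A \ol{\kappa(x)}$ is a central simple algebra of degree $d$ over the algebraically closed field $\ol{\kappa(x)}$, hence isomorphic to $\rM_d(\ol{\kappa(x)})$ because the Brauer group of an algebraically closed field is trivial. For flatness I would exploit the $\PGL_d$-action on $Y$ obtained by conjugating the target matrix algebra: by Skolem--Noether the automorphism sheaf of $\rM_d$ is $\PGL_d$, so this action is simply transitive and $Y$ is a pseudo-torsor under $\PGL_d$. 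Because $\PGL_d$ is smooth over $\bZ$, a splitting isomorphism over a square-zero quotient lifts---the obstruction is governed by $\PGL_d$---so $Y$ is formally smooth over $A$; being of finite presentation, it is therefore smooth, in particular flat. Combined with the surjectivity just established, $Y \to \Spec A$ is faithfully flat.

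Finally, taking $C := C_0$, the identity in $\Hom_A(C_0, C_0)$ corresponds under the representing isomorphism to a $C_0$-algebra isomorphism $S \otimes_A C_0 \isom \rM_d(C_0)$, which is exactly the required splitting. The main obstacle is the faithful flatness of $Y$: representability and the reduction to the free case are formal, but the geometric input---nonemptiness of the geometric fibers, i.e. triviality of the Brauer group of an algebraically closed field, together with the smoothness of $\PGL_d$ that upgrades the simply transitive (pseudo-torsor) structure into an actual faithfully flat, even smooth, morphism---is where the substance of the argument lies.
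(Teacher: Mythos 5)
The paper gives no proof of this proposition: it appears in a list of facts about Azumaya algebras prefaced by ``For the proofs, see for instance \cite{KO}'', so there is no in-text argument to measure yours against. Your strategy --- represent the functor $C \mapsto \{\,C\text{-algebra isomorphisms } S\otimes_A C \isom \rM_d(C)\,\}$ by an affine scheme $Y=\Spec C_0$ of finite presentation over $A$, prove $Y\ra\Spec A$ faithfully flat, and take the tautological isomorphism over $C_0$ --- is the standard torsor-of-trivializations proof going back to Grothendieck. The reduction to the free case, the representability, and the surjectivity of $Y\ra\Spec A$ via the triviality of central simple algebras over algebraically closed fields are all correct.

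The one step whose justification does not hold up as written is the formal smoothness of $Y$. ``Surjective pseudo-torsor under a smooth group scheme, hence formally smooth'' is not a valid implication: for instance $\mathbb{G}_{a,k}$, viewed as a scheme over $k[\epsilon]=k[\epsilon]/(\epsilon^2)$ via $k[\epsilon]\thr k$, is a surjective pseudo-torsor under the smooth group $\mathbb{G}_{a,k[\epsilon]}$ (one checks $Y\times_{k[\epsilon]}Y\isom \mathbb{G}_{a,k}\times_k\mathbb{G}_{a,k}\isom G\times_{k[\epsilon]}Y$), yet it is neither flat nor formally smooth over $k[\epsilon]$. The pseudo-torsor structure only gives you $Y_C\isom\PGL_{d,C}$ \emph{after} a section over $C$ exists; to lift a section along a square-zero surjection $C'\thr C=C'/I$ you must actually prove that a splitting $u: S\otimes_A C\isom\rM_d(C)$ lifts to $C'$, and that is exactly where the algebraic input about Azumaya algebras has to enter --- the appeal to smoothness of $\PGL_d$ alone is circular, since the relevant obstruction-theoretic vanishing presupposes that $Y_{C'}$ is already locally trivial. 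Two standard ways to supply the missing lemma: (i) lift $u$ to a $C'$-module isomorphism $v: S\otimes_A C'\ra\rM_d(C')$ using projectivity, and kill the Hochschild $2$-cocycle $v(xy)-v(x)v(y)\in I\cdot\rM_d(C')$ by noting that Azumaya algebras are separable, so their Hochschild cohomology vanishes in positive degrees; or (ii) lift the complete set of matrix units $u^{-1}(e_{ij})$ from $S\otimes_A C$ to $S\otimes_A C'$ along the nilpotent ideal $I$ (orthogonal idempotents, and then matrix units, lift modulo nilpotents), which identifies $S\otimes_A C'$ with $\rM_d(C')$ compatibly with $u$. With either supplement your argument is complete; without it, the crux you yourself single out is asserted rather than proved.
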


\begin{remark}
 Since $f$ is faithfully flat,
 it is injective and
 the structure homomorphism $A \ra S$ is also injective.
 Moreover,
 $S$ is identified with a subring of $\rM_d(C)$
 and $S \cap C = A$ is the center of $S$.
\end{remark}

\begin{proposition}
 Let $S$ be an Azumaya algebra of degree $d$ over $A$.
 Then there is a surjective $A$-module homomorphism
 $\Tr := \Tr_{S/A} : S \ra A$.
 If $A \ra C$ is a splitting of $S$,
 then $\Tr \otimes_A C : \rM_d(C) \ra C$
 is equal to the trace map of $\rM_d(C)$.
 There is also a map $\N := \N_{S/A} : S \ra A$
 such that the restriction on $S^{\times}$,
 that is
 $\N \!\!\!\mid_{S^{\times}} : S^{\times} \ra A^{\times}$
 is a group homomorphism.
 If $A \ra C$ is a splitting of $S$,
 then $\N \otimes_A C : \rM_d(C) \ra C$
 is equal to the norm map of $\rM_d(C)$.
\end{proposition}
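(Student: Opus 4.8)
The plan is to construct both $\Tr$ and $\N$ by faithfully flat descent from the split case. First, invoking the previous proposition, choose a splitting $f : A \ra C$ together with a $C$-algebra isomorphism $\psi : S \otimes_A C \isom \rM_d(C)$. On the matrix algebra we have the ordinary trace $\mathrm{tr} : \rM_d(C) \ra C$ and the determinant $\det : \rM_d(C) \ra C$; set $\widetilde{\Tr} := \mathrm{tr} \circ \psi$ and $\widetilde{\N} := \det \circ \psi$ on $S \otimes_A C$. Since $f$ is faithfully flat it is injective and identifies $A$ with the equalizer of the two maps $C \rightrightarrows C \otimes_A C$ sending $c \mapsto c \otimes 1$ and $c \mapsto 1 \otimes c$ (the beginning of the Amitsur complex is exact). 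Thus to produce a map $\Tr : S \ra A$ it suffices to show that $\widetilde{\Tr}(s \otimes 1)$ lies in this equalizer for every $s \in S$, and likewise for $\widetilde{\N}$.

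The core step is this equalizer condition, which amounts to the independence of $\widetilde{\Tr}$ and $\widetilde{\N}$ from the chosen splitting. Base-changing $\psi$ along the two structure maps $C \ra C \otimes_A C$ yields two isomorphisms $\psi_1, \psi_2 : S \otimes_A (C \otimes_A C) \isom \rM_d(C \otimes_A C)$ of $C \otimes_A C$-algebras. Their comparison $\psi_2 \circ \psi_1^{-1}$ is an automorphism of the matrix algebra $\rM_d(C \otimes_A C)$, which by the Skolem--Noether theorem for Azumaya algebras is locally inner. Because $\mathrm{tr}$ and $\det$ are invariant under inner automorphisms (conjugation), and this invariance may be checked locally, the element $s \otimes 1$ produces the same trace and determinant whether computed through $\psi_1$ or through $\psi_2$; that is, $\widetilde{\Tr}(s \otimes 1) \otimes 1 = 1 \otimes \widetilde{\Tr}(s \otimes 1)$ in $C \otimes_A C$, and similarly for $\widetilde{\N}$. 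Hence both values lie in $A$, yielding well-defined maps $\Tr := \Tr_{S/A}$ and $\N := \N_{S/A}$ on $S$, and by construction $\Tr \otimes_A C = \mathrm{tr}$ and $\N \otimes_A C = \det$ after identifying $S \otimes_A C$ with $\rM_d(C)$.

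It remains to record the stated properties. The $A$-linearity of $\Tr$ follows from the $C$-linearity of $\mathrm{tr}$ and the injectivity of $A \ra C$. For surjectivity, the cokernel of $\Tr : S \ra A$ becomes, after the faithfully flat base change $A \ra C$, the cokernel of $\mathrm{tr} : \rM_d(C) \ra C$, which vanishes (for instance a matrix unit $E_{11}$ has trace $1$); faithful flatness then forces the cokernel of $\Tr$ itself to be zero. For the norm, the multiplicativity $\N(ss') = \N(s)\N(s')$ holds because the corresponding identity holds for $\det$ after base change and $A \ra C$ is injective; moreover, if $s \in S^{\times}$ then $\N(s)\N(s^{-1}) = \N(1) = 1$, so $\N(s) \in A^{\times}$ and $\N$ restricts to a group homomorphism $S^{\times} \ra A^{\times}$.

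The main obstacle is the descent step of the second paragraph: one must know that $\mathrm{tr}$ and $\det$ are genuinely intrinsic to the Azumaya algebra, independent of the splitting. This is precisely where Skolem--Noether enters, guaranteeing that any two splittings differ locally by an inner automorphism, under which trace and determinant are preserved; equivalently, one may phrase the matter through the well-definedness of the reduced characteristic polynomial of an Azumaya algebra (\Cf \cite{KO}). Once this invariance is established, the remaining verifications reduce to the routine descent and faithful-flatness arguments above.
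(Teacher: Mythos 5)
Your argument is correct. Note, however, that the paper does not prove this proposition at all: it is listed among the facts on Azumaya algebras that are quoted with the blanket reference ``for the proofs, see for instance \cite{KO}.'' What you have written is essentially the standard construction from that reference, carried out in full: define $\Tr$ and $\N$ on $S\otimes_A C$ via a splitting, descend along the faithfully flat map $A\ra C$ using the exactness of $A\ra C\rightrightarrows C\otimes_A C$, and justify the equalizer condition by comparing the two base-changed splittings over $C\otimes_A C$ through a (Zariski-locally inner, by Skolem--Noether/Rosenberg--Zelinsky) automorphism of $\rM_d(C\otimes_A C)$, under which trace and determinant are invariant; the surjectivity and unit-group statements then follow by faithfully flat base change exactly as you say. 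One small point worth making explicit: your construction fixes one splitting $C$, whereas the proposition asserts $\Tr\otimes_A C'=\mathrm{tr}$ for \emph{every} splitting $A\ra C'$; this follows by running your Skolem--Noether comparison over $C\otimes_A C'$ instead of $C\otimes_A C$, which you essentially acknowledge when you invoke the well-definedness of the reduced characteristic polynomial. So there is no gap; your write-up simply supplies the proof the paper outsources.
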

\noindent
 We call $\Tr$ (resp. $\N$) the {\it reduced trace}
 (resp. {\it reduced norm}) on $S$.
%
%
 Let $\TdR$ be the subring of $\AdR$
 generated by $\Tr(\Im(\rho_{d,R}))$
 and $\SdR$ the subring of $\rM_d(\AdR)$
 which is generated by $\TdR$ and $\Im(\rho_{d,R})$.
 For any $d^2$-tuple $\br = (r_i)_{1 \le i \le d^2}$
 of elements of $R$,
 denote by $\bd = \bd(\br)$
 the determinant
 $\det(\Tr(\rho_{d,R}(r_i)\rho_{d,R}(r_j)) \in \TdR$.
 We call $\bd$ a {\it discriminant} of $\br$.
 Let $\SdRd$ denote the localization
 $\SdR \otimes_{\TdR}\TdRd$.
 Then $(\rho_{d,R}(r_i))_{1 \le i \le d^2}$ is
 a $\TdRd$-basis of $\SdRd$.

\begin{theorem}[\cite{Procesi}, 2.2,Theorem]
 $\SdRd$ is an Azumaya algebra of degree $d$
 over $\TdRd$.
\end{theorem}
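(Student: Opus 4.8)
The plan is to verify the two defining conditions of an Azumaya algebra of degree $d$ for $\SdRd$ over $\TdRd$; write $B := \TdRd$ and $S := \SdRd$. The first condition, that $S$ be a finitely generated projective $B$-module of rank $d^2$, is already in hand: by the statement immediately preceding the theorem, $(\rho_{d,R}(r_i))_{1\le i\le d^2}$ is a $B$-basis of $S$, so $S$ is in fact free of rank $d^2$. It therefore remains to prove that the natural homomorphism
$$
 \mu : S \otimes_B S^{\circ} \lr \End_B(S), \qquad s\otimes s' \longmapsto (t \mapsto s t s'),
$$
is an isomorphism. Both source and target are free $B$-modules of rank $d^4$ (the target being $\rM_{d^2}(B)$), so $\mu$ is given by a square matrix over $B$, and it is an isomorphism if and only if its determinant $\delta \in B$ is a unit.

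Since a non-unit of $B$ lies in some maximal ideal, $\delta$ is a unit precisely when its image in the residue field $k(x)$ is nonzero for every maximal ideal $x$ of $B$, i.e. when $\mu \otimes_B k(x)$ is an isomorphism for all such $x$. Now $\mu \otimes_B k(x)$ is exactly the canonical enveloping map $\bar S \otimes_{k(x)} \bar S^{\circ} \to \End_{k(x)}(\bar S)$ of the fibre algebra $\bar S := S \otimes_B k(x)$, a $d^2$-dimensional $k(x)$-algebra; and by the classical characterization of central simple algebras over a field, this map is an isomorphism if and only if $\bar S$ is central simple over $k(x)$. Thus the whole statement reduces to showing that every fibre $\bar S$ is central simple of degree $d$. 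Passing to a geometric point $\bar x : B \to \Omega$ over $x$ with $\Omega$ algebraically closed, it suffices to identify $\bar S \otimes_{k(x)} \Omega \cong \SdRd \otimes_B \Omega$ with $\rM_d(\Omega)$.

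Here is where the content lies. The reduced-trace pairing $\langle a,b\rangle := \Tr(ab)$ on $S$ has Gram matrix $(\Tr(\rho_{d,R}(r_i)\rho_{d,R}(r_j)))$ in the basis $(\rho_{d,R}(r_i))$, whose determinant is the discriminant $\bd \in B^{\times}$; hence the pairing is unimodular and stays nondegenerate on every fibre. By Procesi's theory, the geometric point $\bar x$, at which $\bd$ is invertible, corresponds to an absolutely irreducible representation $\bar\rho : R \to \rM_d(\Omega)$, and $\SdRd \otimes_B \Omega$ is identified with the $\Omega$-subalgebra of $\rM_d(\Omega)$ generated by the $\bar\rho(r_i)$. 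Since the $d^2$ matrices $\bar\rho(r_i)$ have invertible trace-Gram matrix (the image of $\bd$) and the trace form on $\rM_d(\Omega)$ is nondegenerate, these matrices are linearly independent, hence form an $\Omega$-basis of $\rM_d(\Omega)$; in particular they span, so the subalgebra they generate is all of $\rM_d(\Omega)$. Therefore $\SdRd \otimes_B \Omega \cong \rM_d(\Omega)$, each fibre is central simple of degree $d$, and the argument is complete.

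I expect the main obstacle to be precisely this fibrewise step: producing, over each geometric point where $\bd$ is invertible, the absolutely irreducible representation realizing the given character, and showing that its image spans the full matrix algebra. This is the geometric heart of Procesi's construction, namely the equivalence between invertibility of the discriminant, nondegeneracy of the trace form, and absolute irreducibility; the reduction to fibres and the matrix-theoretic spanning statement are then routine, as is the descent from fibres back to $B$ via the determinant of $\mu$.
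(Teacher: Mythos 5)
First, note that the paper does not prove this statement at all: it is imported verbatim from Procesi with a citation, so there is no in-paper proof to compare against. Judged on its own terms, your reduction is sound as far as it goes. The passage from the Azumaya condition to fibrewise central simplicity (both modules free of rank $d^4$, so the enveloping map $\mu$ is an isomorphism iff $\det\mu$ is a unit iff $\mu\otimes_B k(x)$ is an isomorphism at every maximal ideal, iff every geometric fibre is $\rM_d(\Omega)$) is correct, and your Gram-matrix argument showing that $d^2$ elements of $\rM_d(\Omega)$ with invertible trace-form Gram matrix are linearly independent and hence span is complete and characteristic-free.

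The genuine gap is the step you yourself flag: the assertion that a geometric point $\bar x : \TdRd \to \Omega$ ``corresponds to an absolutely irreducible representation $\bar\rho : R \to \rM_d(\Omega)$'' whose character restricts to $\bar x$, with $\SdRd\otimes_B\Omega$ identified with the subalgebra generated by $\Im(\bar\rho)$. This is not a routine obstacle to be filled in later; it \emph{is} the theorem. Since $\TdR$ is merely a subring of $\AdR$, there is no a priori reason a prime of $\TdRd$ should be the restriction of a prime of $\AdR[1/\bd]$, i.e.\ that the point of the trace ring lifts to an actual representation; establishing this (via the Cayley--Hamilton/trace-identity structure of $\SdR$, or the Artin--Procesi criterion) is the content of Procesi's argument. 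Worse, appealing to ``Procesi's theory'' for this correspondence is circular in the logical order of both the paper and the source: the bijection between points of $\bTdR$ and absolutely irreducible representations (Theorem \ref{thm:Procesi-GL}) is \emph{deduced from} the Azumaya property of $\SdRd$ — the paper's description of that bijection explicitly constructs the representation as $R \to \SdRd \to \SdRd\otimes_{\TdRd}A$ ``since $\SdRd$ is an Azumaya algebra.'' A non-circular completion would instead argue directly on the fibre: $\bar S := \SdRd\otimes_B\Omega$ is a $d^2$-dimensional $\Omega$-algebra carrying a trace with invertible discriminant and satisfying the $d$-th Cayley--Hamilton identity (inherited from $\rM_d(\AdR)$), and one must show any such algebra over an algebraically closed field is $\rM_d(\Omega)$; that structural input, not the reduction surrounding it, is what your proposal leaves unproven. (You also take on faith that $(\rho_{d,R}(r_i))$ is a $\TdRd$-basis of $\SdRd$; the paper asserts this without proof, and it too rests on the same trace-identity machinery.)
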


\begin{definition}
 Let $\bTdR$ be the open subscheme of $\Spec(\TdR)$
 covered by the affine open subschemes
 $\Spec(\TdRd)$,
 where $\bd = \bd(\br)$ runs through
 all the $d^2$-tuples
 $\br = (\br_i)_{1 \le i \le d^2}$ of elements of $R$.
\end{definition}

\noindent
 Note that
 if $R$ is finitely generated over $\bZ$,
 then the scheme $\bTdR$ is
 of finite type over $\bZ$.

\begin{definition}
 Let $R$ be a (non-commutative) associative ring.
 Let $S$ be an Azumaya algebra of degree $d$ over $A$.
 A ring homomorphism $\rho : R \ra S$ is called
 an {\it absolutely irreducible representation} of degree $d$ over $A$,
 if $S$ is generated by ${\rm Im}(\rho)$ as an $A$-module.
 Two absolutely irreducible representations
 $\rho_1 : R \ra S_1$ and $\rho_2: R \ra S_2$
 over $A$ are {\it equivalent}
 if there exists an $A$-algebra isomorphism
 $f : S_1 \ra S_2$ such that $\rho_2 = f \circ \rho_1$.
\end{definition}

\begin{remark}
 Let $k$ be a field.
 Let $\rho : G \ra \GL_d(k)$ be a representation
 and $k[\rho] : k[G] \ra \rM_d(k)$
 an associated ring homomorphism.
 It is known that
 $\rho$ is absolutely irreducible
 (that is, the composition
 $\rho : G \ra \GL_d(k) \ra \GL_d(\ol{k})$
 is irreducible for an algebraic closure $\ol{k}$ of $k$)
 if and only if
 $k[\rho]$ is absolutely irreducible
 in the above sense
 (\Cf \cite{Algebre8}, \S$13$, Prop.\ $5$).
\end{remark}

 Let $\cF_{R,d} : (\text{Comm. Rings}) \ra (\text{Sets})$
 be the functor
 which sends a commutative ring $A$
 to the set $\cF_{R,d}(A)$ of equivalence classes of
 absolutely irreducible representations of $R$ of degree $d$
 over $A$.
 Then the following result has been obtained
 by Procesi (\cite{Procesi}):

\begin{theorem}[\cite{Procesi}, 2.2,Theorem]
\label{thm:Procesi-GL}
 The functor $\cF_{R,d}$ is representable
 by the scheme $\bTdR$.
\end{theorem}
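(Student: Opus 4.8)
The plan is to exhibit, for every commutative ring $A$, a bijection
$$
\cF_{R,d}(A) \;\cong\; \Hom(\Spec A, \bTdR)
$$
with the set of scheme morphisms on the right, natural in $A$. I would build the two maps out of the reduced trace together with the universal property of $\AdR$ supplied by \refprop{prop:RepresentabilityGLcase}, and then check that they are mutually inverse.

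First I would pass from a representation to a point. Let $\rho : R \ra S$ be an absolutely irreducible representation of degree $d$ over $A$, and choose a faithfully flat splitting $A \ra C$ with $S \otimes_A C \cong \rM_d(C)$. The composite $R \ra S \ra S \otimes_A C \cong \rM_d(C)$ is a degree-$d$ representation over $C$, so by \refprop{prop:RepresentabilityGLcase} it corresponds to a ring homomorphism $\phi : \AdR \ra C$. Since $\phi$ sends the trace of the universal representation to the reduced trace $\Tr_{S/A}(\rho(r))$, which lies in the center $A \subseteq C$, the homomorphism $\phi$ carries $\TdR$ into $A$ and thus restricts to $\psi : \TdR \ra A$; faithfully flat descent shows that $\psi$ does not depend on the chosen splitting. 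To see that $\Spec(\psi)$ factors through the open subscheme $\bTdR$ of $\Spec(\TdR)$, I would work locally on $\Spec A$: as $S$ is projective of rank $d^2$ and generated by $\Im(\rho)$ over $A$, there exist $r_1,\dots,r_{d^2} \in R$ whose images form a local $A$-basis of $S$, and for such an $\br$ the discriminant $\bd(\br)$ is sent to a unit, so the point lands in $\Spec(\TdRd)$.

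For the reverse direction I would pull back the universal Azumaya algebra. Given a morphism $g : \Spec A \ra \bTdR$, cover $\bTdR$ by the affine opens $\Spec(\TdRd)$; over each of these, $\SdRd$ is an Azumaya algebra of degree $d$ over $\TdRd$ by the preceding theorem of Procesi (\cite{Procesi}, 2.2) and carries the representation $R \ra \SdRd$ induced by $\rho_{d,R}$. Pulling these back along $g$ and gluing — the various $\SdRd$ being localizations of the single ring $\SdR$, they agree on overlaps — yields an Azumaya algebra $S$ of degree $d$ over $A$ together with a representation $\rho : R \ra S$. Because $\SdR$ is generated by $\Im(\rho_{d,R})$ over $\TdR$, the module $S$ is generated by $\Im(\rho)$ over $A$, so $\rho$ is absolutely irreducible.

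It then remains to check that the two assignments are mutually inverse and natural in $A$. Naturality is a formal verification. That equivalent representations yield the same $\psi$ is immediate, since equivalence preserves the reduced trace. The substantive point, which I expect to be the main obstacle, is the converse rigidity: an absolutely irreducible representation must be recoverable up to equivalence from its trace function alone, so that distinct equivalence classes map to distinct points and the point-to-representation construction inverts the representation-to-point construction. This is the reconstruction of an absolutely irreducible representation from its character, executed in the Azumaya setting (split to $\rM_d(C)$, then descend back to $A$) rather than over a field, and carrying it out compatibly with the gluing of the local Azumaya algebras is where the bulk of the work lies.
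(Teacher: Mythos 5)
The paper does not actually prove this theorem: it is quoted from Procesi (\cite{Procesi}, 2.2), and the text following the statement only \emph{describes} the two maps of the correspondence. Those are exactly the two maps you construct. Your forward map (split $S$ by a faithfully flat $A\to C$, invoke \refprop{prop:RepresentabilityGLcase} to get $\AdR\to C$, observe that reduced traces land in $A$ so that $\TdR\to A$, and use a locally chosen $d^2$-tuple with invertible discriminant to land in the open subscheme $\bTdR$) and your backward map (pull back the Azumaya algebra $\SdRd$ along $\TdRd\to A$ and glue) coincide with the paper's description; if anything you are more careful, since you work locally on $\Spec A$ and glue, whereas the paper tacitly assumes a global generating $d^2$-tuple and a factorization of $\Spec(A)\to\bTdR$ through a single affine chart.

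As a proof, however, your proposal has the gap you yourself flag, and it is not a small one: the entire content of Procesi's theorem is that these two maps are mutually inverse. One direction (point $\to$ representation $\to$ point) is essentially formal, since the reduced trace of $\SdRd\otimes_{\TdRd}A$ on the image of $R$ is by construction the image of $\Tr\rho_{d,R}$ under the given homomorphism. The other direction requires producing, for an arbitrary absolutely irreducible $\rho:R\ra S$ with associated point $\psi:\TdRd\ra A$, an isomorphism of Azumaya algebras $\SdRd\otimes_{\TdRd}A\;\isom\;S$ compatible with the representations --- i.e.\ the reconstruction of $\rho$, up to equivalence, from its reduced trace. This rests on trace identities and a Cayley--Hamilton-type argument in the Azumaya setting (together with injectivity on equivalence classes) and cannot be dispatched by descent and gluing alone. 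Since the paper supplies this by citation rather than by argument, your write-up matches the paper's level of detail on the construction of the correspondence, but it does not constitute an independent proof of the theorem.
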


 Here we only describe
 the correspondence
 between the sets $\cF_{R,d}(A)$ and $\bTdR(A)$
 for any commutative ring $A$.
 Let $\rho : R \ra S$
 be an element of $\cF_{R,d}(A)$,
 i.e.
 (an isomorphism class of)
 an absolutely irreducible representation
 of $R$ into an Azumaya algebra $S$
 of degree $d$ over $A$.
 Then there is a faithfully flat
 ring homomorphism $A \hr C$
 such that
 $S \hr S \otimes_A C \isom \rM_d(C)$.
 Thus there is a unique ring homomorphism
 $f_{\rho} : \AdR \ra C$
 which induces a commutative diagram
$$
\begin{matrix}
 R  & \ra  & \SdR & \hr & \rM_d(\AdR) \\
    & \sea & \da  &     &   \da       \\
    &      &  S   & \hr & \rM_d(C).
\end{matrix}
$$
 Since $S$ is an Azumaya algebra of degree $d$,
 there is a $d^2$-tuple
 $\br = (r_i)_{1 \le i \le d^2}$
 of elements of $R$
 such that they generate $S$ over $A$.
 Put $\bd = \bd(\br)$.
 Then $\det(\Tr(r_ir_j))$ is invertible in $A$.
 Thus
 $f_{\rho}\!\!\!\mid_{\TdR} : \TdR \ra A$ induces $\TdRd \ra A$.
 This defines an $A$-rational point
 $\Spec(A) \ra \Spec(\TdRd) \ra \bTdR$.

 Conversely,
 given an $A$-rational point
 $\Spec(A) \ra \bTdR$,
 we have a ring homomorphism
 $\TdRd \ra A$
 for some $\bd = \bd(\br)$.
 Then we have
 an absolutely irreducible representation
 $\rho : R \ra \SdRd \ra \SdRd \otimes_{\TdRd}A$
 of degree $d$ over $A$,
 since $\SdRd$ is an Azumaya algebra of degree $d$
 over $\TdRd$.

 Now we put $R = \bZ[G]$.
 For every Azumaya algebra $S$
 over a commutative ring $A$,
 let $S^1$ be the kernel
 of the reduced norm $N_S\!\!\mid_{S^{\times}}
 : S^{\times} \ra A^{\times}$.
 Let
$$
 \cF'_{G,d} : (\text{Comm. Rings}) \ra (\text{Sets})
$$
 be a functor
 which sends a commutative ring $A$
 to the set of isomorphism classes of
 absolutely irreducible representations
 $\rho : R \ra S$
 of $R$ into Azumaya algebras over $A$ of degree $d$
 such that
 $\rho(G)$ is contained in $S^1$.
 Note that
 $\cF'_{G,d}$ is a subfunctor of $\cF_{\bZ[G],d}$.

\begin{theorem}
\label{thm:Procesi-SL}
 The functor $\cF'_{G,d}$ is representable
 by a closed subscheme $\bTdGres$ of $\bTdG$.
 (Here we write $\bTdG$ instead of $\bTdR$.)
\end{theorem}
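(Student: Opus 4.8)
The plan is to realize $\bTdGres$ as the closed subscheme of $\bTdG$ cut out by the conditions that the reduced norms of the group elements equal $1$. Recall from \refthm{thm:Procesi-GL} that $\bTdG$ represents $\cF_{\bZ[G],d}$ and is covered by the affine charts $\Spec(\TdGd)$, on each of which $\SdGd$ is an Azumaya algebra of degree $d$ over $\TdGd$ equipped with the universal representation $\rho_{d,G}\colon R \ra \SdGd$ (where $R=\bZ[G]$) and its reduced norm $N\colon \SdGd \ra \TdGd$.

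First I would note that for every $g \in G$ the element $\rho_{d,G}(g)$ is a unit of $\SdGd$, since $\rho_{d,G}(g)\rho_{d,G}(g^{-1}) = \rho_{d,G}(1) = 1$; hence $N(\rho_{d,G}(g)) \in \TdGd^{\times}$. On each chart I define $J_{\bd} \subseteq \TdGd$ to be the ideal generated by the elements $N(\rho_{d,G}(g)) - 1$ for $g \in G$. Because $N$ is multiplicative on units, $g \mapsto N(\rho_{d,G}(g))$ is a homomorphism $G \ra \TdGd^{\times}$; the identities $N(\rho_{d,G}(gh)) - 1 = (N(\rho_{d,G}(g)) - 1)N(\rho_{d,G}(h)) + (N(\rho_{d,G}(h)) - 1)$ and $N(\rho_{d,G}(g^{-1})) - 1 = -(N(\rho_{d,G}(g)) - 1)N(\rho_{d,G}(g))^{-1}$ then show that $J_{\bd}$ is already generated by $g$ ranging over any generating set of $G$, so it is finitely generated when $G$ is. Since the reduced norm is intrinsic to the Azumaya algebra and compatible with localization, the ideals $J_{\bd}$ are compatible on the overlaps of the charts $\Spec(\TdG[1/\bd])$ and glue to a quasi-coherent ideal defining a closed subscheme $\bTdGres \hr \bTdG$.

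It then remains to verify the functorial characterization. For a commutative ring $A$, an $A$-point of $\bTdG$ is the class of an absolutely irreducible representation $\rho\colon R \ra S$ with $S$ Azumaya of degree $d$ over $A$, classified by a homomorphism $\TdGd \ra A$ for a suitable $\bd$, as recalled after \refthm{thm:Procesi-GL}. Because the reduced norm commutes with base change, $N(\rho(g))$ is the image of $N(\rho_{d,G}(g))$ under the classifying map. Hence this map factors through $\bTdGres$ if and only if $N(\rho(g)) = 1$ for all $g \in G$, i.e. if and only if $\rho(G) \subseteq S^1$, i.e. if and only if the point belongs to $\cF'_{G,d}(A)$. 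This identifies $\bTdGres$ with the representing object of $\cF'_{G,d}$.

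The main obstacle I anticipate is the base-change compatibility underpinning the last paragraph: one must know that the reduced norm of Procesi's universal Azumaya algebra specializes to the reduced norm of $S$ over an arbitrary base $A$, not merely over fields, so that the scheme-theoretic condition $N(\rho_{d,G}(g)) = 1$ genuinely matches the group-theoretic condition $\rho(G) \subseteq S^1$. Granting the Azumaya formalism reviewed above — that $N$ is defined through a splitting, is independent of the splitting, and is stable under base change — this matching, together with the gluing of the $J_{\bd}$ and the reduction to generators, is then formal.
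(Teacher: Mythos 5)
Your proof is correct and represents the same subfunctor, but it cuts out the closed subscheme by a genuinely different (if closely parallel) route. The paper never forms the ideal generated by $\N(\rho_{d,G}(g)) - 1$ inside $\TdGd$; instead it imposes the condition upstairs, defining $\AdGres$ as the quotient of the universal representation ring $\AdG$ by the ideal generated by $\det(\rho_{d,G}(g)) - 1$, taking $\TdGres$ to be the induced quotient of $\TdG$ (its image in $\AdGres$), and covering $\bTdGres$ by the charts $\Spec(\TdGresd)$; the functorial verification then goes through the faithfully flat splitting $C$ of $S$ and a commutative diagram, concluding that $\rho\in\cF'_{G,d}(A)$ if and only if $f_\rho:\AdG\ra C$ factors through $\AdGres$. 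Your version stays entirely at the level of the universal Azumaya algebra $\SdGd$ over $\TdGd$ and its intrinsic reduced norm. What this buys: the defining ideal is exhibited directly inside $\TdGd$, so the descent from a condition in $\AdGres$ to a condition on $\TdG$ --- which the paper leaves implicit in its final ``thus the statement follows'' --- becomes automatic, and you also record the reduction to a finite generating set and the gluing over the charts, neither of which the paper spells out. What the paper's route buys is that it reuses verbatim the ring $\AdGres$ already introduced for the representability of $\Hom(G,\SL_d(-))$, so no new norm computations are needed. Both arguments ultimately rest on the same two facts, which you correctly identify as the crux: the reduced norm is computed by the determinant after a splitting, and it commutes with arbitrary base change.
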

\begin{proof}
 Let $\AdGres$ be the quotient ring of $\AdG$
 by the ideal generated by
 the elements $\det(\rho_G(g)) - 1$ as before.
 We denote by $\TdGres$ (resp. $\SdGres$)
 the subring of $\AdGres$
 generated by the traces of $\Im(\rho'_{d,G})$
 (resp. the subring of $\rM_d(\AdGres)$
 generated by $\TdGres$ and $\Im(\rho'_{d,G})$),
 which is a quotient ring
 of $\TdG$ (resp. $\SdG$).
 Let $\bTdGres$ be the closed subscheme
 of $\bTdG$ covered by
 the affine open subschemes $\TdGresd$.
 Now we prove that
 this is the scheme which represents
 the functor $\cF'_{G,d}$.
 Let $\rho : R \ra S$
 be an element of $\cF_{\bZ[G],d}(A)$,
 i.e.
 (an isomorphism class of)
 an absolutely irreducible representation
 of $\bZ[G]$ into an Azumaya algebra $S$
 of degree $d$ over $A$.
 As we see above,
 there is a faithfully flat ring homomorphism
 $f_{\rho} : \AdG \ra C$
 where $C$ is a splitting of $S$ over $A$.
 By \refthm{thm:Procesi-GL}
 we have a corresponding
 $A$-rational point $\TdGd \ra A$ of $\bTdG$
 for a suitable discriminant $\bd$.
 Now we have the following commutative diagram:
$$
\xymatrix{
\rM_d(\AdG) \ar[dr]_{\rM_d(f_{\rho})}  & \SdG \ar[d]\ar[l]^{\hspace{5mm}\text{inj.}}\ar[r] & \SdG[1/\bd] \ar[dd]\\
                 & \rM_d(C) & \\
\bZ[G]   \ar[uu]^{\rho_{d,G}}\ar[uur]\ar[r]^{\rho}  &   S  \ar[u]^{\text{inj.}}\ar[r]^{\hspace{-20mm}\text{isom.}}  & \SdG[1/\bd]\otimes_{\TdG[1/\bd]} A.
}
$$
\noindent 
 Therefore we see that
 $\rho$ is in $\cF'_{G,d}(A)$
 if and only if
 $f_{\rho} : \AdG \ra A$ factors through $\AdGres$.
 Thus the statement follows.
\end{proof}

\begin{corollary}
 $\bTdGres(\bC) \isom\cF'_{G,d}(\bC)$
 is equal to the set of conjugacy classes of
 irreducible representations
 of $G$ into $\SL_d(\bC)$.
 In particular,
 it is equal to the set of irreducible characters of
 $\SL_d(\bC)$-representations of $G$ (\Cf \cite{Nakamoto},Theorem 6.12).
\end{corollary}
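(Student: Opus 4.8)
The plan is to deduce the statement directly from \refthm{thm:Procesi-SL}, which already furnishes the bijection $\bTdGres(\bC) \isom \cF'_{G,d}(\bC)$; what remains is to make the set $\cF'_{G,d}(\bC)$ explicit. By definition it consists of equivalence classes of absolutely irreducible representations $\rho : \bZ[G] \ra S$ into Azumaya algebras $S$ of degree $d$ over $\bC$ whose image of $G$ lies in $S^1$. The first step is to note that an Azumaya algebra of degree $d$ over the field $\bC$ is a central simple $\bC$-algebra of dimension $d^2$, and since $\bC$ is algebraically closed its Brauer group is trivial, so $S \isom \rM_d(\bC)$. Hence every class in $\cF'_{G,d}(\bC)$ is represented by a ring homomorphism $\rho : \bZ[G] \ra \rM_d(\bC)$, equivalently (by the standard identification $\Hom(\bZ[G],\rM_d(\bC)) = \Hom(G,\GL_d(\bC))$) by a representation $\sigma : G \ra \GL_d(\bC)$.

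Next I would translate the two defining conditions. That $\rho$ be absolutely irreducible in Procesi's sense, i.e. that $\Im(\rho)$ generate $\rM_d(\bC)$ as a $\bC$-module, is by the Remark preceding \refthm{thm:Procesi-GL} (see \cite{Algebre8}) equivalent to the ordinary irreducibility of $\sigma$: concretely, Burnside's density theorem shows that an irreducible $\sigma$ has $\bC$-linear span equal to all of $\rM_d(\bC)$, while a reducible $\sigma$ preserves a proper subspace and so cannot span. The requirement $\rho(G) \subset S^1$ says that each $\sigma(g)$ lies in the kernel of the reduced norm $\N : \rM_d(\bC) \ra \bC$; since this reduced norm is the determinant, the condition is exactly $\sigma(G) \subset \SL_d(\bC)$. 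Thus the underlying objects are precisely the irreducible representations $\sigma : G \ra \SL_d(\bC)$.

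It then remains to match the equivalence relation on $\cF'_{G,d}(\bC)$ with conjugacy, and to upgrade conjugacy classes to characters. Two representatives $\rho_1,\rho_2$ are equivalent exactly when there is a $\bC$-algebra isomorphism $f : \rM_d(\bC) \ra \rM_d(\bC)$ with $\rho_2 = f \circ \rho_1$; by the Skolem--Noether theorem every such $f$ is inner, i.e. conjugation by some $g \in \GL_d(\bC)$, so equivalence of the $\rho_i$ is conjugacy of the $\sigma_i$, giving the identification of $\cF'_{G,d}(\bC)$ with conjugacy classes of irreducible $\SL_d(\bC)$-representations of $G$. For the final clause I would invoke the classical fact that over $\bC$ an irreducible representation is determined up to isomorphism by its character (\cite{Nakamoto}, Theorem 6.12), which passes from conjugacy classes to irreducible characters. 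Most of these steps are standard once $A=\bC$ is fixed; the point I expect to require the most care is the middle identification, namely confirming that Procesi's module-theoretic notion of absolute irreducibility coincides with classical irreducibility (via Burnside) and that the reduced-norm-$1$ condition on the Azumaya side is literally the determinant-$1$ condition, after which Skolem--Noether delivers the passage from algebra isomorphisms to conjugation cleanly.
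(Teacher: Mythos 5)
Your proposal is correct and follows exactly the route the paper intends: the paper states this corollary without proof, treating it as immediate from Theorem \ref{thm:Procesi-SL} together with the triviality of the Brauer group of $\bC$, the remark (citing \cite{Algebre8}) identifying Procesi's absolute irreducibility with classical irreducibility, the fact that the reduced norm on $\rM_d(\bC)$ is the determinant, and Skolem--Noether. You have simply written out these standard details, and each step is accurate.
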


\noindent 
 Therefore we can regard
 $\mathbb{T}'_2(\fg(M))(\bC) \isom \cF'_{\fg(M),2}(\bC)$
 as the open subset $X_{\Irr}(M)(\bC)$ consisting of
 all the irreducible characters
 of the $\SL_2(\bC)$-character variety $X(M)(\bC)$
 of $\fg(M)$ for a $3$-manifold $M$.


\section{Hasse-Weil zeta functions of character varieties}

\subsection{Hasse-Weil zeta functions of $\SL_2$-character varieties}

 Let $M$ be an orientable complete hyperbolic $3$-manifold of finite volume
 and $\cX(M)$ the moduli scheme $\bT'_2(\fg(M))$
 as in the previous section.
 Let $\cX_0(M)$ be an irreducible component
 containing the point corresponding to
 a (fixed) lift $\rho$ of the holonomy representation
 $\rho_M : \fg(M) \ra \PSL_2(\bC)$.

 First we prove a result on the dimension of
 the canonical component
 corresponding to the following theorem
 for the $\SL_2(\bC)$-character variety.

\begin{theorem}[Thurston(\Cf \cite{BOOKThurston},\cite{HBGeomTop})]
\label{thm:Thurston}
 Let $M$ be an orientable complete hyperbolic $3$-manifold
 of finite volume with $n$ cusps. Then we have
$$
\dim X(M)(\bC)_0 = n.
$$
\end{theorem}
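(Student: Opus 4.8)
The plan is to compute $\dim X(M)(\bC)_0$ as the dimension of the Zariski tangent space at the character $[\rho]$ of the (lifted) holonomy representation, exploiting that $\rho$ is irreducible and that $[\rho]$ is a smooth point. Write $\Ad\rho$ for the $\fg(M)$-module $\mathfrak{sl}_2(\bC)$ with the action $g\cdot v = \rho(g)v\rho(g)^{-1}$. By Weil's deformation theory the tangent space to $\Hom(\fg(M),\SL_2(\bC))$ at $\rho$ is the cocycle space $Z^1(\fg(M),\Ad\rho)$; since $\rho$ is irreducible its centralizer in $\SL_2(\bC)$ is $\{\pm I\}$, so the conjugation orbit of $\rho$ is $3$-dimensional and accounts precisely for the coboundaries $B^1(\fg(M),\Ad\rho)$. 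Hence the tangent space to $X(M)(\bC)$ at $[\rho]$ is $H^1(\fg(M),\Ad\rho)$, and it suffices to prove
$$
 \dim_{\bC} H^1(\fg(M),\Ad\rho) = n
$$
together with smoothness at $[\rho]$.

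To compute $H^1$ I would pass to the compact core: $M$ is the interior of a compact $\ol{M}$ with $\partial\ol{M}=\coprod_{i=1}^{n}T_i$ a union of $n$ tori, one per cusp, and $\ol{M}\simeq M$ is aspherical, so group and manifold cohomology agree. First, $H^0(\fg(M),\Ad\rho)=\bigl(\mathfrak{sl}_2(\bC)\bigr)^{\fg(M)}=0$ by irreducibility. Next I would treat the cusps: each $\pi_1(T_i)\cong\bZ^2$ maps under $\rho$ into a parabolic subgroup fixing a single point of $\partial\bH^3$, so $H^0(T_i,\Ad\rho)$ is the one-dimensional line of the fixed nilpotent; the vanishing $\chi(T_i)=0$ and Poincaré duality on $T_i$ then force $\dim H^1(T_i,\Ad\rho)=2\dim H^0(T_i,\Ad\rho)=2$, whence $\dim H^1(\partial\ol{M},\Ad\rho)=2n$. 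The Killing form makes $\Ad\rho$ a self-dual local system, so Poincaré--Lefschetz duality applies, and the \emph{half lives, half dies} principle for the oriented $3$-manifold $\ol{M}$ shows that the image of the restriction map
$$
 r : H^1(\ol{M},\Ad\rho) \lr H^1(\partial\ol{M},\Ad\rho)
$$
is a Lagrangian subspace for the cup-product symplectic form, hence of dimension $\tfrac{1}{2}\cdot 2n = n$.

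It then remains to show $r$ is injective, i.e. $\ker r = 0$, for then $\dim H^1(\fg(M),\Ad\rho)=\dim\Im r = n$. This injectivity is exactly the infinitesimal form of Mostow--Prasad rigidity — every infinitesimal deformation of the complete structure is detected on the cusps — and I expect this to be the main obstacle. In the closed case $n=0$ it degenerates to Calabi--Weil rigidity $H^1(\fg(M),\Ad\rho)=0$; in the finite-volume case one proves it by Hodge-theoretic vanishing for the associated harmonic forms (Garland--Raghunathan), the point being that the cusps carry the only non-$L^2$ classes. The same analysis yields that $[\rho]$ is a smooth point, so the local dimension of $X(M)(\bC)$ at $[\rho]$ equals $\dim H^1 = n$ and therefore the canonical component $X(M)(\bC)_0$ has dimension $n$. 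Alternatively, following Thurston directly, one fixes an ideal triangulation of $M$ and studies the gluing variety defined by the edge equations on the tetrahedral shape parameters; the Neumann--Zagier analysis of this variety near the complete structure produces exactly $n$ independent deformation parameters (the cusp shapes) and simultaneously gives smoothness, so that $\dim X(M)(\bC)_0 = n$ without invoking infinitesimal rigidity as a black box.
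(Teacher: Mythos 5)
First, a point of comparison: the paper does not prove this statement at all --- it is quoted as Thurston's theorem with references to \cite{BOOKThurston} and \cite{HBGeomTop} and used as a black box (only the consequence $\dim X(M)(\bC)_0=0$ for closed $M$ is exploited, via \reflem{lem:FromCtoQ}). So there is no proof in the paper to measure yours against; what you have written is a reconstruction of the standard external argument.

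As such a reconstruction it is essentially the right architecture, but one step is logically incomplete. Your cohomological computation --- $H^0=0$ by irreducibility, $\dim H^1(T_i,\Ad(\rho))=2$ for each parabolic cusp torus, half-lives-half-dies giving $\dim\mathrm{Im}(r)=n$, and infinitesimal rigidity ($\ker r=0$, Garland--Raghunathan) giving $\dim H^1(\fg(M),\Ad(\rho))=n$ --- controls only the Zariski \emph{tangent space} at $[\rho]$. Since the local dimension of a variety at a point is bounded above by the tangent-space dimension, this yields $\dim_{[\rho]}X(M)(\bC)\le n$, but it does not by itself yield the lower bound or smoothness; your sentence ``the same analysis yields that $[\rho]$ is a smooth point'' asserts rather than proves this. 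The missing ingredient is the independent lower bound $\dim_{[\rho]}X(M)(\bC)\ge n$, which in the standard treatment comes from a deficiency count on the representation variety (Culler--Shalen, Prop.~3.2.1: $\dim_{\rho}\Hom(\fg(M),\SL_2(\bC))\ge 3+n$ at an irreducible $\rho$ that is non-central on each boundary torus) or, equivalently, from the Neumann--Zagier analysis of the gluing variety. Once both bounds are in hand, their coincidence forces smoothness and $\dim X(M)(\bC)_0=n$. In other words, the ``alternative'' route you mention at the end is not an alternative but the necessary complement to your main argument; with that repair the proof is complete and is the standard one.
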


%
%
\begin{corollary}
\label{cor:FromCtoQ}
 $\dim X(M)(\bC)_0 = n$ implies
 $\dim X_0(M) = n$.
\end{corollary}
\begin{proof}


 Let $L$ be the Galois closure of $K_M$ over $\bQ$
 in an algebraic closure $\ol{\bQ}$ of $K_M$.
 Since the holonomy point
 is a smooth point of $X(M)(\bC)$
 (when $n=0$, it is clear.
 When $n>0$,
 see \cite{BoiPorBOOK}, Appendix B),
 the point in $X_0(M)$ is also smooth
 since it is an irreducible component of $X(M)$.
 Let $U \subset X_0(M)$ be the regular (smooth) locus of $X_0(M)$,
 which is an open subset of $X_0(M)$ containing the holonomy point.
 Let $U_L := U \otimes_{\bQ}L$ be the base change of $U$.
 Note that $U_L$ is dense in $(X_0(M))_L:=X_0(M)\otimes_{\bQ}L$
 since $U$ is dense in $X_0(M)$.
 Hence we see that $\dim U_L = \dim (X_0(M))_L = \dim X_0(M)$.
 Therefore it is enough to show that $\dim U_L = n$.

 Note that,
 if the Galois group $\Gal(L/\bQ)$ acts
 on the set of irreducible components
 $(=$ connected components since $U_L$ is smooth)
 of $U_L$ transitively
 and $\dim (U_L)_0 = n$ (where $(U_L)_0$ is an irreducible component of $U_L$
 containing the holonomy point),
%
 we see that $\dim U_L = n$.
 It is proved as follows.
 Let $\clQ$ be the algebraic closure of $\bQ$ containing $L$.
 The scheme $U_{\clQ}$ has an irreducible component (connected component) $U_0$
 containing the holonomy point.
 Since the absolute Galois group $\Gal(\clQ/\bQ)$ acts on $U_{\clQ}$
 the Galois image of the irreducible component $U_0$
 consists of finite number of irreducible components of $U_{\clQ}$.
 Note that $\dim U_0 = n$ since $\dim X(M)(\bC)_0 = n$.
 We can assume that
 all of the irreducible components (connected components)
 in the Galois image of $U_0$
 are defined over a finite Galois extension field $L'$
 of $\bQ$ which contains $L$.
%
%

 Thus they are already decomposed over $L$.
 Then we see from the descent theory
 that $U_L$ is the Galois orbit
 of the irreducible component $U'_0$ containing the holonomy point.
%
%
%
%
%
%
%
%
%
 Since $(U'_0)_{\clQ}=U_0$ we have $\dim U'_0 = \dim U_0 = n$.
 Therefore $\dim U_L = \dim U'_0 = n$.
 Hence we have $\dim X_0(M) = n$.
\end{proof}

%
%

\begin{lemma}
\label{lem:FromXtocX}
 $\dim X_0(M) = n$ implies
 $\dim \cX_0(M)=n+1$.
\end{lemma}
\begin{proof}
 We can apply the same argument as in \refcor{cor:FromCtoQ}
 for the generic fiber $\cX_0(M)_{\bQ}:=\cX_0(M)\otimes_{\bZ}\bQ$
 as follows.
 Since $\cX_0(M)(\bC)$ is identified with $X(M)_{\Irr}(\bC)$,
 the point set in $\cX_0(M)_{\clQ}:=\cX_0(M)\otimes_{\bZ}\clQ$
 corresponding to the holonomy point in $X(M)_{\Irr}(\bC)$
 is an irreducible component of $\cX_0(M)_{\clQ}$.
 Note that $\dim X(M)_{\Irr}(\bC)_0 = 0$
 by \refthm{thm:Thurston}.
 Since there is an inclusion relation
$$
(\cX_0(M)_{\clQ})_0 \subset (\cX(M)_{\clQ})_0 \;\isom\; X(M)_{\Irr}(\clQ)_0,
$$
 we have $\dim(\cX_0(M)_{\clQ})_0 = n$.
 Thus we have $\dim (\cX_0(M)_{\bQ}) = n$
 by the same argument
 as in the proof of \refcor{cor:FromCtoQ}.

 By \cite{EGAIV-II}, Corollaire (5.6.6)
%
 we have
 $\dim \cX_0(M)_{\red} = \dim \Spec(\bZ) + {\rm tr.deg}(R(\cX_0(M)_{\red})/\bQ)$.
%
%
 Here $R(\cX_0(M)_{\red})$ is the function field of $\cX_0(M)_{\red}$
 and ${\rm tr.deg}(R(\cX_0(M)_{\red})/\bQ)$ is the transcendental degree
 of $R(\cX_0(M)_{\red})$ over $\bQ$.
 Note that
$$
{\rm tr.deg}(R(\cX_0(M)_{\red})/\bQ) = {\rm tr.deg}(R(\cX_0(M)_{\red,\clQ})/\clQ) = {\rm tr.deg}(R(\cX_0(M)_{\red,\bC})/\bC) = \dim (\cX_0(M)_{\bQ}).
$$

 Therefore $\dim \cX_0(M) = n+1$
 since $\dim (\cX_0(M)_{\bQ}) = n$.
\end{proof}

%
%
%

 In what follows
 we assume that
 $M$ is a closed orientable complete hyperbolic $3$-manifold of finite volume.
 Note that $\dim X_0(M) = 0$ and $\dim\cX_0(M) =1$
 by \refcor{cor:FromCtoQ} and \reflem{lem:FromXtocX}.

 Since $\cX_0(M)$ is of finite type over $\bZ$
 ($\fg(M)$ is finitely generated),
 we see that $\cX_0(M)$ is finite over $\bZ$.
 Thus the reduced scheme $\cX_0(M)_{\red}$ is equal to
 the scheme $\Spec \cO$ for some integral domain of finite rank over $\bZ$.
 Let $K$ be the quotient field of $\cO$.
 Note that $K$ is a finite extension field of $\bQ$
 and $\cO$ is contained in $\cO_K$, the ring of integers of $K$.

\begin{lemma}
 $\cO$ is an order of $K$.
\end{lemma}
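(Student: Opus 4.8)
The plan is to reduce the statement to its essential content. Recall that $\cO$ is an order of $K$ precisely when it is a subring of $\cO_K$ that is finitely generated as a $\bZ$-module and is of finite index in $\cO_K$; the last condition is equivalent to $\cO$ spanning $K$ over $\bQ$, i.e. to $\cO\otimes_{\bZ}\bQ=K$. The first two requirements are already in hand: we have $\cO\subseteq\cO_K$, and since $\cX_0(M)$ is finite over $\bZ$ the ring $\cO$ is finitely generated as a $\bZ$-module. Thus the only point left to prove is that $\cO$ has full rank, namely $\cO\otimes_{\bZ}\bQ=K$.

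First I would note that $\cO$ is torsion-free over $\bZ$: it is a subring of the characteristic $0$ field $K$, so $\bZ\hookrightarrow\cO$. Hence the canonical map $\cO\otimes_{\bZ}\bQ\to K$ is injective and identifies $\cO\otimes_{\bZ}\bQ$ with the localization $S^{-1}\cO$ at $S=\bZ\smallsetminus\{0\}$, which is a subring of $K=\mathrm{Frac}(\cO)$ containing $\cO$.

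The heart of the argument is then to show this subring is all of $K$. Because $\cO$ is finitely generated over $\bZ$, the ring $\cO\otimes_{\bZ}\bQ$ is a finite-dimensional $\bQ$-algebra; being a subring of the field $K$, it is an integral domain; and a finite-dimensional $\bQ$-algebra that is an integral domain is a field. So $\cO\otimes_{\bZ}\bQ$ is a subfield of $K$ containing $\cO$, and therefore contains $\mathrm{Frac}(\cO)=K$; hence $\cO\otimes_{\bZ}\bQ=K$. This yields $\mathrm{rank}_{\bZ}\cO=[K:\bQ]$, so the inclusion $\cO\subseteq\cO_K$ is of finite index, and $\cO$ is an order of $K$.

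I expect no serious obstacle here; the one place that needs care is excluding the degenerate possibility that $\cO$, although a finitely generated $\bZ$-module, spans only a proper subfield of $K$ (so that $\cO\otimes_{\bZ}\bQ$ would be strictly smaller than $K$). The cleanest way to rule this out is the observation above that a finite-dimensional $\bQ$-domain is automatically a field. Alternatively one can argue directly by clearing denominators: any $\alpha\in K$ has the form $a/b$ with $a,b\in\cO$ and $b\neq0$, and since $b$ is integral over $\bZ$ its minimal polynomial has nonzero constant term, which expresses $1/b$, and hence $\alpha$, as an element of $\cO\otimes_{\bZ}\bQ$; this again gives $\cO\otimes_{\bZ}\bQ=K$.
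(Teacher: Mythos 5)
Your proposal is correct and follows essentially the same route as the paper: both arguments reduce to showing that $\cO\otimes_{\bZ}\bQ$ is a field, hence equal to $\mathrm{Frac}(\cO)=K$, so that $\cO$ has full rank and finite index in $\cO_K$. The only difference is the justification of the key step — the paper concludes that $\cO\otimes_{\bZ}\bQ$ is a field from $\dim\Spec(\cO\otimes\bQ)=0$ (a zero-dimensional domain is a field), carried over from the preceding lemma, whereas you use the module-finiteness of $\cO$ over $\bZ$ (a finite-dimensional $\bQ$-algebra that is a domain is a field); both inputs are available from the setup, so this is a cosmetic variation.
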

\begin{proof}
 As we have seen in the proof of the previous lemma,
 we know that $\dim (\cX_0(M)\otimes\bQ) = 0$.
 Therefore we have $\dim (\cX_0(M)_{\red}\otimes\bQ) = 0$
 for $\cX_0(M)_{\red} = \Spec\cO$.
 Hence we have $\dim \Spec(\cO\otimes\bQ) = 0$.
 Note that $\cO\otimes\bQ$ is an integral domain contained in
 $\cO_K \otimes\bQ = K$.
 Therefore $\cO\otimes\bQ$ is a field.
 Note that this is the minimal field containing $\cO$,
 which is equal to $K$.
 Thus $\cO$ is an order of $K$.
\end{proof}

\noindent 
 Since $\cO$ is an order of $K$,
 $[\cO_K : \cO]$ is of finite index.
 Note that
 there is a bijective correspondence between
 the prime ideals of $\cO$ and those of $\cO_K$
 lying on prime numbers $p \not\mid [\cO_K : \cO]$
 (\Cf \cite{Stevenhagen}).
 Hence $\zeta(\Spec(\cO),s)$ is equal to
 $\zeta(\Spec(\cO_K),s)$ up to rational functions in $p^{-s}$
 for $p \mid [\cO_K : \cO]$.
 Therefore we have the equality
$$
 \zeta(\cX_0(M),s) = \zeta(\cX_0(M)_{\red},s) = \zeta(K,s)
$$
 up to rational functions.

%
%
%
%

 Now we would like to determine the number field $K$.
 Suppose that the holonomy representation
 $\rho_M : \fg(M) \ra \PSL_2(L)$
 is defined over a number field $L$
 (for which we can take the trace field $K_M$ or a quadratic extension of $K_M$).
 There is a commutative diagram
$$
\begin{CD}
     \Spec(L) @>>> \cX_0(M)\otimes\bQ = \Spec(K)  \\
       @| @VVV \\
     \Spec(L) @>p_{\rho}>>  \cX_0(M)=\Spec(\cO).
\end{CD}
$$
\noindent 
 Thus it is clear that $K \subset L$.

%
%
%
\begin{lemma}
 We have $K_M \subset K \subset L$.
\end{lemma}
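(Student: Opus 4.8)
The plan is as follows. The inclusion $K \subset L$ has already been extracted from the commutative diagram preceding the statement, so the genuine content of the lemma is the opposite-direction inclusion $K_M \subset K$. The strategy is to exploit that, by Procesi's construction, the coordinate ring of $\cX_0(M)$ is assembled out of trace functions, and that evaluating these trace functions at the holonomy point recovers exactly the traces of the holonomy representation, which by definition generate $K_M$.

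First I would make the evaluation at the holonomy point precise. The fixed lift $\rho$ of $\rho_M$ is an absolutely irreducible representation $\fg(M) \ra \SL_2(L) \subset \rM_2(L)$, and $\rM_2(L)$ is a (split) Azumaya algebra of degree $2$ over $L$ in which $\rho(\fg(M))$ sits inside the reduced-norm-one subgroup $\SL_2(L)$. Hence $\rho$ determines an $L$-valued point of the scheme $\bT'_2(\fg(M))$ representing $\cF'_{\fg(M),2}$ lying on the canonical component $\cX_0(M)$, namely the homomorphism $p_{\rho}^{*} : \cO \ra L$ appearing in the diagram. By the explicit description of Procesi's correspondence recalled after \refthm{thm:Procesi-GL}, the restriction of the associated $f_{\rho}$ to the trace subring carries each trace generator $\tau_g := \Tr(\rho'_{2,\fg(M)}(g))$ to the reduced trace $\Tr(\rho(g))$ of its image; thus $p_{\rho}^{*}$ sends the image $\bar{\tau}_g$ of $\tau_g$ in $\cO$ to $\Tr(\rho(g)) \in L$.

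Next I would observe that each trace function does land in $\cO$. Since $\cX_0(M)$ is a closed subscheme of $\bT'_2(\fg(M))$, which is covered by the spectra of localizations of the trace ring $T'_2(\fg(M))$ at discriminants $\bd$ (each a unit at the holonomy point), and since $\cX_0(M) = \Spec \cO$ is affine, the element $\bar{\tau}_g$ is a well-defined global function, i.e.\ $\bar{\tau}_g \in \cO$. Because $L$ is a $\bQ$-algebra, the map $p_{\rho}^{*}$ factors as $\cO \hookrightarrow \cO \otimes_{\bZ} \bQ = K \hookrightarrow L$, the last arrow being the embedding furnished by the diagram; hence $p_{\rho}^{*}(\cO) \subseteq K$ inside $L$. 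Therefore $\Tr(\rho(g)) = p_{\rho}^{*}(\bar{\tau}_g) \in K$ for every $g \in \fg(M)$, and as $K_M$ is by definition generated over $\bQ$ by exactly these traces we obtain $K_M \subseteq K$. Combined with $K \subseteq L$ this yields $K_M \subset K \subset L$.

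The main obstacle I anticipate is the bookkeeping in the second and third steps, rather than any deep difficulty: one must check that the trace generators $\tau_g$ genuinely descend (through the discriminant localization by $\bd$ and the passage to the reduced structure on the canonical component) to elements of the order $\cO$, and that the Procesi evaluation $p_{\rho}^{*}$ really sends $\bar{\tau}_g$ to $\Tr(\rho(g))$ and not to some quantity depending on the chosen splitting or conjugacy representative. Both points are resolved by the base-change compatibility of the reduced trace recorded among the Azumaya-algebra properties above: for any splitting $A \hookrightarrow C$ the reduced trace $\Tr_{S/A}$ becomes the ordinary matrix trace on $\rM_d(C)$, so the trace functions are intrinsic and their values at $p_{\rho}$ are independent of the lift and of the conjugacy class. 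This makes the identification $p_{\rho}^{*}(\bar{\tau}_g) = \Tr(\rho(g))$ unambiguous and completes the argument.
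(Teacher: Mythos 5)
Your proposal is correct and follows essentially the same route as the paper: both arguments identify the traces of the holonomy lift as images, under the evaluation map at the holonomy point, of elements of the trace ring underlying $\cX_0(M)$, note that this evaluation factors through $\cO \otimes_{\bZ}\bQ = K$, and conclude $K_M \subset K$ since $K_M$ is generated by those traces. The paper phrases this via the factorization of $f_M : T_{\univ}(M) \ra L$ through $K$, while you phrase it via $p_{\rho}^{*} : \cO \ra L$; these are the same argument with the bookkeeping arranged slightly differently.
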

\begin{proof}
 Let $A_{\univ}(M):= A'_2(\fg(M))$ be the universal representation ring of
 the $\SL_2$-representations of $\fg(M)$
 and $\rho_{\univ}: \fg(M) \ra \SL_2(A_{\univ}(M))$
 the associated universal representation
 as in the previous section.
 Then the (lift of) holonomy representation $\rho : \fg(M) \ra \SL_2(L)$
 factors through $\SL_2(A_{\univ}(M)) \ra \SL_2(L)$
 induced by the homomorphism $f_M : A_{\univ}(M) \ra L$.
 Let $T_{\univ}(M):=T'_2(\fg(M))$ be the subring of $A_{\univ}(M)$
 generated by the trace of $\rho_{\univ}$.
 Note that $f_M(\Tr\rho_{\univ}) = \Tr \rho$.
 Therefore we see that $\bQ(f_M(T_{\univ}(M))) = K_M$.

$$
\begin{CD}
     \Spec(L) @>>> \cX_0(M)\otimes\bQ = \Spec(\cO\otimes\bQ = K) @. \\
       @| @VVV @.\\
     \Spec(L) @>p_{\rho}>>  \cX_0(M)=\Spec(\cO) @>\subset>> \Spec(T_{\univ}(M)).
\end{CD}
$$
\noindent 
 The scheme $\cX(M)$ is an open subscheme of $\Spec(T_{\univ}(M))$
 (which is of the form $\cup \Spec(T_{\univ}(M)[1/\bd])$).
 Since there is a commutative diagram as above,
 the homomorphism $f_M : T_{\univ}(M) \ra L$
 corresponding to the lift $\rho$ of the holonomy representation
 factors through $K \ra L$.
 Hence we see that $f_M(T_{\univ}(M)) = \Tr \rho \subset K \subset L$.
 Thus we have $K_M \subset K \subset L$.
\end{proof}

\begin{lemma}
\label{lem:AzumayaTraceField}
 We can choose $L = K_M$.
\end{lemma}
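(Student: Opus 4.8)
The plan is to exploit Procesi's moduli interpretation to realize the holonomy representation over $K_M$ \emph{directly}, sidestepping the delicate question (flagged in the Remark above) of whether it can be conjugated into $\SL_2(K_M)$. The essential point is that $\cX_0(M)$ parametrizes absolutely irreducible representations into Azumaya algebras of degree $2$, not merely into $\rM_2$, and over the trace field such an Azumaya algebra is always available even when the associated quaternion algebra is a division algebra.

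First I would fix a lift $\rho : \fg(M) \to \SL_2(\bC)$ of the holonomy representation; since the holonomy is discrete and faithful, $\rho$ is irreducible, and its trace field is $K_M$ by definition. Let $S$ be the $K_M$-subalgebra of $\rM_2(\bC)$ generated by $\rho(\fg(M))$. Irreducibility means $\Im(\rho)$ generates $\rM_2(\bC)$ as a $\bC$-module, so $S \otimes_{K_M} \bC = \rM_2(\bC)$, which is an Azumaya algebra of degree $2$ over $\bC$. Because the traces and determinants of elements of $\rho(\fg(M))$ lie in $K_M$, the Cayley--Hamilton relation $x^2 - \Tr(x)x + \det(x) = 0$ lets one reduce every product back to the $K_M$-span of $\{1, \rho(g), \rho(h), \rho(g)\rho(h)\}$ for a pair $g,h$ with $\{1,\rho(g),\rho(h),\rho(g)\rho(h)\}$ linearly independent over $\bC$; hence $S$ is a finitely generated $K_M$-module, of rank $4$. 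As $K_M \hr \bC$ is faithfully flat, the converse part of the base-change property of Azumaya algebras then shows that $S$ is itself an Azumaya algebra of degree $2$ over $K_M$, with centre exactly $K_M$.

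Next I would view $\rho$ as a ring homomorphism $\fg(M) \to S$ and check it lies in $\cF'_{\fg(M),2}(K_M)$: it is absolutely irreducible in Procesi's sense because $S$ is generated by $\Im(\rho)$ as a $K_M$-module by construction, and $\rho(\fg(M)) \subset S^1$ because the reduced norm of each $\rho(g)$ becomes $\det\rho(g)=1$ after a splitting. By \refthm{thm:Procesi-SL} the functor $\cF'_{\fg(M),2}$ is represented by $\cX(M) = \bT'_2(\fg(M))$, so $\rho$ corresponds to a $K_M$-rational point $\Spec(K_M) \to \cX(M)$. Base-changing to $\bC$ recovers the holonomy character, which lies on $\cX_0(M)(\bC)$; hence the underlying point lies on the canonical component, and the $K_M$-point factors as a $\bQ$-morphism $\Spec(K_M) \to \cX_0(M)\otimes\bQ = \Spec(K)$, that is, an inclusion $K \hr K_M$. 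Combined with the inclusion $K_M \subset K$ from the previous lemma, this gives $K = K_M$, so the field of definition $L$ may be taken to be $K_M$.

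The main obstacle is the second step: verifying that the $K_M$-algebra $S$ generated by the image is genuinely an Azumaya algebra of degree $2$ over $K_M$, equivalently a quaternion algebra with centre precisely $K_M$. Finite generation of $S$ as a $K_M$-module is the crux, and it rests on the trace-field property together with Cayley--Hamilton; once that is established, centrality and the Azumaya condition follow formally by faithfully flat descent from the split algebra $S\otimes_{K_M}\bC = \rM_2(\bC)$, and the remaining steps are straightforward applications of Procesi's representability.
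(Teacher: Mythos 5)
Your proposal is correct and follows essentially the same route as the paper: realize $\rho$ as an absolutely irreducible representation into the Azumaya algebra $K_M[\Im(\rho)]$ over the trace field and invoke Procesi's representability to get a $K_M$-rational point of $\cX_0(M)$. The only difference is that you prove directly, via Cayley--Hamilton and faithfully flat descent, the fact that $K_M[\Im(\rho)]$ is an Azumaya algebra of degree $2$ over $K_M$, whereas the paper cites this from Taguchi's Proposition 2.7.
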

\begin{proof}
 Let $\rho : \fg(M) \ra \SL_2(\bC)$ be a lift of
 the holonomy representation of $M$.
 Consider the associated absolutely irreducible representation
 $\bZ[\rho] : \bZ[\fg(M)] \ra A$,
 where $A$ is an Azumaya algebra over $\bC$.
 Then we see by \cite{ModuliGalTaguchi}, Proposition 2.7
 that $K_M[\Im(\rho)]$ is an Azumaya algebra
 over $K_M$ and its base change $K_M[\Im(\rho)]\otimes_{K_M}\bC$ is isomorphic to the Azumaya algebra $A$ over $\bC$.
 This means that $\bZ[\rho] : \bZ[\fg(M)] \ra A$ factors
 through the Azumaya algebra $K_M[\Im(\rho)]$.
 Hence $\bZ[\rho] : \bZ[\fg(M)] \ra K_M[\Im(\rho)]$
 is defined over the trace field $K_M$.
 This implies that
 $\rho : \fg(M) \ra \SL_2(\bC)$ defines a
 $K_M$-rational point $\Spec K_M \ra \cX_0(M)$.
 Thus we can take $K_M$ as $L$.
\end{proof}

\begin{remark}
\label{remark:holonomy rep}
 In general
 the holonomy representation of
 an orientable hyperbolic 3-manifold $M$ of finite volume
 is defined over a finite number field.
 Namely there is an (at most) quadratic extension field $L$ of the trace field $K_M$
 such that we can take $\rho_M : \fg(M) \ra \PSL_2(L)$
 up to conjugacy (\Cf \cite{BookMaRe}, Corollary 3.2.4.).
 If $M$ is non-compact, namely if $M$ has a cusp
 then we can take $\rho_M : \fg(M) \ra \PSL_2(K_M)$
 (\Cf \cite{BookMaRe}, Theorem 4.2.3).
\end{remark}

\noindent 
 Therefore we have proved the following:

\begin{theorem}
\label{Thm:HWzetaProcesiSch}
 Let $M$ be an orientable closed hyperbolic $3$-manifold
 of finite volume.
 Let $\cX_0(M)$ be an irreducible component
 of the moduli scheme $\cX(M)$
 containing the point corresponding to a
 lift of the holonomy representation $\rho_M$.
 Then we have the equality
 $\zeta(\cX_0(M),s) = \zeta(K_M,s)$
 up to rational functions in $p^{-s}$
 for finitely many prime numbers $p$.
\end{theorem}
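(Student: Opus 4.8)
The plan is to compute $\zeta(\cX_0(M),s)$ by identifying the reduction of $\cX_0(M)$ with the spectrum of an order in a number field and then pinning down that field as the trace field $K_M$. The entire argument is driven by a single input: since $M$ is closed it has no cusps, so Thurston's theorem forces $\dim X(M)(\bC)_0 = 0$, and every dimension estimate below flows from this.

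First I would collect the structural lemmas. From $\dim X(M)(\bC)_0 = 0$ the preceding lemmas give $\dim\cX_0(M)=1$, and since $\fg(M)$ is finitely generated the scheme $\cX_0(M)$ is of finite type, hence finite, over $\bZ$; thus $\cX_0(M)_{\red}=\Spec\cO$ for an integral domain $\cO$ finite over $\bZ$ with fraction field a finite extension $K/\bQ$ and $\cO\subseteq\cO_K$. The lemma that $\cO$ is an order of $K$ then lets me invoke the proposition on orders: because $[\cO_K:\cO]$ is finite, the local factors $Z(\Spec\cO,p,T)$ and $Z(\Spec\cO_K,p,T)$ coincide for every $p\nmid[\cO_K:\cO]$, and since $\zeta(\cX_0(M),s)=\zeta(\cX_0(M)_{\red},s)=\zeta(\Spec\cO,s)$ I obtain $\zeta(\cX_0(M),s)=\zeta(K,s)$ up to rational functions in $p^{-s}$ at the finitely many primes dividing $[\cO_K:\cO]$.

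It then remains only to prove $K=K_M$, which I would do by a sandwich. On the lower side, the traces of the universal representation specialize into $\cO$, so the field generated over $\bQ$ by those traces, namely $K_M$, is contained in $K$; this is the content of the inclusion lemma $K_M\subseteq K$. On the upper side, a lift of the holonomy representation is defined over a field $L$ and yields a point $\Spec L\ra\cX_0(M)$, forcing $K\subseteq L$. Feeding in \reflem{lem:AzumayaTraceField}, which permits the choice $L=K_M$, collapses $K_M\subseteq K\subseteq L=K_M$ to $K=K_M$. Substituting into the previous paragraph gives $\zeta(\cX_0(M),s)=\zeta(K_M,s)$ up to finitely many local factors, as required.

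The main obstacle is the upper bound $K\subseteq K_M$, equivalently the realizability of the holonomy representation over the trace field itself. The lower bound is formal once the traces are seen to lie in $\cO$, but \refrem{remark:holonomy rep} warns that for a general finite-volume manifold the holonomy is only guaranteed over an at most quadratic extension of $K_M$, so some genuine input is needed to descend to $K_M$. This is exactly what \reflem{lem:AzumayaTraceField} supplies: viewing $\bZ[\rho]$ as an absolutely irreducible representation into an Azumaya algebra $A$ over $\bC$, one checks that $K_M[\Im(\rho)]$ is already an Azumaya algebra over $K_M$ with $K_M[\Im(\rho)]\otimes_{K_M}\bC\isom A$, whence $\bZ[\rho]$ factors through it and defines a genuine $K_M$-rational point of $\cX_0(M)$. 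Establishing this descent is the crux, and the rest is the bookkeeping assembled above.
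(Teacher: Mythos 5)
Your proposal follows essentially the same route as the paper: deduce $\dim X(M)(\bC)_0=0$ from Thurston's theorem for the closed case, identify $\cX_0(M)_{\red}$ with $\Spec\cO$ for an order $\cO$ in a number field $K$ so that the order proposition gives $\zeta(\cX_0(M),s)=\zeta(K,s)$ up to finitely many local factors, and then pin down $K=K_M$ by the sandwich $K_M\subseteq K\subseteq L$ combined with \reflem{lem:AzumayaTraceField} to take $L=K_M$. You have also correctly identified the Azumaya-algebra descent to the trace field as the genuine crux, so there is nothing of substance to add.
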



%
%

 Let $X(M)$ be the $\SL_2$-character variety of $M$ over $\bQ$.
 Let $X_0(M)$ be an irreducible component of $X(M)$
 containing the point corresponding to
 the holonomy character of $M$.
 Since $\dim X_0(M)= 0$,
 the (reduced) scheme $X_0(M)$ is written as $\Spec(K')$,
 where $K'$ is a finite extension field of $\bQ$.

 Let $\rho : \fg(M) \ra \SL_2(L)$
 be a lift of the holonomy representation of $M$.
 Here $L$ is the trace field $K_M$ or a quadratic extension field of $K_M$.
 Since the holonomy character $\chi_M = \Tr \rho$ is an $L$-rational point
 of $X_0(M)$,
 we see that $K'$ is regarded as a subfield of $L$.
 Therefore we have
 $\zeta(X_0(M),s) = \zeta(K',s)$
 for $K' \subset L$.

%
%

 As we have seen,
 $\cX_0(M)_{\red} = \Spec(\cO)$,
 where $\cO$ is an order of $K_M$.
 Therefore
 $\cX_0(M)_{\red}\otimes \bQ = \Spec(K_M)$.
 Note that the holonomy character $\chi_M$ defines
 a common zero of the minimal polynomials
 of $K_M$ and $K'$.
 Therefore $K_M$ and $K'$ are isomorphic each other.
 Hence we obtain the following:

\begin{lemma}
\label{lem:X0isGenFiberOFcX0}
 $\cX_0(M)_{\red}\otimes_{\bZ}\bQ$ is isomorphic to $X_0(M)$.
\end{lemma}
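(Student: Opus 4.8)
\emph{Proof proposal.} The plan is to assemble the structural results already established for the two canonical components and to match their residue fields through the holonomy character. By \reflem{lem:FromCtoQ} the component $X_0(M)$ is zero-dimensional, and since it is a component of the reduced scheme $X(M)$ it is itself reduced, so $X_0(M) = \Spec(K')$ for a number field $K'$. On the other side, the preceding lemmas give $\cX_0(M)_{\red} = \Spec(\cO)$ with $\cO$ an order of the trace field $K_M$, whence $\cX_0(M)_{\red}\otimes_{\bZ}\bQ = \Spec(K_M)$. Thus both schemes in question are spectra of single number fields, and the lemma reduces to producing a $\bQ$-isomorphism $K' \isom K_M$ compatible with the holonomy data.

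First I would construct the comparison morphism. The reduced trace of the universal absolutely irreducible representation identifies $\mathbb{T}'_2(\fg(M))(\bC)$ with the irreducible locus $X_{\Irr}(M)(\bC)$ of the character variety (the corollary to \refthm{thm:Procesi-SL}), and on coordinate rings this is induced by sending each trace coordinate of $X(M)$ to the corresponding reduced trace $\Tr\rho_{\univ}(g)$ in $T'_2(\fg(M))$. This yields a natural morphism $\cX(M)\otimes\bQ \to X(M)$ over $\bQ$ carrying the holonomy point to the holonomy character, hence restricting to a morphism of canonical components $\cX_0(M)_{\red}\otimes\bQ \to X_0(M)$, that is, a field homomorphism $K' \to K_M$.

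Next I would identify both fields with the trace field. By definition $K_M = \bQ(\Tr\rho)$ for a lift $\rho$ of the holonomy representation; and the residue field $K'$ of $X_0(M)$ at the holonomy character $\chi_M = \Tr\rho$ is generated over $\bQ$ by the trace coordinate functions evaluated at $\chi_M$, which are exactly the traces $\Tr\rho(g)$. Hence $K' = \bQ(\Tr\rho) = K_M$ as subfields of $L$, the homomorphism above carries $\chi_M$ to $\chi_M$ and so is an isomorphism, and this gives $\cX_0(M)_{\red}\otimes_{\bZ}\bQ \isom X_0(M)$.

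The step I expect to be the main obstacle is the rationality issue flagged in the remark following \reflem{lem:FromCtoQ}: a priori one only knows the two fields are abstractly isomorphic, and one must ensure the matching is defined over $\bQ$ rather than over some larger field. This is precisely what \reflem{lem:AzumayaTraceField} supplies, since it produces a genuine $K_M$-rational point $\Spec K_M \to \cX_0(M)$ from the holonomy representation; combined with the fact that $\chi_M$ visibly generates $K_M$ over $\bQ$, it pins down the identification $K' = K_M$ canonically and completes the argument.
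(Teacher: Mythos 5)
Your proposal is correct and follows essentially the same route as the paper: identify $X_0(M)=\Spec K'$ via \reflem{lem:FromCtoQ}, identify $\cX_0(M)_{\red}\otimes_{\bZ}\bQ=\Spec K_M$ via the order lemma together with \reflem{lem:AzumayaTraceField}, and then match $K'$ with $K_M$ through the holonomy character. Your justification that $K'\isom K_M$ (the embedding $K'\hr\bC$ given by $\chi_M$ has image generated by the trace coordinates, hence equal to $K_M$) is in fact a slightly more explicit rendering of the paper's remark that $\chi_M$ is a common zero of the minimal polynomials of $K'$ and $K_M$.
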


\noindent 
 This means that $\cX_0(M)_{\red}$ is a model of
 $X_0(M)$.
 Hence we have
$$
 \zeta(X_0(M),s) = \zeta(\cX_0(M),s).
$$
\noindent 
 Consider another lift of the holonomy representation $\rho_M$
 and let $X_0(M)' \subset X(M)$ be the canonical component
 of that lift.
 By the above argument
 both $X_0(M)$ and $X_0(M)'$ are isomorphic to $\Spec K_M$.
 Hence the $K_M$-rational points defined by the characters
 of the lifts of the holonomy representation of $M$
 are conjugate
 (more precisely,
 they are $\Gal(K_M/\InvKM)$-conjugate.
 See \S 3.2, especially \reflem{lem:KMInvKM}).
 It implies that $X_0(M)(\bC) = X_0(M)'(\bC)$ in $X(M)(\bC)$.
 Therefore we have $X_0(M) = X_0(M)'$ in $X(M)$.
 This means that
 the canonical component $X_0(M)$ does not depend on the choice
 of a lift of the holonomy representation $\rho_M$.
 Thus we have the following corollary.

\begin{corollary}
\label{Cor:HWzetaCharVar}
 Let $M$ be an orientable closed hyperbolic $3$-manifold
 of finite volume.
 Then the canonical component
 $X_0(M)$ is unique as a closed subscheme of $X(M)$,
 which does not depend on the choice
 of a lift of the holonomy representation,
 and $X_0(M)$ is isomorphic to
 the spectrum $\Spec K_M$ of
 the trace field $K_M$.
 Therefore
 the Hasse-Weil zeta function $\zeta(X_0(M),s)$
 is equal to the Dedekind zeta function
 $\zeta(K_M,s)$ of
 the trace field $K_M$.
\end{corollary}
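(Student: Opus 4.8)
The plan is to assemble the facts established in the preceding discussion rather than to recompute anything. Since $M$ is closed it has no cusps, so Thurston's theorem gives $\dim X(M)(\bC)_0 = 0$, and \reflem{lem:FromCtoQ} then forces $\dim X_0(M) = 0$. Hence the reduced scheme $X_0(M)$ is of the form $\Spec K'$ for a finite extension field $K'$ of $\bQ$, and everything reduces to identifying $K'$ and comparing zeta functions.

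First I would identify $K'$ with the trace field $K_M$. By \reflem{lem:X0isGenFiberOFcX0} we have $\cX_0(M)_{\red}\otimes_{\bZ}\bQ \isom X_0(M)$, and since $\cX_0(M)_{\red} = \Spec\cO$ for an order $\cO$ of $K_M$ we know $\cX_0(M)_{\red}\otimes\bQ = \Spec K_M$. The holonomy character $\chi_M = \Tr\rho$ is a rational point of $X_0(M)$ that is simultaneously a common zero of the minimal polynomials of $K'$ and of $K_M$; comparing these forces $K' \isom K_M$, so $X_0(M) \isom \Spec K_M$.

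Next I would prove the independence of $X_0(M)$ from the chosen lift. Given a second lift of $\rho_M$ with canonical component $X_0(M)'$, the argument just sketched shows $X_0(M)' \isom \Spec K_M$ as well. The essential input is that the $K_M$-rational points of $X(M)$ cut out by the characters of the various lifts form a single $\Gal(K_M/\InvKM)$-orbit, which is the content of the action studied in \S3.2 (see \reflem{lem:KMInvKM}). This Galois conjugacy yields $X_0(M)(\bC) = X_0(M)'(\bC)$ inside $X(M)(\bC)$, and hence the equality $X_0(M) = X_0(M)'$ of reduced closed subschemes of $X(M)$, giving both the uniqueness and the independence from the lift. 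I expect this to be the main obstacle, since it is the only place where genuine input beyond the dimension computations is needed, namely the explicit description in \S3.2 of how the lifts are permuted by $\Gal(K_M/\InvKM)$.

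Finally, the zeta function statement follows formally. Since $\cX_0(M)_{\red}$ is a finite $\bZ$-model whose generic fibre recovers $X_0(M)$ by \reflem{lem:X0isGenFiberOFcX0}, the definition of the Hasse--Weil zeta function gives $\zeta(X_0(M),s) = \zeta(\cX_0(M),s)$, and \refthm{Thm:HWzetaProcesiSch} identifies the right-hand side with $\zeta(K_M,s)$ up to rational functions in $p^{-s}$ for finitely many primes $p$. Equivalently, from $X_0(M) \isom \Spec K_M$ one may compute directly with the model $\Spec\cO_{K_M}$, whose zeta function is precisely $\zeta(K_M,s)$.
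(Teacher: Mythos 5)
Your proposal is correct and follows essentially the same route as the paper: reduce to $\dim X_0(M)=0$ via Thurston and \reflem{lem:FromCtoQ}, identify $K'$ with $K_M$ by the common-zero-of-minimal-polynomials argument through the integral model $\cX_0(M)_{\red}=\Spec\cO$, deduce independence of the lift from the $\Gal(K_M/\InvKM)$-conjugacy of the lifted characters (\reflem{lem:KMInvKM}), and read off the zeta function from \refthm{Thm:HWzetaProcesiSch}. The only cosmetic difference is that you invoke \reflem{lem:X0isGenFiberOFcX0} as an input to the identification $K'\isom K_M$, whereas in the paper that lemma is stated as a consequence of it; the underlying argument is identical.
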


\begin{remark}
 As we have defined in \refsubsec{subsec:HWzetaCharVar},
 the Hasse-Weil zeta function of an algebraic set over $\bQ$ in this paper
 is well-defined up to rational functions in $p^{-s}$
 for finitely many prime numbers $p$.
 However the canonical component of the $\SL_2$
 ($\PSL_2$)-character variety of
 a closed hyperbolic $3$-manifold can be written
 as the spectrum of a number field
 (namely it is a smooth projective variety of
 dimension $0$).
 Therefore we can take the unique maximal order,
 the ring of integers
 and the Hasse-Weil zeta function of the ring of
 integers is exactly the Dedekind zeta function
 of the number field.
 Thus we do not have to consider
 any ambiguity of rational functions
 in the descriptions of the Hasse-Weil zeta functions
 of $\SL_2$ ($\PSL_2$)-character varieties
 of the closed hyperbolic $3$-manifolds.
\end{remark}


%
%

%
%
%


%
%
\subsection{Hasse-Weil zeta functions of $\PSL_2$-character varieties}

 Let $M$ be a closed orientable hyperbolic $3$-manifold
 of finite volume.
 Let $C_2:=\{ \pm 1 \}$ be the group of order $2$
 and
 let 
 $H^1(\fg(M),C_2) = \Hom(\fg(M),C_2)$.
 Then we can consider the group action
 of $H^1(\fg(M),C_2)$ on the canonical
 component $X_0(M)$ of $X(M)$ as follows.

 Let $A$ be a commutative ring.
 For any element $\epsilon \in H^1(\fg(M),C_2)$
 and $\rho : \bZ[\fg(M)]\ra S \in \cX(M)(A)$
 ($S$ is an Azumaya algebra of degree $2$ over $A$)
 define the action of $H^1(\fg(M),C_2)$
 on $\cX(M)(A)$ by
$$
 \epsilon \cdot \rho (g):= \epsilon(g)\rho(g), \quad g \in \fg(M).
$$
\noindent 
 Note that for a ring homomorphism $f:A\ra B$
 this action is compatible with
 the morphism $f_{\ast} : \cX(M)(A)\ra\cX(M)(B)$.
 Hence it naturally
 induces the group action of $H^1(\fg(M),C_2)$
 on the scheme $\cX(M)$, and on $\cX(M)\otimes_{\bZ}\bQ$.

 Now we know by \reflem{lem:X0isGenFiberOFcX0} that
 for any lift of the holonomy representation
 $\rho_M : \fg(M)\ra \PSL_2(\bC)$
 the generic fiber $\cX_0(M)\otimes_{\bZ}\bQ$ of
 a canonical component $\cX_0(M)$ of $\cX(M)$
 is isomorphic to $X_0(M)\isom\Spec K_M$.
 Therefore the action of $H^1(\fg(M),C_2)$ on $\cX(M)$
 induces the action on the canonical component
 $X_0(M) \isom \cX_0(M)\otimes\bQ$
 of the $\SL_2$-character variety $X(M)$.

 Note that the group $H^1(\fg(M),C_2)$ is a finite group
 since $\fg(M)$ is finitely generated.
 Hence there exists a quotient (reduced) scheme
%
%
 $\ol{X}_0(M):=X_0(M)/H^1(\fg(M),C_2)$
 of finite type over $\bQ$.
 Since there is a surjection $X_0(M) \ra \ol{X}_0(M)$
 we see that the scheme $\ol{X}_0(M)$ has
 dimension $0$.
 Thus $\ol{X}_0(M)$ is written as $\Spec K'$,
 where $K' \subset K_M$ is a finite extension
 field of $\bQ$.
 We shall prove that $K'$ is isomorphic to
 the invariant trace field $\InvKM$.

 For any $\epsilon \in H^1(\fg(M),C_2)$
 the associated isomorphism of $X_0(M)=\Spec K_M$
 is induced by the $\bQ$-algebra isomorphism
 defined by
 $\chi_{\rho}(g) \mapsto \epsilon(g)\chi_{\rho}(g)$,
 where $\rho :\fg(M)\ra \SL_2(\bC)$ is any (fixed) lift
 of the holonomy representation $\rho_M$.
 Since the invariant trace field $\InvKM$ is generated by
 the elements $\chi_{\rho}(g^2)$ for $g\in\fg(M)$,
 we see that $\epsilon$ is identity
 on the subfield $\InvKM$.
 Therefore the induced morphism
 $X_0(M)=\Spec K_M \ra \Spec (\InvKM)$
 is $H^1(\fg(M),C_2)$-invariant morphism.
 Thus there exists a unique morphism $\ol{X}_0(M) \ra \Spec(\InvKM)$
 such that the composite morphism
 $X_0(M)=\Spec K_M \ra \ol{X}_0(M) \ra \Spec (\InvKM)$
 is equal to the natural morphism
 $X_0(M) \ra \Spec (\InvKM)$.
 Hence we have the inclusion relation
 $\InvKM \subset K' \subset K_M$.

\begin{lemma}
\label{lem:KMInvKM}
 Let $M$ be an orientable closed hyperbolic $3$-manifold
 of finite volume,
 $K_M$ the trace field and
 $\InvKM$  the invariant trace field.
 Then $K_M$ is an elementary abelian extension field
 of $\InvKM$ and its Galois group $\Gal(K_M/\InvKM)$
 is isomorphic to $H^1(\fg(M),C_2)$.
 If the finite group $H^1(\fg(M),C_2)$ is written as
$$
 H^1(\fg(M),C_2) \isom
 \fg(M)/\fg(M)^2 = \lan \ol{g}_1,\cdots,\ol{g}_r \ran,
$$
 then the trace field is expressed as
$$
 K_M = \InvKM(\chi_{\rho}(g_1),\cdots,\chi_{\rho}(g_r)),
$$
 where $\rho$ is a lift of the holonomy representation of $M$.
\end{lemma}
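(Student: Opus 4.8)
The plan is to identify $\Gal(K_M/\InvKM)$ with the image of $H^1(\fg(M),C_2)$ under the action already constructed on $X_0(M)\isom\Spec K_M$, and to pin down the degree $[K_M:\InvKM]$ by a two-sided estimate. Write $\fg=\fg(M)$ and, for a fixed lift $\rho$ of the holonomy representation, set $t_g:=\chi_\rho(g)=\Tr\rho(g)$; recall the $\SL_2$-identity $t_g^2=\chi_\rho(g^2)+2$, so that $t_g^2\in\InvKM$ for every $g$. The one geometric input is that $M$ is \emph{closed}: then $\fg$ is cocompact and torsion-free, so every $g\ne 1$ maps to a loxodromic element of $\PSL_2(\bC)$, whose lift has trace $\lambda+\lambda^{-1}$ with $|\lambda|\ne 1$; hence $t_g\ne 0$ for all $g\ne 1$ (no parabolic or elliptic elements occur).

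First I would record the group action. For $\epsilon\in H^1(\fg,C_2)$ let $\sigma_\epsilon$ be the $\bQ$-algebra automorphism of $K_M$ induced by $\epsilon\cdot\rho(g)=\epsilon(g)\rho(g)$, so that $\sigma_\epsilon(t_g)=\epsilon(g)\,t_g$. Because the $t_g$ generate $K_M$ and $\sigma_{\epsilon}\sigma_{\epsilon'}(t_g)=\epsilon(g)\epsilon'(g)\,t_g$, the assignment $\epsilon\mapsto\sigma_\epsilon$ is a group homomorphism, and it is injective: if $\sigma_\epsilon=\mathrm{id}$ then $\epsilon(g)=1$ whenever $t_g\ne 0$, i.e. for all $g\ne 1$, so $\epsilon=1$. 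As observed just before the lemma each $\sigma_\epsilon$ fixes $\InvKM$, so this yields an injection $H^1(\fg,C_2)\hookrightarrow\Aut(K_M/\InvKM)$ whose image $H$ has order $2^r$, where $r=\dim_{\bF_2}\fg/\fg^2$.

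Next I would observe that $K_M=\InvKM(\{t_g:g\in\fg\})$ with each $t_g$ a square root of the element $t_g^2\in\InvKM$; hence $K_M/\InvKM$ is a finite compositum of quadratic (or trivial) extensions, so it is Galois with $\Gal(K_M/\InvKM)$ an elementary abelian $2$-group. This already proves that $K_M$ is an elementary abelian extension of $\InvKM$, and combined with the previous paragraph gives $2^r\le[K_M:\InvKM]$. For the reverse bound I would prove the stated generation $K_M=\InvKM(t_{g_1},\dots,t_{g_r})$, where $g_1,\dots,g_r$ lift the chosen basis $\ol g_1,\dots,\ol g_r$ of $\fg/\fg^2$: for $g$ with $\ol g=\sum_{i\in I}\ol g_i$ one shows $t_g\in\InvKM\cdot\prod_{i\in I}t_{g_i}$ by repeatedly applying the $\SL_2$-identities $t_at_b=t_{ab}+t_{ab^{-1}}$ and $\chi_\rho([a,b])=t_a^2+t_b^2+t_{ab}^2-t_at_bt_{ab}-2$, together with the fact that traces of elements of the square subgroup $\fg^2=\lan g^2 : g\in\fg\ran$ already lie in $\InvKM$. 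Since each $t_{g_i}^2\in\InvKM$, this gives $[K_M:\InvKM]\le 2^r$.

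Finally the sandwich $2^r\le[K_M:\InvKM]\le 2^r$ forces equality, so the injection $H^1(\fg,C_2)\hookrightarrow\Gal(K_M/\InvKM)$ is an isomorphism onto an elementary abelian $2$-group of order $2^r$, and the generating set above realizes $K_M$ as claimed. The hard part is the generation step: controlling an arbitrary trace $t_g$ modulo $\InvKM$ requires the identification of $\InvKM$ with the trace field of $\fg^2$ and a somewhat fiddly reduction through the $\SL_2$ trace calculus (Maclachlan--Reid, \cite{BookMaRe}); by contrast the homomorphism, injectivity, and elementary-abelian assertions are formal once $t_g\ne 0$ for $g\ne 1$ is in hand, which is precisely where the closedness of $M$ enters.
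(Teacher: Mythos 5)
Your proof is correct, and it reaches the isomorphism $\Gal(K_M/\InvKM)\isom H^1(\fg(M),C_2)$ by a genuinely different mechanism than the paper, even though the raw ingredients (the action of $H^1(\fg(M),C_2)$ on $\Spec K_M$, the non-vanishing of traces of non-identity elements, and the identity forcing $\chi_\rho(g)\chi_\rho(h)\chi_\rho(gh)\in\InvKM$) are the same. The paper proves surjectivity directly: given an arbitrary $\sigma\in\Gal(K_M/\InvKM)$, it uses the fact that each $\chi_\rho(g)$ is a root of a monic quadratic over $\InvKM$ to write $\sigma(\chi_\rho(g))=\epsilon_\sigma(g)\chi_\rho(g)$ with $\epsilon_\sigma(g)\in\{\pm1\}$, and then uses the trace identity plus $\chi_\rho(g)\chi_\rho(h)\chi_\rho(gh)\neq 0$ (torsion-freeness of $\fg(M)$) to show $\epsilon_\sigma$ is a homomorphism, giving an explicit two-sided inverse to $\epsilon\mapsto\epsilon^\ast$; the generation statement is then an easy afterthought via $\chi_\rho(gh)\in\InvKM(\chi_\rho(g),\chi_\rho(h))$. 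You instead prove only injectivity of $H^1(\fg(M),C_2)\ra\Gal(K_M/\InvKM)$ and obtain surjectivity from the degree sandwich $2^r\le[K_M:\InvKM]\le 2^r$, where the upper bound is exactly the generation statement, proved independently by iterating $t_{ab}\in\InvKM\cdot t_a t_b$ (which does follow from $t_at_bt_{ab}\in\InvKM$ and $t_a^2,t_b^2\in\InvKM^{\times}$, together with the fact that traces of products of squares lie in $\InvKM$). Both routes are sound; the paper's inverse construction is more self-contained and makes the Galois action completely explicit, while yours makes the generation claim carry the load and so requires the trace-calculus reduction to be done carefully and non-circularly before the Galois statement is available --- which you correctly flag as the delicate step. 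Two minor remarks: your appeal to closedness (loxodromic elements) to get $t_g\neq 0$ is valid but slightly stronger than needed, since the paper gets it from torsion-freeness alone ($\Tr A=0$ forces $A^2=-I$); and your sharper claim $t_g\in\InvKM\cdot\prod_{i\in I}t_{g_i}$ must indeed be proved by the multiplicative trace argument rather than by Galois descent, since the latter would be circular at that stage of your proof.
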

\begin{proof}
 Let $\rho : \fg(M) \ra \SL_2(\bC)$ be
 a lift of the holonomy representation of $M$.
 Then the trace field $K_M$ is $\bQ(\chi_{\rho}(g)\mid g\in\fg(M))$
 and the invariant trace field $\InvKM$ is
 $\bQ(\chi_{\rho}(g^2)\mid g\in\fg(M)) = \bQ(\chi_{\rho}(g)^2\mid g \in\fg(M))$.
%
%
 Thus it is obvious that
 $K_M/\InvKM$ is an elementary abelian extension.
 The group action of $H^1(\fg(M),C_2)$ on $X_0(M)$
 induces a homomorphism
$$
 H^1(\fg(M),C_2) \ra \Gal(K_M/\InvKM); \quad
 \epsilon \mapsto \epsilon^{\ast},
$$
 where $\epsilon^{\ast}$ is defined by
 $\epsilon^{\ast}(\chi_{\rho}(g)) = \epsilon(g)\chi_{\rho}(g)$.
 On the other hand,
 take an element $\sigma$ of $\Gal(K_M/\InvKM)$.
 Note that any $\chi_{\rho}(g)$ in $K_M$
 is a root of a monic quadratic polynomial over $\InvKM$.
 Hence there exists a unique element $\epsilon_{\sigma}(g)$
 which takes value in $C_2:=\{\pm 1 \}$
 such that
$$
\sigma(\chi_{\rho}(g)) = \epsilon_{\sigma}(g)\chi_{\rho}(g).
$$
\noindent 
 We know that for any $g,h \in \fg(M)$
 the elements $\chi_{\rho}(g^2)$ and
 $\chi_{\rho}(g)\chi_{\rho}(h)\chi_{\rho}(gh)$ are contained in $\InvKM$
 since there is an identity
(\Cf \cite{BookMaRe}, \S 3.3.4, 3.3.5 or \cite{VarCharPSL}, \S2.4)
$$
 2\Tr(A)\Tr(B)\Tr(AB) = \Tr(A)^2 \Tr(B)^2 + \Tr(AB)^2 - \Tr(A B^{-1})^2
$$
 for any $A,B \in \SL_2(\bC)$.
 Therefore we have $\epsilon_{\sigma}(g^2)=1$ and
$$
 \sigma(\chi_{\rho}(g)\chi_{\rho}(h)\chi_{\rho}(gh))= \epsilon_{\sigma}(g)\epsilon_{\sigma}(h)\epsilon_{\sigma}(gh)\chi_{\rho}(g)\chi_{\rho}(h)\chi_{\rho}(gh) = \chi_{\rho}(g)\chi_{\rho}(h)\chi_{\rho}(gh).
$$
 Note that $\chi_{\rho}(g)\chi_{\rho}(h)\chi_{\rho}(gh) \neq 0$ since $\fg(M)$ is torsion-free.
 Thus we have
 $\epsilon_{\sigma}(g)\epsilon_{\sigma}(h)\epsilon_{\sigma}(gh) = 1$,
 namely $\epsilon_{\sigma}(gh)=\epsilon_{\sigma}(g)\epsilon_{\sigma}(h)$.
 Hence we deduce that
 $\epsilon_{\sigma}$ is an element of $H^1(\fg(M),C_2)$
 and it is the inverse of the previous homomorphism.
 Thus we see that $\Gal(K_M/\InvKM)$
 is isomorphic to $H^1(\fg(M),C_2)$.

 Since
 $\chi_{\rho}(g)\chi_{\rho}(h)\chi_{\rho}(gh) \in \InvKM$
 and $\chi_{\rho}(g)\neq 0$ for any $g, h\in\fg(M)$
%
%
%
 we see that $\chi_{\rho}(gh)$ is contained
 in $\InvKM(\chi_{\rho}(g),\chi_{\rho}(h))$.
 Therefore
 if the finite group $H^1(\fg(M),C_2)$ is written as
$$
 H^1(\fg(M),C_2) = H^1(\fg(M)/\fg(M)^2, C_2) \isom \fg(M)/\fg(M)^2 = \lan \ol{g}_1,\cdots,\ol{g}_r \ran,
$$
 we have
 $K_M = \InvKM(\chi_{\rho}(g_1),\cdots,\chi_{\rho}(g_r))$.
\end{proof}

\noindent 
 In particular,
 since $H^1(\fg(M),C_2) = \Hom(\fg(M)^{\ab},C_2)$
 we have the following corollary.

\begin{corollary}
 Let $M$ be an orientable closed hyperbolic $3$-manifold
 of finite volume.
 Then the trace field $K_M$
 is equal to the invariant trace field
 if and only if the homology group $H_1(M,\bZ)\;\isom\;\fg(M)^{\ab}$ has rank $0$ and
 does not have $2$-torsion.
\end{corollary}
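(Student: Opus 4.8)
The plan is to combine the isomorphism $\Gal(K_M/\InvKM) \cong H^1(\fg(M),C_2)$ established in \reflem{lem:KMInvKM} with an elementary computation of homomorphism groups out of a finitely generated abelian group. First I would observe that, by \reflem{lem:KMInvKM}, the equality $K_M = \InvKM$ holds if and only if the Galois group $\Gal(K_M/\InvKM)$ is trivial, hence if and only if $H^1(\fg(M),C_2) = 0$. Using the remark immediately preceding the corollary, $H^1(\fg(M),C_2) = \Hom(\fg(M)^{\ab},C_2) = \Hom(H_1(M,\bZ),C_2)$, so the task reduces to determining exactly when this last group vanishes.

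Next I would apply the structure theorem for finitely generated abelian groups. Writing $H_1(M,\bZ) \cong \bZ^r \oplus \bigoplus_j \bZ/n_j\bZ$ with each $n_j \ge 2$, the additivity of $\Hom(-,C_2)$ gives $\Hom(H_1(M,\bZ),C_2) \cong C_2^{\oplus r} \oplus \bigoplus_j \Hom(\bZ/n_j\bZ,C_2)$. Since $C_2 \cong \bZ/2\bZ$ and $\Hom(\bZ/n\bZ,\bZ/2\bZ) \cong \bZ/\gcd(n,2)\bZ$, which is nonzero precisely when $n$ is even, the whole group vanishes if and only if $r = 0$ and every $n_j$ is odd. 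The condition $r=0$ is the vanishing of the rank of $H_1(M,\bZ)$, and the condition that all $n_j$ be odd is exactly the absence of $2$-torsion in the torsion subgroup. This yields precisely the asserted equivalence.

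There is essentially no genuine obstacle in the argument itself: the substantive content of the corollary is carried entirely by \reflem{lem:KMInvKM}, which identifies the extension degree $[K_M:\InvKM]$ with the order of $H^1(\fg(M),C_2)$. The only point requiring a little care is the identification $H^1(\fg(M),C_2) = \Hom(\fg(M)^{\ab},C_2)$, which holds because $C_2$ is abelian, so that every homomorphism $\fg(M) \to C_2$ factors uniquely through the abelianization $\fg(M)^{\ab} \cong H_1(M,\bZ)$. Granting this, the corollary follows in one line from the $\Hom$-computation above.
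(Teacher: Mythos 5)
Your proof is correct and follows exactly the route the paper intends: reduce via \reflem{lem:KMInvKM} to the vanishing of $H^1(\fg(M),C_2)=\Hom(\fg(M)^{\ab},C_2)$ and then apply the structure theorem for finitely generated abelian groups. This matches the paper's (essentially one-line) derivation of the corollary.
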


\begin{remark}
 If $M$ is an $r$-component link in the $3$-sphere $S^3$
 then the abelianization of the fundamental group
 $\fg(M)^{\ab}:=\fg(M)/[\fg(M),\fg(M)]$ is
 isomorphic to $\bZ^r$.
 Hence we have $H^1(\fg(M),C_2)\;\isom\; C^r_2$.
 On the other hand,
 we know that
 the trace field is equal to the invariant trace field
 for any hyperbolic link in the $3$-sphere
 (\cite{BookMaRe}, Corollary 4.2.2).
 Therefore the above lemma does not hold
 for cusped hyperbolic $3$-manifolds in general.
\end{remark}

 By the above lemma,
 we see that $X_0(M)= \Spec K_M \ra \Spec(\InvKM)$
 is a Galois cover with Galois group $H^1(\fg(M),C_2)$.
 Therefore the quotient scheme
 $\ol{X}_0(M) = X_0(M)/H^1(\fg(M),C_2)$
 is isomorphic to $\Spec(\InvKM)$.

\begin{remark}
 For a representation $\ol{\rho}:\fg(M) \ra \PSL_2(\bC)$
 its character $\ol{\chi}_{\ol{\rho}}:\fg(M)\ra\bC$
 is defined by
 $\ol{\chi}_{\ol{\rho}}(g):= (\Tr \ol{\rho}(g))^2$.
 If we denote by $\ol{X}(M)(\bC)$
 the set of characters of representations
 $\ol{\rho} :\fg(M) \ra \PSL_2(\bC)$
 which lift to $\SL_2(\bC)$
 there is an isomorphism between
 $X(M)(\bC)/H^1(\fg(M),C_2)$ and $\ol{X}(M)(\bC)$
(\cite{VarCharPSL}, Proposition 4.2).
 Since there is an isomorphism
$$
 X(M)(\bC)/H^1(\fg(M),C_2) \;\isom\; (X(M)/H^1(\fg(M),C_2))(\bC)
$$
%
%
%
 we can consider the quotient scheme
 $\ol{X}_0(M)\;\isom\; \Spec\InvKM$
 as the canonical component of
 the $\PSL_2$-character variety of $M$.
\end{remark}

\begin{theorem}
\label{thm:HWzetaPSLCharVar}
 Let $M$ be an orientable closed hyperbolic
 $3$-manifold of finite volume.
 Then
 the quotient scheme
 $\ol{X}_0(M):= X_0(M)/H^1(\fg(M),C_2)$
 is isomorphic to the spectrum $\Spec(\InvKM)$
 of the invariant trace field $\InvKM$
 and the Hasse-Weil zeta function
 $\zeta(\ol{X}_0(M),s)$ is
 equal to the Dedekind zeta function
 $\zeta(\InvKM,s)$ of the invariant trace field
 $\InvKM$ of $M$.
\end{theorem}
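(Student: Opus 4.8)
The plan is to read off both assertions from the structure results already assembled above, so that only a short verification remains. For the isomorphism $\ol{X}_0(M)\isom\Spec(\InvKM)$, I would start from \refcor{Cor:HWzetaCharVar}, which identifies $X_0(M)\isom\Spec K_M$, together with \reflem{lem:KMInvKM}, which shows that $K_M/\InvKM$ is a finite Galois extension whose Galois group is canonically identified with $H^1(\fg(M),C_2)$. The key point to verify is that the geometric $H^1(\fg(M),C_2)$-action on $X_0(M)$ (defined above by $\epsilon\cdot\rho(g)=\epsilon(g)\rho(g)$) corresponds, under the identification $X_0(M)\isom\Spec K_M$, to the Galois action $\epsilon^{\ast}(\chi_{\rho}(g))=\epsilon(g)\chi_{\rho}(g)$. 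But this is exactly the isomorphism $H^1(\fg(M),C_2)\isom\Gal(K_M/\InvKM)$ constructed in the proof of \reflem{lem:KMInvKM}. Consequently $\Spec K_M\ra\Spec\InvKM$ is a Galois cover with group $H^1(\fg(M),C_2)$, and the categorical quotient of $\Spec K_M$ by this action is the spectrum of the fixed field, which is precisely $\InvKM$ by \reflem{lem:KMInvKM}. This yields $\ol{X}_0(M)\isom\Spec(\InvKM)$.

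For the second assertion I would then combine this isomorphism with the general computation of Hasse-Weil zeta functions of spectra of number fields. Since $\ol{X}_0(M)\isom\Spec(\InvKM)$ is a zero-dimensional smooth scheme over $\bQ$, its natural integral model is $\Spec(\cO_{\InvKM})$, the spectrum of the ring of integers of $\InvKM$; as explained in the Remark following \refcor{Cor:HWzetaCharVar}, no ambiguity by finitely many rational factors arises here, because one may take the unique maximal order. The Dedekind zeta computation recorded in the Example of \refsec{section:Hasse-Weil zeta} then gives $\zeta(\Spec(\cO_{\InvKM}),s)=\zeta(\InvKM,s)$, and hence $\zeta(\ol{X}_0(M),s)=\zeta(\InvKM,s)$.

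The main obstacle, such as it is, does not lie in the final deduction but in the compatibility check described in the first paragraph: one must confirm that the scheme-theoretic quotient $X_0(M)/H^1(\fg(M),C_2)$ is computed by the fixed field, i.e.\ that forming invariants commutes with passage to $\Spec$. For the spectrum of a field acted on by a finite group of automorphisms this is standard, the categorical quotient being $\Spec$ of the invariant subring. Thus essentially all of the substantive content has already been carried out in \reflem{lem:KMInvKM} and \refcor{Cor:HWzetaCharVar}, and the theorem follows formally from those results.
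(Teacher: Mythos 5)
Your proposal is correct and follows essentially the same route as the paper: identify $X_0(M)\isom\Spec K_M$ via \refcor{Cor:HWzetaCharVar}, check that the geometric $H^1(\fg(M),C_2)$-action is the Galois action $\chi_{\rho}(g)\mapsto\epsilon(g)\chi_{\rho}(g)$, invoke \reflem{lem:KMInvKM} to see that this realizes the full group $\Gal(K_M/\InvKM)$, and conclude that the quotient is the spectrum of the fixed field $\InvKM$, with the zeta identity then read off from the maximal-order model as in the Example and the Remark following \refcor{Cor:HWzetaCharVar}. This matches the paper's argument in both structure and detail.
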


 There is a one-to-one correspondence
 between the set of
 conjugacy classes of the lifts of the holonomy
 representation $\rho_M : \fg(M) \ra \PSL_2(\bC)$
 and the cohomology group $H^1(\fg(M),C_2)=\Hom(\fg(M),C_2)$.
 By \reflem{lem:KMInvKM}
 we know that the cardinality of
 $H^1(\fg(M),C_2)$ is equal to
 $[K_M : \InvKM]$.
%
 Thus we obtain the following:

\begin{corollary}
\label{cor:NumberOFCanonicalComp}
 Let $M$ be a closed oriented complete hyperbolic $3$-manifold
 of finite volume.
 Then the number of canonical components
 $X(M)(\bC)_0$
 of the $\SL_2(\bC)$-character variety $X(M)(\bC)$
 is equal to
 $[K_M : \InvKM] = \# H^1(\fg(M),C_2)$.
\end{corollary}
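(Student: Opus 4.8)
The plan is to identify the canonical components with the distinct characters of the lifts of $\rho_M$ and then count these by the two facts already established: the stated bijection between conjugacy classes of lifts of $\rho_M$ and $H^1(\fg(M),C_2)$, together with the equality $\# H^1(\fg(M),C_2) = [K_M:\InvKM]$ from \reflem{lem:KMInvKM}. First I would note that, $M$ being closed, it has no cusps, so Thurston's theorem gives $\dim X(M)(\bC)_0 = 0$; hence every canonical component is a single point of $X(M)(\bC)$, namely the character $\chi_{\rho}$ of some lift $\rho : \fg(M) \ra \SL_2(\bC)$ of the holonomy representation. Counting canonical components therefore reduces to counting the distinct characters arising from lifts of $\rho_M$.

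Next I would show that distinct conjugacy classes of lifts yield distinct points. As the holonomy representation, $\rho_M$ is irreducible, so every lift $\rho$ is irreducible, and the character of an irreducible $\SL_2(\bC)$-representation determines it up to conjugacy. Thus the assignment (conjugacy class of a lift) $\mapsto$ (its character) is injective, and combined with the bijection between conjugacy classes of lifts and $H^1(\fg(M),C_2)$ this shows the number of canonical components equals $\# H^1(\fg(M),C_2)$, which by \reflem{lem:KMInvKM} equals $[K_M:\InvKM]$. Equivalently, and more in keeping with the scheme-theoretic viewpoint of this subsection, one may realize these points as the $H^1(\fg(M),C_2)$-orbit of the holonomy point inside $X_0(M)(\bC) \isom (\Spec K_M)(\bC)$, where $\epsilon$ acts by $\chi_{\rho}(g) \mapsto \epsilon(g)\chi_{\rho}(g)$; by \reflem{lem:KMInvKM} this is the action of $\Gal(K_M/\InvKM)$, so the orbit has size $[K_M:\InvKM]$.

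The one point needing care — the main obstacle — is precisely this injectivity of the character map on conjugacy classes of lifts, equivalently the freeness of the $H^1(\fg(M),C_2)$-action on the holonomy point. This is exactly where the faithfulness proved in \reflem{lem:KMInvKM} is used: if $\epsilon \neq 1$ then $\epsilon^{\ast} \neq \mathrm{id}$ on $K_M$, so there is some $g$ with $\chi_{\rho}(g) \neq 0$ while $\epsilon(g) = -1$, whence $\epsilon \cdot \chi_{\rho} \neq \chi_{\rho}$. Once this freeness is in hand the count is immediate, giving the asserted equality of the number of canonical components with $[K_M:\InvKM] = \# H^1(\fg(M),C_2)$.
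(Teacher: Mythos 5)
Your proposal is correct and takes essentially the same route as the paper: the paper deduces the corollary directly from the bijection between conjugacy classes of lifts of $\rho_M$ and $H^1(\fg(M),C_2)$ together with \reflem{lem:KMInvKM}, leaving implicit exactly the points you spell out (that each canonical component is a single character since $\dim X(M)(\bC)_0=0$, and that distinct conjugacy classes of irreducible lifts have distinct characters). Your added justification of the freeness of the $H^1(\fg(M),C_2)$-action via $\chi_\rho(g)\neq 0$ is the same nonvanishing used inside the proof of \reflem{lem:KMInvKM}, so nothing new is needed.
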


\begin{remark}
 There is an exact sequence
$$
 1 \ra \Isom_{+}(\bH^3) \ra \Isom(\bH^3) \ra \left\{\pm 1 \right\} \ra 1,
$$
 where $\Isom(\bH^3)$ is the group
 of isometries of $\bH^3$
 and $\Isom_{+}(\bH^3)$
 is the subgroup consisting of
 orientation-preserving isometries.
 $\Isom_{+}(\bH^3)$ is isomorphic to
 $\PSL_2(\bC)$ and
 $\Isom(\bH^3)$ is generated by
 $\Isom_{+}(\bH^3)$ and
 the orientation-reversing isometry
 defined by
 the anti-holomorphic M\"obius transformation on
 the Riemann sphere $\hat{\bC}$.
 Therefore there are
 two orientation-preserving isometric classes
 for an orientable hyperbolic $3$-manifold
 of finite volume
 by Mostow-Prasad Rigidity
 (in other words,
 there are two $\PSL_2(\bC)$-conjugacy
 classes of discrete faithful representations
 of the fundamental group $\fg(M)$ into $\PSL_2(\bC)$
 for an orientable hyperbolic $3$-manifold $M$
 of finite volume,
 which are isomorphic by complex conjugation).
 Hence by the above result
 there are
 $2[K_M : \InvKM] = 2\# H^1(\fg(M),C_2)$
 canonical components
 in the $\SL_2(\bC)$-character variety $X(M)(\bC)$
 for a closed orientable
 hyperbolic $3$-manifold $M$.
\end{remark}

 Let $k$ be a number field with exactly one complex place
 and let $A$ be a quaternion algebra over $k$
 which is ramified at all real places.
 Let $\rho : A \ra \rM_2(\bC)$
 be a $k$-embedding of $A$
 and $\cO$ an (maximal) order of $A$.
 Let $\cO^1$ be the subgroup of the unit group $\cO^{\times}$
 with reduced norm $1$.
 A complete orientable hyperbolic $3$-manifold
 of finite volume is called arithmetic
 when its fundamental group is commensurable with
 such $P(\rho(\cO^1))$,
 where $P : \GL_2(\bC) \ra \PGL_2(\bC)$ is the projection.

%
%
%
%

 For arithmetic $3$-manifolds
 it is well-known as Borel's formula that
 the hyperbolic volumes are expressed,
 especially
 in terms of the special values at $2$
 of the Dedekind zeta functions of
 the invariant trace fields as follows.

\begin{theorem}[\Cf \cite{BookMaRe}, Theorem 11.1.3]
\label{thm:BorelFormula}
 Let $k$ be a number field having exactly
 one complex place,
 $A$ a quaternion algebra which ramifies
 at all real places
 and $\cO$ a maximal order in $A$.
 Let $\cO^1$ be the subgroup of $\cO^{\times}$
 of reduced norm $1$ elements.
 Let $\rho : A \ra \rM_2(\bC)$ be a splitting
 of $A$ over $k$
 and denote by $P(\cO^1)$ the projection
 of $\cO^1$ in $\PSL_2(\bC)$.
 Then the hyperbolic volume of $\bH^3/P\rho(\cO^1)$ is
$$
 \Vol(\bH^3/P\rho(\cO^1)) = \dfrac{4\pi^2 |\Delta_k|^{3/2}\zeta(k,2)\prod_{\fp\mid\Delta(A)} (N(\fp)-1)}{(4\pi^2)^{[k:\bQ]}}.
$$
\noindent
 Here $\Delta_k$ (resp. $\Delta(A)$)
 is the discriminant of $k$ (resp. $A$). 
\end{theorem}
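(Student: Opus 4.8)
The plan is to derive this volume formula from the computation of the Tamagawa number of the arithmetic group attached to $A$, following Borel's original argument. Let $G := \SL_1(A)$ be the $k$-algebraic group of reduced-norm-one elements of $A$, a simply connected semisimple group of dimension $3$ over $k$, and set $\mathbf{G} := \Res_{k/\bQ}G$. The foundational input is Weil's theorem that $\SL_1$ of a quaternion algebra has Tamagawa number $1$, so that with respect to the Tamagawa measure on $\mathbf{G}(\bA_{\bQ})$ one has $\mathrm{vol}\bigl(\mathbf{G}(\bQ)\backslash \mathbf{G}(\bA_{\bQ})\bigr) = 1$. The entire computation then amounts to disassembling this adelic volume into an archimedean part (which encodes the hyperbolic volume we want) and a product of local volumes (which will produce $\zeta(k,2)$, the discriminant, and the ramification correction).

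First I would analyze the archimedean places. The unique complex place gives $G(k_v)\cong\SL_2(\bC)$, which acts transitively on $\bH^3$ with point stabilizer $\SU(2)$; thus the invariant measure on $\SL_2(\bC)$, divided by the volume of $\SU(2)$, descends to a fixed multiple of the hyperbolic volume form on $\bH^3$. Each of the $r_1 = [k:\bQ]-2$ real places, at which $A$ ramifies by hypothesis, gives the \emph{compact} group $G(k_v)\cong\SU(2)$ of norm-one Hamilton quaternions. Because all real factors are compact while the complex factor is not, strong approximation (available since $\mathbf{G}$ is simply connected) lets me identify the adelic double-coset volume with the covolume of the single arithmetic lattice $\cO^1$ inside $\SL_2(\bC)$, i.e.\ with $\Vol(\bH^3/P\rho(\cO^1))$, up to the class number of $\cO$, which for a maximal order I would control and absorb into the normalization.

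Next I would compute the non-archimedean local volumes. At a finite place $\fp$ where $A$ is split one has $G(k_\fp)\cong\SL_2(k_\fp)$ with hyperspecial maximal compact $\SL_2(\cO_{k_\fp})$; using $|\SL_2(\bF_q)| = q^3(1-q^{-2})$ its Tamagawa-normalized volume is the Euler factor $1 - N(\fp)^{-2}$, and the product over all split places reconstructs $\zeta(k,2)^{-1}$. At the finitely many $\fp\mid\Delta(A)$ where $A$ is ramified, $G(k_\fp)$ is instead the norm-one unit group of the maximal order of the local division algebra; comparing this smaller local volume with the split one contributes precisely the factor $N(\fp)-1$, which is the origin of $\prod_{\fp\mid\Delta(A)}(N(\fp)-1)$ in the formula.

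The remaining and most delicate step is the bookkeeping of the global normalization constants, and this is where I expect the main difficulty to lie. The Tamagawa measure is built from the self-dual additive measure on the adeles of $k$, and its comparison with the naive product of local Haar measures introduces a factor $|\Delta_k|^{\dim G/2} = |\Delta_k|^{3/2}$, accounting for the discriminant in the numerator. The powers of $4\pi^2$ emerge from the archimedean data: the volume of $\SU(2)$ at each real place and the explicit constant relating the $\SL_2(\bC)$-invariant measure to the hyperbolic metric at the complex place, which together assemble into the denominator $(4\pi^2)^{[k:\bQ]}$ against the single $4\pi^2$ in the numerator. The real work is to fix one Haar-measure convention consistently across the adelic, local, and hyperbolic pictures so that these transcendental constants, the discriminant exponent, and the factor $\zeta(k,2)$ all combine into the stated closed form; for this detailed constant-chasing I would follow Borel's original computation as presented in \cite{BookMaRe}, Chapter $11$.
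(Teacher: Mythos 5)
The paper does not prove this statement at all: it is quoted verbatim as background from Maclachlan--Reid (\cite{BookMaRe}, Theorem 11.1.3) and used as a black box to deduce Corollary \ref{cor:SpecialValueHWPSL}. So there is no in-paper proof to compare against; the relevant comparison is with the proof in the cited source, and your sketch follows essentially that argument (Borel's Tamagawa-number computation): $\tau(\SL_1(A))=1$, strong approximation for the simply connected group to pass from the adelic quotient to the single lattice $\cO^1\subset\SL_2(\bC)$, local volumes $1-N(\fp)^{-2}$ at split finite places assembling into $\zeta(k,2)^{-1}$, the ratio $N(\fp)-1$ at ramified places, $|\Delta_k|^{3/2}$ from the self-dual additive measure, and the $\SU(2)$-volumes at the $[k:\bQ]-2$ real places together with the complex-place normalization producing the powers of $4\pi^2$. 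Two small remarks. First, the phrase ``up to the class number of $\cO$, which \dots I would control and absorb into the normalization'' is the one place where the sketch is looser than it should be: for the norm-one group $\SL_1(A)$, strong approximation (valid precisely because the complex place makes $G(k\otimes\bR)$ non-compact) forces the class number of the group to be $1$, so there is nothing to absorb --- the adelic quotient is exactly $\Gamma\backslash\SL_2(\bC)\times K_f$ with $\Gamma=\cO^1$; conflating this with the class number of the order would be an error if carried out literally. Second, your sketch explicitly defers the transcendental constant-chasing to the reference, so as written it is an outline of the standard proof rather than a self-contained argument; that is entirely consistent with how the paper itself treats the result, namely as a known theorem to be cited rather than reproved.
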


 It is well-known that
 if $M$ and $M'$ are commensurable hyperbolic $3$-manifolds
 of finite volume
 then the quotient $\Vol(M')/\Vol(M)$
 of their volumes is a rational number.
 Therefore we have the following corollary.

\begin{corollary}
\label{cor:SpecialValueHWPSL}

 Let $M$ be an arithmetic closed hyperbolic
 $3$-manifold.
 Then the special value of
 $\zeta(\ol{X}_0(M),s)$ at $s=2$
 is expressed in terms of
 the hyperbolic volume $\Vol(M)$,
 the discriminant $\Delta_{\InvKM}$
 and $\pi$ as follows:
$$
 \zeta(\ol{X}_0(M),2) \sim_{\bQ^{\times}} \dfrac{(4\pi^2)^{[\InvKM:\bQ]-1}\Vol(M)}{|\Delta_{\InvKM}|^{3/2}},
$$
 where $\sim_{\bQ^{\times}}$ means the equality holds
 up to a rational number.
\end{corollary}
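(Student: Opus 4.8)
The plan is to combine the identification of the quotient scheme from \refthm{thm:HWzetaPSLCharVar} with Borel's volume formula (\refthm{thm:BorelFormula}). First I would invoke \refthm{thm:HWzetaPSLCharVar}, which gives an isomorphism $\ol{X}_0(M) \isom \Spec(\InvKM)$ and hence
$$
 \zeta(\ol{X}_0(M),2) = \zeta(\InvKM,2).
$$
This reduces the problem to expressing the special value $\zeta(\InvKM,2)$ of the Dedekind zeta function of the invariant trace field in terms of $\Vol(M)$, $\Delta_{\InvKM}$, and $\pi$.

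Next I would unwind the definition of arithmeticity recalled just before \refthm{thm:BorelFormula}. Since $M$ is arithmetic, $\fg(M)$ is commensurable with a group of the form $P\rho(\cO^1)$; by the commensurability invariance of the invariant trace field and the invariant quaternion algebra, the number field $k$ with exactly one complex place appearing there coincides with $\InvKM$, while $A$ is a quaternion algebra over $k=\InvKM$ ramified at all real places and $\cO$ is a maximal order of $A$. Applying \refthm{thm:BorelFormula} to the arithmetic manifold $\bH^3/P\rho(\cO^1)$ then gives
$$
 \Vol(\bH^3/P\rho(\cO^1)) = \dfrac{4\pi^2 |\Delta_{\InvKM}|^{3/2}\,\zeta(\InvKM,2)\prod_{\fp\mid\Delta(A)} (N(\fp)-1)}{(4\pi^2)^{[\InvKM:\bQ]}}.
$$

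Solving this identity for $\zeta(\InvKM,2)$ yields
$$
 \zeta(\InvKM,2) = \dfrac{(4\pi^2)^{[\InvKM:\bQ]-1}\,\Vol(\bH^3/P\rho(\cO^1))}{|\Delta_{\InvKM}|^{3/2}\prod_{\fp\mid\Delta(A)}(N(\fp)-1)}.
$$
To finish, I would observe that the product $\prod_{\fp\mid\Delta(A)}(N(\fp)-1)$ is a positive integer, hence lies in $\bQ^{\times}$, and that since $M$ and $\bH^3/P\rho(\cO^1)$ are commensurable hyperbolic $3$-manifolds of finite volume, the ratio $\Vol(M)/\Vol(\bH^3/P\rho(\cO^1))$ is a nonzero rational number. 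Absorbing both factors into $\sim_{\bQ^{\times}}$ converts the displayed equality into the asserted formula. The main point requiring care is the identification $k=\InvKM$ together with the fact that the quaternion-algebra data enters only through a rational factor; this rests on the commensurability invariance of the invariant trace field and invariant quaternion algebra, and on Mostow--Prasad rigidity, which ensures that $\Vol$ is a well-defined commensurability-rational invariant of $M$ so that passing from $\bH^3/P\rho(\cO^1)$ to $M$ costs only a factor in $\bQ^{\times}$.
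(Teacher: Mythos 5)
Your proposal is correct and follows essentially the same route as the paper: the paper likewise derives the corollary by combining the isomorphism $\ol{X}_0(M)\isom\Spec(\InvKM)$ from \refthm{thm:HWzetaPSLCharVar} with Borel's formula (\refthm{thm:BorelFormula}) applied to the commensurable arithmetic manifold $\bH^3/P\rho(\cO^1)$, absorbing the quaternion-algebra factor and the rational volume ratio into $\sim_{\bQ^{\times}}$. Your write-up merely makes explicit the identification $k=\InvKM$ that the paper leaves implicit.
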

%
%

\begin{remark}
 For a closed orientable
 hyperbolic $3$-manifold of finite volume,
 the second cohomology group $H^2(\fg(M),C_2)$
 is non-zero in general
 since the abelianization $\fg(M)^{\ab}$
 might have non-cyclic $2$-torsion,
 namely there might be a representation
 $\ol{\rho}:\fg(M)\ra \PSL_2(\bC)$
 which never lifts to $\SL_2(\bC)$
 (\cite{GM}, Lemma 2.3, see also \cite{CullerLifting}).
 Hence there might be a little difference
 in our case
 between the constructions of
 the $\PSL_2(\bC)$-character varieties
 in the references
 Gonz{\'a}lez-Acu{\~n}a-Montesinos-Amilibia \cite{GM},
 Boyer-Zhang \cite{BoyerZhang},
 Long-Reid \cite{LongReid}
 and Heusener-Porti \cite{VarCharPSL}.
\end{remark}

\begin{remark}
\label{rem:1cuspedcasePSLandVolume}
 On the other hand,
 for any $1$-cusped orientable hyperbolic
 $3$-manifold $M$ of finite volume
 such that $H_1(M,\bZ/2\bZ)=\bZ/2\bZ$,
 it is known that $H^2(\fg(M),C_2) = 0$
 (for details, see \cite{BoyerZhang}, page $756$).
 Hence there is no ambiguity on the definition
 of the $\PSL_2(\bC)$-character variety of $M$.
 However, as it is obtained in \cite{MPL}
 the canonical component of
 the $\PSL_2(\bC)$-character variety
 of a hyperbolic twist knot complement in $S^3$
 is the projective line $\bP^1_{\bC}$.
 Therefore it seems we can not expect that
 the Hasse-Weil zeta function
 of the $\PSL_2$-character variety of a
 $1$-cusped orientable hyperbolic $3$-manifold of finite volume
 would have information on the hyperbolic volume
 of the manifold.
\end{remark}

 By Mostow-Prasad Rigidity
 the isometric classes of
 orientable complete hyperbolic $3$-manifolds
 of finite volume
 correspond bijectively to
 the isomorphism classes of
 the fundamental groups of the manifolds
 (more precisely the conjugacy classes
 of the holonomy representations).
 From the results in this paper
 we know that for a closed orientable hyperbolic
 $3$-manifold $M$ of finite volume
 the canonical component $X_0(M)$
 of the $\SL_2$-character variety
 is determined by
 the trace field $K_M$.
 Therefore it is natural to ask
 whether the closed hyperbolic $3$-manifold
 of finite volume is determined by the trace field.
 However, for a closed hyperbolic $3$-manifold $M$,
 there are infinitely many non-isometric
 closed hyperbolic $3$-manifolds in
 the commensurable class of $M$
 since $\fg(M)$ is
 a torsion-free infinite group and is residually finite.
 Therefore there are
 infinitely many non-isometric closed hyperbolic $3$-manifolds $M'$
 which are commensurable with $M$
 satisfying $K_{M'} = {\rm Inv}K_{M'} = \InvKM$.
 Thus the trace field is not enough
 to distinguish the isometric class of
 a closed hyperbolic $3$-manifold
 (the author would like to appreciate Alan Reid
 for answering this question).

%
%

%
 Invariant trace fields have a characterization as number fields
 such that they have exactly one complex place.
 For such number fields
 it is known (\cite{ChHaLoRe}, Corollary 1.4) that
 their isomorphism classes are determined by
 the Dedekind zeta functions.
 Namely two such number fields are isomorphic
 if and only if they are arithmetically equivalent.
 Hence it would be
 worth considering the following question.
%
%
%

\begin{question}
 Are the trace fields of closed hyperbolic $3$-manifolds
 isomorphic
 if and only if they are arithmetically equivalent?
\end{question}

%
%
%
%

\section{Examples}
\label{sec:examples}

 Here we give some explicit examples of
 the defining polynomials of
 the $\SL_2(\bC)$-character varieties,
 holonomy representations
 and the trace fields
 of some closed arithmetic hyperbolic 3-manifolds
 of small volumes.
 
 We followed the way in \cite{GM}
 to compute defining polynomials of
 the $\SL_2(\bC)$-character variety of
 a finitely presented group.
 After we have obtained defining polynomials
 of the character variety for each manifold,
 we have replaced those polynomials
 with simpler ones by computing their Gr\"obner basis
 and have found the common zeros of them by Maple.
 The polynomials presented here are the replaced ones.
 It is relatively not difficult to
 find the common zeros of the polynomials
 in an algebraic closure of each finite field $\bFp$
 once we know about the common zeros in $\bC$.
 Then we have determined the Weil-type and Hasse-Weil type
 zeta functions and the trace fields.
 (For closed hyperbolic $3$-manifolds,
 by \refcor{Cor:HWzetaCharVar}
 it is enough to compute the trace field
 to obtain the zeta functions.
 Thus we include the description of the zeta function
 only in the Weeks manifold case.)

 For computing an explicit form of
 the holonomy representation,
 since the fundamental groups in our examples are
 generated by two elements,
 we follow the way given in \cite{ChFrJoRe}.
%
%
%

\subsection{Weeks manifold case}

 The Weeks manifold $M_W$ is obtained by
 $(5,1)$, $(5,2)$ Dehn surgeries on the Whitehead link complement.
%
 the Weeks manifold is the unique manifold up to isometry
 which has the smallest volume
 among all the orientable closed hyperbolic 3-manifolds
 (\cite{GMM}, \cite{Milley}).
 Its fundamental group has the following presentation:
$$
 \fg(M_W) \isom \lan a,b \;\mid\; w_1=w_2=1 \ran,
$$
\noindent 
 where 
$$
 w_1:= ababaBa^2B, \quad w_2:=bababAb^2A
$$
 for $A:=a^{-1}$, $B:=b^{-1}$.
%
 The original 6 defining polynomials
 obtained by the method in \cite{GM}
 are quite complicated.
 However, by the theory of Gr\"obner basis,
 we can replace those polynomials by simpler ones.
 Here we only show those polynomials
 replaced by the Gr\"obner basis of them
 (which we calculated by the software Maple):
%
\begin{align*}
 f_1 &=-2 + z + 4 z^2 + 2 z^3 - 4 z^4 - z^5 + z^6\\
     &=(z - 2)(z^2 + z -1)(z^3 - z - 1),\\
 f_2 &=-2 + 3 z + 3 z^2 - 4 z^3 + 2 y - 3 y z - y z^2 - z^4 + z^5 + y z^3,\\
 f_3 &=-z - 3 y + 4 - 4 z^2 + z^4 - y^2 + y^3,\\
 f_4 &= -y z^2 + x z^2 - y z + x z + y - x,\\
 f_5 &=-x + z - 3 z^3 + 2 z^2 + z^5 + x y - y z - z^4 - y^2 z + x y^2,\\
 f_6 &=-z^4 - 4 - x y z + z^3 + y^2 + x^2 + 4 z^2 - 2 z.
\end{align*}

\noindent
 Then the $\SL_2(\bC)$-character variety $X(M_W)(\bC)$
 consists of the following points:
$$
 \left\{ (2,2,2) \right\},
$$
$$
 \left\{ (\ga,\ga,2),(\ga,2,\ga),(2,\ga,\ga) \mid \ga^2 + \ga -1 = 0 \right\},
$$
$$
 \left\{ (\ga,-1-\ga,\ga), (-1-\ga,\ga,\ga), (-1-\ga,-1-\ga,\ga) \mid \ga^2 +\ga - 1 = 0 \right\},
$$
$$
 \left\{ (1-\gb^2,1-\gb^2,\gb) \mid \gb^3 - \gb - 1 = 0\right\}.
$$
\noindent 
 Thus we see that $\dim X(M_W)(\bC) = 0$.
 The subset of $X(M_W)(\bC)$ consisting of reducible characters
 is the set of common zeros of the above polynomials and
 the polynomial $x^2 + y^2 + z^2 - 4xyz - 4$,
 which is equal to $X(M_W)(\bC)$ except
 $\left\{ (1-\gb^2,1-\gb^2,\gb) \mid \gb^3 - \gb - 1 = 0\right\}$.
 Therefore the subset $X(M_W)(\bC)_{\Irr}$ of $X(M_W)(\bC)$
 consisting of irreducible characters
 is
$$
 X(M_W)(\bC)_{\Irr} = \left\{ (1-\gb^2,1-\gb^2,\gb) \mid \gb^3 - \gb - 1 = 0\right\}.
$$
 Now we can show that
 the set $\Rep_2(\fg(M_W))(\bk)/\PGL_2(\bk)$
 of conjugacy classes of absolutely irreducible
 representations of $\fg(M_W)$ into $\SL_2(\bk)$
 over an algebraically closed field $\bk$
 consists of
  points of the form $(1-\gb^2,1-\gb^2,\gb)$,
 where $\gb$ is a root of the polynomial
 $f(T)= T^3 - T - 1$ in $\bk$.
 Since $X(M_W)(\bC)_{\Irr}$ contains a point
 corresponding to the holonomy character,
 the trace field $K_{M_W}$ is equal to $\bQ[T]/(f)$.
 Its discriminant $d_{K_{M_W}}$ is $-23$ and
 the class number $h_{K_{M_W}}=1$.
 Note that $K_{M_W}$ is equal to the invariant trace field of
 the Weeks manifold
 since $\fg(M_W) = \fg(M_W)^{(2)}$.
 The ring $\bZ[T]/(T^3 - T-1) \subset K_{M_W}$
 is equal to the ring of integers of $K_{M_W}$.
 (We can check it by PARI-GP, for instance.)
 Hence the Hasse-Weil zeta function of
 the Weeks manifold $M_W$ is written as follows:
$$
 \zeta(X_0(M_W),s) = \zeta(\cX_0(M_W),s) = \zeta(\Spec\bZ[T]/(T^3 - T-1), s)
 = \zeta(K_{M_W},s).
$$
%

%

 Since the holonomy representation is irreducible,
 if $\rho:\fg(M_W) \ra \SL_2(\bC)$ is a lift of
 the holonomy representation
 their images are expressed as
$$
\rho(a)= \begin{pmatrix}
 x & 1 \\ 0 & x^{-1}
 \end{pmatrix},
 \quad \quad
\rho(b)= \begin{pmatrix}
 y & 0 \\ r & y^{-1}
 \end{pmatrix}
$$
 up to conjugation
 (\Cf \cite{RileyHolom}, lemma $7$).
 Then the images $\rho(w_1)$ and $\rho(w_2)$
 are expressed by the matrices $W^1=(w^1_{i,j}(x,y,r))$
 and $W^2=(w^2_{i,j}(x,y,r))$,
 where $w^1_{i,j}(x,y,r)$ and $w^2_{i,j}(x,y,r)$
 are polynomials in $x,y,r$.
 We obtain the following solutions of these polynomials
 by Maple:
%
%
%
%
%
%
%
$$
(x,y,r) =\left\{ (t,1,0),
 (x,y,
 -xy^5 +xy^4+y^5-3xy^3-y^4+xy^2+3y^3-4xy+x+2y)\right\}.
$$
\noindent
 Here $t$ satisfies the equation $t^4 + t^3 + t^2 + t + 1 = 0$,
 $y$ is a root of
 $F(z):= z^6-z^5+3z^4-z^3+3z^2-z+1$
 and $x$ is a root of
 $T^2 + \ga T + 1$
 for $\ga := y^5 - y^4 + 3y^3 - y^2 + 2y - 1$.
 The representation defined by 
 $(x,y,r) = (t,1,0)$ is reducible.
 Thus we only need to consider the other case.

 Since $F(y) = y^6-y^5+3y^4-y^3+3y^2-y+1 = 0$,
 we have $y \ga = -y^2 - 1$.
 Hence $\ga = -(y + y^{-1})$.
 Therefore $x$ is either $y$ or $y^{-1}$.

 When $x = y$, we have
 $r = -y^6 + 2y^5 - 4y^4 + 4y^3 - 4y^2 + 3y = \ga + 2 = 2 - y - y^{-1}$.
 When $x = y^{-1}$, we have
 $r =  y^5 - 2 y^4 + 4y^3 - 3y^2 + 3y - 4 + y^{-1} = y^{-1}(y^6 - 2 y^5 + 4y^4 - 3y^3 + 3y^2 - 4y + 1) = y^{-1}(-y^5 + y^4 - 2y^3 - 3y) = y^{-1}(-\ga + y^3 - y^2 - y - 1) = y^2 +  y^{-2} - y - y^{-1}$.

 In each case
 we have $\Tr\rho([a,b]) = 2 - \left((y+y^{-1}) - 2\right)^2 \left((y+y^{-1}) + 1\right)$.
 Note that $\rho$ is reducible if and only if $\Tr\rho([a,b])=2$,
 which is equivalent to $(y+y^{-1}) = 2$ or $-1$.
 However we see from $F(y)=0$ that
 $y+y^{-1}$ is a root of $T^3 - T^2 + 1$.
 Therefore $\rho$ is irreducible.

 Note that the character $\chi$ of $\rho : \fg(M_W) \ra \SL_2(\bC)$
 is determined by $(\chi(a),\chi(b),\chi(ab))$
 since $\fg(M_W)$ is generated by two elements $a,b$.
 Now in each case we have
$$
(\chi(a),\chi(b),\chi(ab)) = (y+y^{-1},y+y^{-1},(y+y^{-1})^2 - (y+y^{-1})).
$$
 Therefore they define the same representation up to conjugacy.

 The element $y+y^{-1}$ is a root of the polynomial $T^3 - T^2 + 1$
 which defines the trace field $K_{M_W}$
 (we remark that
 we can replace $T^3 - T^2 + 1$ with $T^3 - T - 1$
 appeared in the description of $X(M_W)(\bC)_{\Irr}$
 by the change of variables).
%
%
 Hence there are two possibilities of irreducible representations
 whose traces are non-real numbers.
 Since each holonomy representation
 $\rho_M : \fg(M_W) \ra \PSL_2(\bC)$ has a unique lift,
 They are the lifts of the two holonomy representations of $M_W$ (which are not $\PSL_2(\bC)$-conjugate but complex conjugate representations).

 See \S 3.2 in \cite{ChFrJoRe}
 for more detailed explanation
 in another group presentation
 of the Weeks manifold case.

%
%

\subsection{Meyerhoff manifold case}

 The Meyerhoff manifold $M_M$ is the complete orientable
 hyperbolic $3$-manifold obtained by $(5,1)$ Dehn
 surgery on the figure 8 knot complement.
 This is a unique arithmetic closed hyperbolic 3-manifold
 up to isometry with second smallest volume
 (for the arithmeticity, see \cite{Chinburg}.
 For a proof of the second smallness of the volume,
 see \cite{ChFrJoRe}).
 Its fundamental group has the following presentation
$$
 \fg(M_M) \isom \lan a,b \;\mid\; w_1=w_2=1 \ran,
$$
 where
$$
 w_1= aBAbABabb, \quad w_2=aBAbaaaaaabAB.
$$
\noindent 
 The following three polynomials define the $\SL_2(\bC)$-character variety of $\fg(M_M)$:
$$
  x - z,
$$
$$
 y + z^6 -3z^5-2z^4+11z^3-3z^2 -8z +2,
$$
$$
 z^7 -4z^6+z^5+13z^4-13z^3-6z^2 +9z-2.
$$
\noindent 
 Then the subset $X(M_M)(\bC)_{\Irr}$ consists of
 points of the form $(\ga,1-\ga-\ga^2+\ga^3,\ga)$,
 where $\ga$ is a root of the polynomial
 $f(T)= T^4 - 3T^3 +T^2 + 3T -1$.
 Therefore the trace field $K_{M_M}$ is $\bQ[T]/(f)$.
 The ring of integers $\cO_{K_{M_M}}$ is $\bZ[T]/(f)$,
 its discriminant $d_{K_{M_M}}$ is $-283$ and
 the class number $h_{K_{M_M}}=1$.
 Note that $K_{M_M}$ is isomorphic to the invariant trace field
 of the Meyerhoff manifold $M_M$.

 We can compute an explicit description of a lift
 of the holonomy representation as well as
 the Weeks manifold case.

 Let $\rho:\fg(M_M) \ra \SL_2(\bC)$ be a lift of
 the holonomy representation
 and put the images at $a,b$ as
$$
\rho(a)= \begin{pmatrix}
 x & 1 \\ 0 & x^{-1}
 \end{pmatrix},
 \quad \quad
\rho(b)= \begin{pmatrix}
 y & 0 \\ r & y^{-1}
 \end{pmatrix}
$$
 up to conjugation.
 Then the possibilities of $(x,y,r)$ are 
 $\left\{ (t,1,0), (x,y,r)\right\}$.
%
 Here $t$ satisfies the equation $t^4 + t^3 + t^2 + t + 1 = 0$
 and $x$ is a root of
$$
F(z):= z^8 -3z^7 +5z^6 -6z^5 +7z^4 -6z^3 +5z^2 -3z +1.
$$
 If we put
$$
 \ga = 3x^7 -7x^6 +10x^5 -11x^4 +13x^3 -9x^2 +8x -4
$$
 then $y$ is a root of $y^2 + \ga y + 1=0$
 and
$$
 r = 3x^7 -6x^6 +8x^5 -8x^4 +10x^3 -5x^2 +7x -1 + (-x^7 + 3x^6 -5x^5 +6x^4 -7x^3 +6x^2 -6x +3)y.
$$
%
%
 Now $F(x) = 0$ implies that
$$
 (x+x^{-1})^4 -3(x+x^{-1})^3 +(x+x^{-1})^2 +3(x+x^{-1}) - 1 = 0.
$$
 Hence $x+x^{-1}$
 is a root of $ T^4 - 3T^3 +T^2 + 3T -1$.
 A simple computation shows that
$$
 y + y^{-1} = - \ga = (x+x^{-1})^3 - (x+x^{-1})^2 - (x+x^{-1}) + 1
$$
 and
$$
 r = (x^{-1} - x)y + x^{-1}(\ga + x^2 + 1).
$$
\noindent 
 Therefore we have
 $(\Tr\rho(a),\Tr\rho(b),\Tr\rho(ab)) = (x+x^{-1},-\ga,x+x^{-1})$.

 Note that $f(T)= T^4 - 3T^3 +T^2 + 3T -1$
 is the minimal polynomial of the trace field $K_{M_M}$,
 which is also the invariant trace field of $M_M$.
 Hence if we take one of the two complex roots of $f(T)$
 it defines a lift of the holonomy representation.
 Refer \cite{Chinburg} for more detailed discussion
 in another group presentation of $\fg(M_M)$.

 We give additional 3 examples of
 arithmetic closed 3 manifolds shortly.

\begin{example}
 Let $M=$ m010 (-1,2) in the list of SnapPea.
 This is the third smallest volume arithmetic closed orientable hyperbolic
 3-manifold.
 The fundamental group has a group presentation 
$$
 \fg(M) \isom \lan a,b \mid w_1:=aBa^3Babab, w_2:=ab^2A^2b^2aB =1 \ran.
$$
 The irreducible character variety $X(M)(\bC)_{\Irr}$ is
 the zero set of the polynomial
 $f(T):=T^4 - 2T^2 +4$.
 Thus $\bQ[T]/f(T)$ is the trace field of $M$.
 We remark that
 the trace field is not equal to the invariant trace field
 since $\fg(M)^{\ab}\isom \bZ/6\bZ\oplus\bZ/3\bZ$.
 In this case
 $\bQ[T]/(T^2 -T+1)$ is the invariant trace field.

 For a representation
 $\rho:\fg(M) \ra \SL_2(\bC)$
 such that
$$
\rho(a)= \begin{pmatrix}
 x & 1 \\ 0 & x^{-1}
 \end{pmatrix},
 \quad \quad
\rho(b)= \begin{pmatrix}
 y & 0 \\ r & y^{-1}
 \end{pmatrix}
$$
 the possibilities of $(x,y,r)$ are 
 $\left\{ (t,1,0), (x,y,r)\right\}$.
%
 Here $t$ satisfies the equation $(t^2 - t + 1)(t^2 + t + 1) = 0$.
 In the other case
 $x$ is a root of
 $F(z):= z^8 +2z^6 + 6z^4 +2z^2 +1$
 and $y$ satisfies the equation $2y^2 +\ga y + 2 = 0$
 for
 $\ga = x^6 + 2x^4 + 5x^2$.
 Note that $y + y^{-1} = -\ga/2 = (x + x^{-1})^2/2$.
 Finally $r$ is given by
\begin{eqnarray*}
 r &=& 2^{-1} (x^7 + 2x^5 + 7x^3 + 2x) + (-x^7 - 2x^5 - 6x^3 - 3x)y\\
   &=& 2^{-1} x^{-1}(x^8 + 2x^6 + 7x^4 + 2x^2) + x^{-1}y(-x^8 - 2x^6 - 6x^4 - 3x^2)\\
   &=& 2^{-1} x^{-1}(x^4 - 1) + x^{-1}y(-x^2 + 1)\\
   &=& 2^{-1} (x^3 - x^{-1}) + y(x^{-1} - x).
\end{eqnarray*}
 Hence we have
\begin{eqnarray*}
 xy + (xy)^{-1} + r &=& x^{-1}(y + y^{-1}) +  2^{-1} (x^3 - x^{-1})\\ 
                    &=&  2^{-1} (-x^5 - x^3 - 5x - x^{-1})\quad (\text{apply }y+y^{-1} = -\ga/2)\\
                    &=& 2^{-1} (x^3 + x + x^{-1} + x^{-3})\quad (\text{use }F(x)=0)\\
                    &=& 2^{-1} ( (x + x^{-1})^3 - 2 (x + x^{-1})).
\end{eqnarray*}
\noindent 
 Therefore the character of $\rho$ is determined by
$$
(\Tr\rho(a),\Tr\rho(b),\Tr\rho(ab)) = (x+x^{-1},(x + x^{-1})^2/2,( (x + x^{-1})^3 - 2 (x + x^{-1}))/2).
$$
\noindent 
 The trace field $K_M = \bQ[T]/(f(T))$ is a quartic totally imaginary field
 and each root $x+x^{-1}$ of $f(T)$ defines one of the two lifts of
 the two complex conjugate holonomy representations
 $\rho_M : \fg(M)\ra \PSL_2(\bC)$.

\end{example}

\begin{example}
 Put $M=$ m003(-4,3) in the list of SnapPea.
 This is the fourth smallest volume arithmetic closed
 hyperbolic 3-manifold.
 A group presentation of $\fg(M)$ is
$$
 \fg(M) \isom \lan a,b \mid w_1:=a^2bAb^3Ab, w_2:= aba^2B^2a^2b =1 \ran.
$$
 The irreducible character variety $X(M)(\bC)_{\Irr}$
 is defined by the polynomial
 $f(T):=T^4 - T^3 -2 T^2 +2T +1$.
 Thus $\bQ[T]/f(T)$ is the trace field of $M$,
 and it also is the invariant trace field of $M$.

 The possibilities of $(x,y,r)$ are 
 $\left\{ (t,1,0), (x,y,r)\right\}$.
%
 Here $t$ satisfies the equation $t^4 + t^3 + t^2 + t + 1 = 0$.
 In the other case
 $y$ is a root of
 $F(z):= z^8 - z^7 + 2z^6 - z^5 + 3z^4 - z^3 +2 z^2 - z +1$
 and $x$ satisfies the equation $x^2 +\ga x + 1 = 0$
 for
$$
\ga = - y^5 + y^4 - y^3 - y = (y + y^{-1})^3 - (y + y^{-1})^2 - (y + y^{-1}) + 1.
$$
\noindent 
 Finally $r$ is written as
\begin{eqnarray*}
 r &=& x(-y^7 + y^6 - 2y^5 + y^4 - 3y^3 + y^2 - 3y + 1) + (-y^7 + y^6 - 2y^5 - 2y^3 - y)\\
   &=& x(-y + y^{-1}) + (-y^4 + y^3 - y^2 + y - 1 + y^{-1}).
\end{eqnarray*}
 Hence we have
\begin{eqnarray*}
 xy + (xy)^{-1} + r &=& (x + x^{-1})y^{-1} + (-y^4 + y^3 - y^2 + y - 1 + y^{-1})\\
                    &=& y + y^{-1}. \quad (\text{apply }x + x^{-1} = - \ga)
\end{eqnarray*}
\noindent 
 Therefore the lifts of the two holonomy representations $\rho$ are determined by
 $$
\left(\Tr\rho(a),\Tr\rho(b),\Tr\rho(ab)\right) =
 (-(y + y^{-1})^3 + (y + y^{-1})^2 + (y + y^{-1}) - 1,y + y^{-1},y + y^{-1})
$$
 for two complex roots $y + y^{-1}$ of
 $f(T)=T^4 - T^3 -2 T^2 +2T +1$.

\end{example}

\begin{example}
 Put $M=$ m003 (-3,4) in the list of SnapPea.
 It is the seventh smallest volume arithmetic closed
 hyperbolic 3-manifold.
$$
 \fg(M) \isom \lan a,b \mid w_1:=ab^3abA^2b, w_2:=abABAbabABa^2b^2a^2BAb =1 \ran.
$$
 The irreducible $\SL_2(\bC)$-character variety $X(M)(\bC)_{\Irr}$
 is defined by
 $f(T):=T^6 - T^2 -1$.
 Thus $\bQ[T]/(f(T))$ is the trace field of $M$,
 and $\bQ[T]/(T^3 -T^2 + 1)$ is the invariant trace field.
 This is equal to the invariant trace field of the
 Weeks manifold.

 If
 $\rho:\fg(M) \ra \SL_2(\bC)$ is a representation
 such that
$$
\rho(a)= \begin{pmatrix}
 x & 1 \\ 0 & x^{-1}
 \end{pmatrix},
 \quad \quad
\rho(b)= \begin{pmatrix}
 y & 0 \\ r & y^{-1}
 \end{pmatrix}
$$
 then $(x,y,r)$ is determined as follows:
 $x$ is a root of
 $F(z):= z^{12} + 6 z^{10} + 14 z^8 + 17 z^6 + 14 z^4 + 6 z^2 +1$
 and $y$ satisfies the equation $y^2 +\ga y + 1 = 0$
 for
 $\ga = - x^{10} - 6 x^8 - 14 x^6 - 17 x^4 - 13 x^2 - 5 = (x + x^{-1})^2 - 1$.
 Finally $r$ is written as
\begin{eqnarray*}
 r &=& y(-x^{11} - 6 x^9 - 14 x^7 - 17 x^5 - 14 x^3 - 7x)
 + (6 x^{11} + 34 x^9 + 73 x^7 + 79 x^5 + 59 x^3 + 17 x)\\
   &=& y(-x + x^{-1}) + (6 x^{11} + 34 x^9 + 73 x^7 + 79 x^5 + 59 x^3 + 17 x).
\end{eqnarray*}
 Hence we have
\begin{eqnarray*}
 xy + (xy)^{-1} + r &=& (y + y^{-1})x^{-1} + (6 x^{11} + 34 x^9 + 73 x^7 + 79 x^5 + 59 x^3 + 17 x)\\
                    &=& 6 x^{11} + 35 x^9 + 79 x^7 + 93 x^5 + 76 x^3 + 30 x + 5 x^{-1} \quad (\text{apply } y + y^{-1} = - \ga)\\
                    &=& -(x^5 + x^{-5} + 5 (x^3 + x^{-3}) + 9 (x +  x^{-1})) \quad (\text{use }F(x) = 0 \text{  repeatedly})\\
                    &=& -(x + x^{-1})^5 + (x +  x^{-1}).
\end{eqnarray*}

 Since $f(T) = T^6 - T^2 - 1$ has two real roots and four complex roots,
 all the four lifts of the two holonomy representations $\rho$ are determined by
 $$
\left(\Tr\rho(a),\Tr\rho(b),\Tr\rho(ab)\right) =
 (x + x^{-1}, 1 - (x + x^{-1})^2, -(x + x^{-1})^5 + (x +  x^{-1}))
$$
 for four complex roots $x + x^{-1}$ of $f(T)$.
\end{example}

%
%
$$
\begin{tabular}{l|c|r}
\hline 
 $M$ & defining polynomial $f$ of $X(M)_{\Irr}(\bC)$  \\ \hline \hline
Weeks & $T^3 - T - 1$  \\
Meyerhoff & $ T^4 - 3T^3 +T^2 + 3T -1$  \\
m010 (-1,2) & $T^4 - 2T^2 +4$  \\
m003 (-4,3)  & $T^4-T^3-2T^2+2T+1$  \\
m004 (6,1)  & $T^6 -7T^4+14T^2 -4$  \\
m003 (-3,4) & $T^6 - T^2 -1$  \\
\hline
\end{tabular} 
$$
%
%
%
%

%
%
%
%
%
%

%
%

\providecommand{\bysame}{\leavevmode\hbox to3em{\hrulefill}\thinspace}
\providecommand{\MR}{\relax\ifhmode\unskip\space\fi MR }
\providecommand{\MRhref}[2]{%
  \href{http://www.ams.org/mathscinet-getitem?mr=#1}{#2}
}
\providecommand{\href}[2]{#2}


\vspace{0.5cm}

\noindent
Current address:\\
 Shinya Harada\\
 Department of Mathematics, School of Engineering, Tokyo Denki University,\\
 5 Senju Asahi-cho, Adachi-ku, Tokyo 120-8551, Japan\\
{\tt harada@mail.dendai.ac.jp}

\end{document}